\numberwithin{equation}{section}
\theoremstyle{plain}
\newtheorem{theorem}[subsubsection]{Theorem}
\newtheorem*{theorem*}{Theorem}
\newtheorem*{claim}{Claim}
\newtheorem{proposition}[subsubsection]{Proposition}
\newtheorem{lemma}[subsubsection]{Lemma}
\newtheorem{assumption}[subsubsection]{Assumption}
\newtheorem{conjecture}[subsubsection]{Conjecture}
\theoremstyle{definition}
\newtheorem{definition}[subsubsection]{Definition}
\theoremstyle{remark}
\newtheorem{remark}[subsubsection]{Remark}
\newtheorem{remarks}[subsubsection]{Remarks}
\newtheorem{example}[subsubsection]{Example}
\renewcommand{\descriptionlabel}[1]%
         {\hspace{\labelsep}\normalfont{#1}}
\newcommand{\bb}{\mathfrak{b}}
\newcommand{\gf}{\mathfrak{g}}
\newcommand{\hh}{\mathfrak{h}}
\newcommand{\kk}{\mathfrak{k}}
\renewcommand{\ss}{\mathfrak{s}}
\newcommand{\zz}{\mathfrak{z}}
\newcommand{\Ak}{\mathbb{A}_k}
\newcommand{\Ad}{\Ak}
\newcommand{\CC}{\mathbb{C}}
\newcommand{\FF}{\mathbb{F}}
\newcommand{\QQ}{\mathbb{Q}}
\newcommand{\RR}{\mathbb{R}}
\newcommand{\ZZ}{\mathbb{Z}}
\newcommand{\diag}{{\operatorname{diag}}}
\newcommand{\Hom}{\operatorname{Hom}}
\newcommand{\End}{\operatorname{End}}
\newcommand{\Aut}{{\operatorname{Aut}}}
\newcommand{\varchi}{\mathcal{X}}
\newcommand{\Gm}{\mathbb{G}_m}
\newcommand{\Ga}{\mathbb{G}_a}
\newcommand{\GL}{\operatorname{GL}}
\newcommand{\Mat}{\operatorname{Mat}}
\newcommand{\PGL}{\operatorname{PGL}}
\newcommand{\SL}{\operatorname{SL}}
\newcommand{\GSp}{{\operatorname{GSp}}}
\newcommand{\Sp}{{\operatorname{Sp}}}
\newcommand{\Gal}{\operatorname{Gal}}
\newcommand{\Fr}{{\operatorname{Fr}}}
\newcommand{\tr}{\operatorname{tr}}
\newcommand{\aff}{{\operatorname{aff}}}
\newcommand{\cusp}{{\operatorname{cusp}}}
\newcommand{\spec}{{\operatorname{spec}\,}}
\newcommand{\Eis}{{\operatorname{Eis}}}
\newcommand{\Bun}{{\operatorname{Bun}}}
\newcommand{\pos}{{\operatorname{pos}}}
\newcommand{\Rep}{{\operatorname{Rep}}}
\newcommand{\ab}{{\operatorname{ab}}}
\newcommand{\RTF}{{\operatorname{RTF}}}
\newcommand{\Res}{{\operatorname{Res}}}
\newcommand{\Sat}{{\operatorname{Sat}}}
\def\blfootnote{\xdef\@thefnmark{}\@footnotetext} 
\begin{document}

\date{}
\title{Spherical varieties and integral representations \\ of $L$-functions. }
\author{Yiannis Sakellaridis }


\maketitle 

\begin{abstract}
We present a conceptual and uniform interpretation of the methods of integral representations of $L$-functions (period integrals, Rankin-Selberg integrals). This leads to: (i) a way to classify such integrals, based on the classification of certain embeddings of spherical varieties (whenever the latter is available), (ii) a conjecture which would imply a vast generalization of the method, and (iii) an explanation of the phenomenon of ``weight factors'' in a relative trace formula. We also prove results of independent interest, such as the generalized Cartan decomposition for spherical varieties of split groups over $p$-adic fields (following an argument of Gaitsgory and Nadler).
\end{abstract}

\blfootnote{\thanks{\emph{2000 Mathematics Subject Classification:} 11F67 (Primary); 22E55, 11F70 (Secondary). \\ \indent \emph{Keywords:} $L$-functions, spherical varieties, Rankin-Selberg method, periods.}}

\setcounter{tocdepth}{2}
\tableofcontents

\section{Introduction}

\subsection{Goals} The study of automorphic $L$-functions (and their special values at distinguished points, or \emph{$L$-values}) is very central in many areas of present-day number theory, and an incredible variety of methods has been developed in order to understand the properties of these mysterious objects and their deep links with seemingly unrelated arithmetic invariants. Oddly enough, notwithstanding their elegant and very general definition by Langlands in terms of Euler products, virtually all methods for studying them depart from an integral construction of the form: \begin{quote}A suitable automorphic form (considered as a function on the automorphic quotient $[G]:=G(k)\backslash G(\Ad)$), integrated against a suitable distribution on $G(k)\backslash G(\Ad)$, is equal to a certain $L$-value.\end{quote} For ``geometric'' automorphic forms, such an integral can often be expressed as a a pairing between elements in certain homology and cohomology groups, but the essence remains the same. Given the 
importance of such methods, it appears as a paradox that there is no general theory of integral representations of $L$-functions and, in fact, they are often considered as ``accidents''.

In this article I present a uniform interpretation of a large array of such methods, which includes Tate integrals, period integrals and Rankin-Selberg integrals. This interpretation leads to the first systematic classification of such integrals, based on the classification of certain spherical varieties (see sections \ref{secrs} and \ref{secsmoothaffine}). Moreover, it naturally gives rise to a very general conjecture (Conjecture \ref{mainconjecture}), whose proof would lead to a vast extension of the method and would allow us to study many more $L$-functions than are within our reach at this moment. Finally, it explains phenomena which have been observed in the theory of the relative trace formula, in a way that is well-suited to the geometric methods employed in the proof of the fundamental lemma by Ng\^o \cite{Ngo}. In the course of the article we also prove some results which can be of independent interest, including results on the orbits of hyperspecial and congruence subgroups on the $p$-adic points 
of a spherical variety (Theorems \ref{stratification} and \ref{Iwahoritheorem}).

The main idea is based on the well-known principle that a ``multiplicity-freeness'' property usually underlies integral constructions of $L$-functions. For our present purposes, a ``multiplicity-freeness'' property can be taken to mean that a suitable space of functions $\mathcal S(X)$ on a $G(\Ad)$-space $X$ admits at most one, up to constants, morphism into any irreducible admissible representation $\pi$ of $G(\Ad)$. Here $G$ denotes a connected reductive algebraic group over a global field $k$, and $\Ad$ denotes the ring of adeles of $k$. Such spaces arise as the adelic points of spherical varieties. By definition, a spherical variety for $G$ is a normal variety with a $G$-action such that, over the algebraic closure, the Borel subgroup of $G$ has a dense orbit. Let $X$ be an \emph{affine} spherical variety, and denote by $X^+$ the open $G$-orbit on $X$. A second principle behind the main idea is based on ideas around the geometric Langlands program, according to which the correct ``Schwartz space'' $\
mathcal S(X)$ of functions to consider (which are actually functions on $X^+(\Ad)$, not $X(\Ad)$) should be one reflecting the geometry and singularities of $X$. Then, for every cuspidal automorphic representation $\pi$ of $G$ with ``sufficiently positive'' central character, there is a natural pairing $\mathcal P_X: \mathcal S(X(\Ad)) \otimes \pi\to \CC$ . The weak version of our conjecture (\ref{weakconjecture}) asserts that this pairing admits meromorphic continuation to all $\pi$. (A stronger version, \ref{mainconjecture}, states that an ``Eisenstein series'' construction, obtained by summing over the $k$-points of $X$ and integrating against characters of a certain torus acting on $X$, has meromorphic continuation.) Then, assuming the ``multiplicity-freeness'' property, one expects the pairing to be associated to some $L$-value of $\pi$.

If our variety is of the form $H\backslash G$ with $H$ a reductive subgroup of $G$ then from this construction we recover the period integral of automorphic forms over $H(k)\backslash H(\Ad)$ (\S \ref{ssperiods}). More generally, if $X$ is fibered over such a variety and the fibers are (related to) flag varieties, then we can prove meromorphic continuation using the meromorphic continuation of Eisenstein series, and we recover integrals of ``Rankin-Selberg'' type (\S \ref{ssRS}). Thus, we reduce the problem of finding Rankin-Selberg integrals to the problem of classifying affine spherical varieties with a certain geometry. For smooth affine spherical varieties, this geometric problem has been solved by Knop and Van Steirteghem \cite{KnVS}. By inspection of their tables (section \ref{secsmoothaffine}), we recover some of the best-known constructions, such as those of Rankin and Selberg \cite{Ra,Sel}, Godement and Jacquet \cite{GJ}, Bump and Friedberg \cite{BF}, all spherical period integrals, as well as some 
new ones.

We give an example (\S \ref{sstensor}), involving the tensor product $L$-function of $n$ cuspidal representations on $\GL_2$, to support the point of view that the basic object giving rise to an Eulerian integral related to an $L$-function is the spherical variety $X$ and not a geometry related to flag varieties. Finally, we apply these ideas to the relative trace formula (section \ref{secRTF}) to show that certain ``weight factors'' which have appeared in examples of this theory and are often considered an ``anomaly'' can, in fact, be understood using the notion of Schwartz spaces.

\subsection{Background on the methods}
To an automorphic representation $\pi\simeq \otimes_v' \pi_v$ of a reductive group $G$ over a global field $k$, and to an algebraic representation $\rho$ of its Langlands dual group $^L G$, Langlands attached a complex $L$-function $L(\pi,\rho,s)$, defined for $s$ in some right-half plane of the complex plane as the product, over all places $v$, of local factors $L_v(\pi_v,\rho,s)$.\footnote{At ramified places and for most $\rho$, the definition still depends on the local functoriality conjectures.} 

Despite the beauty of its generality, the definition is of little use when attempting to prove analytic properties of $L$-functions, such as their meromorphic continuation and functional equation. Such properties are usually obtained by integration techniques, namely presenting the $L$-function as some integral transform of an element in the space of the given automorphic representation. Such methods in fact predate Langlands by more than a century, but the most definitive construction (as every automorphic $L$-function should be a $\GL_n$ $L$-function) was studied by Godement and Jacquet \cite{GJ} (generalizing Tate's construction for $\GL_1$, \cite{Tate}), 
who proved the analytic continuation and functional equation of $L(\pi,s):=L(\pi,\operatorname{std},s)$, where $\pi$ is an automorphic representation of $G=\GL_n$ and $\operatorname{std}$ is the standard representation of $^L G=\GL_n(\CC)\times \Gal(\bar k/k)$.
 Their method relies on proving the equality:
\begin{equation}\label{GodementJacquet}
  L(\pi,s-\frac{1}{2}(n-1))= \int_{\GL_n(\Ak)} \left<\pi(g)\phi, \tilde\phi\right> \Phi(g) |\det(g)|^s dg
\end{equation}
where $\phi$ is a suitable vector in $\pi$, $\tilde\phi$ a suitable vector in its contragredient and $\Phi$ a suitable function in $\mathcal S(\Mat_n(\Ak))$, the Schwartz space of functions on $\Mat_n(\Ak)$. The main analytic properties of $ L(\pi,\rho,s)$, then, follow from Fourier transform on the Schwartz space and the Poisson summation formula.

Going several decades back in history, Hecke showed that the standard $L$-function of a cuspidal automorphic representation on $\GL_2$ (with, say, trivial central character) has a presentation as a \emph{period integral}, which in adelic language reads:
\begin{equation}\label{hecke} L(\pi,s+\frac{1}{2}) = \int_{k^\times \backslash \Ak^\times} \phi\left(\left(\begin{array}{cc} a & 0 \\ 0 & 1\end{array}\right)\right) |a|^s da \end{equation}
where, again, $\phi$ is a suitable vector in the automorphic representation under consideration. 

Period integrals (by which we mean integrals over the orbit of some subgroup on the automorphic space $G(k)\backslash G(\Ad)$, possibly against a character of that subgroup) have since been studied extensively, although there are still many open conjectures about their relation to $L$-functions (cf., for instance, \cite{II}). Still, they form perhaps the single class of examples where we have a general principle answering the question: How to write down an integral with good analytic properties, which is related to some $L$-function (or $L$-value)? Piatetski-Shapiro discussed this in \cite{PSEuler}, and suggested that the period integral of a cusp form on a group $G$ over a subgroup $H$ (against, perhaps, an analytic family $\delta_s$ of characters of $H$ as in (\ref{hecke})) should always be related to some $L$-value if the subgroup $H$ enjoys a ``multiplicity-one'' property: $\dim \Hom_{H(\Ad)}(\pi, \delta_s)\le 1$ for every irreducible representation $\pi$ of $G(\Ad)$ and (almost) every $s$.

The method of periods usually fails when the subgroup $H$ is non-reductive, the reason being that, typically, the group $H(\Ad)$ has no closed orbits on $G(k)\backslash G(\Ad)$. Therefore there is no a priori reason that the period integral should have nice analytic properties (as the character $\delta_s$ varies), and one can in fact check in examples (see, for instance, \ref{badexample}) that for values of $s$ such that the period integral converges, it does not represent an $L$-function.

In a different vein, Rankin \cite{Ra} and Selberg \cite{Sel} independently discovered an integral representing the tensor product $L$-function of two cuspidal automorphic representations of $\GL_2$. The integral uses as auxilliary data an Eisenstein series on $\GL_2$ and has the following form:
$$ L(\pi_1\times\pi_2, \otimes, s)= \int_{\PGL_2(k)\backslash\PGL_2(\Ak)} \phi_1(g)\phi_2(g) E(g,s) dg $$
with suitable $\phi_1\in\pi_1, \phi_2\in\pi_2$. 

Later, this method was taken up by Jacquet, Piatetski-Shapiro, Shalika, Rallis, Gelbart, Ginzburg, Bump, Friedberg and many others, in order to construct numerous examples of automorphic $L$-functions expressed as integrals of cusp forms against Eisenstein series, with important corollaries for every such expression discovered. Despite the abundance of examples, however, there has not been a systematic understanding of how to produce an integral representing an $L$-function. 

\subsection{Schwartz spaces and $X$-Eisenstein series} While the method of Godement and Jacquet can also be phrased in the language of Rankin-Selberg integrals (see \cite{GPSR}), the fact that no systematic theory of these constructions exists has led many authors to consider them as coincidental and/or to seek direct generalizations of \cite{GJ}, as being a ``more canonical'' construction (cf.\ \cite{BKgamma}). We adopt a different point of view which treats Godement-Jacquet, Rankin-Selberg, and period integrals as parts of the same concept, in fact a concept which should be much more general!

The basic object here is an affine spherical variety $X$ of the group $G$. The reason that such varieties are suitable is that they are related to the ``multiplicity-free'' property discussed above. For instance, in the category of \emph{algebraic} representations, the ring of regular functions $k[X]$ of an affine $G$-variety is multiplicity-free if and only if the variety is spherical. In the $p$-adic setting and for unramified representations, questions of multiplicity were systematically examined in \cite{SaSpc,SaSph}, and of course in special cases such questions have been examined in much greater detail (see, for example, \cite{Pr}).

The main idea is to associate to every affine spherical variety a space of distributions on $G(k)\backslash G(\Ad)$ which should have ``good analytic properties''. For reasons of convenience we set up our formulations in such a way that the analytic problem does not have to do with varying a character of some subgroup $H$ (the isotropy subgroup of a ``generic'' point on $X$), but with varying a cuspidal automorphic representation of $G$. For instance, to the Hecke integral (for $\PGL_2$) we do not associate the variety $\Gm\backslash \PGL_2$, but the variety $X=\PGL_2$ under the $G=\Gm\times \PGL_2$-action. Our distributions (in fact, smooth functions) on $G(k)\backslash G(\Ad)$ come from a ``Schwartz space'' of functions on $X^+(\Ad)$ via a ``theta series'' construction (i.e.\ summation over $k$-points of $X^+$). Here $X^+$ denotes the open $G$-orbit on $X$. The main conjecture \ref{mainconjecture}, then, states that the integral of these ``$X$-theta series'' against central idele class characters (I call 
this integral an $X$-Eisenstein series), originally defined in some domain of convergence, has meromorphic continuation everywhere. Under additional assumptions on $X$ (related to the ``multiplicity-freeness'' property mentioned above), the pairings of $X$-theta series with automorphic forms should be related, in a suitable sense, to automorphic $L$-functions or special values of those.

The geometric Langlands program provides ideas that allow us to speculate on the form of these Schwartz spaces, motivated also by the work of Braverman and Kazhdan \cite{BK, BK2} on the special case that $X$ is the affine closure of $[P,P]\backslash G$, where $P$ is a parabolic subgroup. Let us discuss this work: The prototype here is the case $X^+=U\backslash \SL_2 = \mathbb A^2\smallsetminus\{0\}$ (where $U$ denotes a maximal unipotent subgroup), $X=\mathbb A^2$ (two-dimensional affine space). The Schwartz space is the usual Schwartz space on $X(\Ad)$ which, by definition, is the restricted tensor product $\mathcal S(X(\Ad)):=\otimes_v' (\mathcal S(k_v^2) : \Phi_v^0)$, where for finite places $k_v$ with rings of integers $\mathfrak o_v$ the ``basic vectors'' $\Phi_v^0$ are the characteristic functions of $X(\mathfrak o_v)=\mathfrak o_v^2$. There is a natural meromorphic family of morphisms: $\mathcal S(X(\Ad))\to I_{B(\Ad)}^{G(\Ad)}(\chi)$ (where $I_P^G$ denotes normalized parabolic induction from the 
parabolic $P$, $B$ denotes the Borel subgroup), and for idele class characters $\chi$ the composition with the Eisenstein series morphism: $\Eis_\chi: I_{B(\Ad)}^{G(\Ad)}(\chi) \to C^\infty(G(k)\backslash G(\Ad))$ provides meromorphic sections of Eisenstein series, whose functional equation can be deduced from the Poisson summation formula on $\Ad^2$ -- in particular, the $L$-factors which appear in the functional equation of ``usual'' (or ``constant'') sections are absent here.

This was found to be the case more generally in \cite{BK, BG, BFGM, BK2}: One can construct ``normalized'' sections of Eisenstein series from certain ``Schwartz spaces'' of functions on $[P,P]\backslash G (\Ad)$ (or $U_P\backslash G(\Ad)$, where $U_P$ is the unipotent radical of $P$). These Schwartz spaces should be defined as tensor products over all places, restricted with respect to some ``basic vector''; and the ``basic vector'' should be the function-theoretic analog of the intersection cohomology sheaf of some geometric model for the space $X(\mathfrak o_v)$.  For instance, if $X$ is \emph{smooth} then the intersection cohomology sheaf is constant, which means that  $\Phi_v^0$ is the characteristic function of $X(\mathfrak o_v)$; this explains the distibutions in Tate's thesis, the work of Godement and Jacquet, and the case of period integrals. (In the latter, the characteristic function of $X(\mathfrak o_v)=H\backslash G(\mathfrak o_v)$ is obtained as the ``smoothening'' of the delta function at the 
point $H1\in X$.)

Such geometric models where recently defined by Gaitsgory and Nadler \cite{GN} for \emph{every} affine spherical variety. They provide us with the data necessary to speculate on a generalization of the Rankin-Selberg method. It should be noted, however, that even to define the ``correct'' functions on $X^+(\Ad)$ out of these geometric models one has to rely on certain natural conjectures on them -- therefore the problem of finding an independent or unconditional definition should be considered as part of the steps which need to be taken towards establishing our conjecture.

\subsection{Comments and acknowledgements} Most of the ingredients in the present work are not new. Experts in the Rankin-Selberg method will recognize in our method, to a lesser of greater extent, the heuristics they have been using to find new integrals. The idea that geometric models and intersection cohomology should give rise to the ``correct'' space of functions on the $p$-adic points of a variety comes straight out of the Geometric Langlands program and the work of Braverman and Kazhdan; I have nothing to offer in this direction.

However, the mixture of these ingredients is new and I think that there is enough evidence that it is the correct one. For the first time, a precise criterion is formulated on how to construct a ``Rankin-Selberg'' integral, reducing the problem to a purely geometric one -- classifying certain embeddings of spherical varieties. And evidence shows that there should be a vast generalization which does not depend on such embeddings. I prove no ``hard'' theorems and, in particular, I do not know how to establish the meromorphic continuation of the $X$-Eisenstein series. Hence, I do not know whether I am putting the cart before the horse -- however, as opposed to other conjectures which have appeared in the literature in the past, the distributions defined here are completely geometric and have nothing to do a priori with $L$-functions, which leaves a lot of room for hope. Finally, this point of view proves useful in explaining the phenomenon of ``weight factors'' in the relative trace formula.

This work started in the fall of 2004 during a semester at New York University and was put aside for most of the time since. I am very grateful to Joseph Bernstein, Daniel Bump, Dennis Gaitsgory, David Ginzburg, Herv\'e Jacquet, David Nadler and Akshay Venkatesh for many useful discussions and encouragement. I also thank a referees for many useful comments.


\section{Elements of the theory of spherical varieties}

\subsection{Invariants associated to spherical varieties}  \label{ssinvariants}

A \emph{spherical variety} for a connected reductive group $G$ over a field $k$ is a normal variety $X$ together with a $G$-action, such that over the algebraic closure the Borel subgroup of $G$ has a dense orbit. 

We denote throughout by $k$ a number field and, unless otherwise stated, we make the following assumptions on $G$ and $X$:

\begin{itemize}
 \item $G$ is a split, connected, reductive group,
 \item $X$ is affine.
\end{itemize}

The open $G$-orbit in $X$ will be denoted by $X^+$, and the open $B$-orbit by $\mathring X^+$ (where $B$ is a fixed Borel subgroup of $G$, whose unipotent radical we denote by $U$).\footnote{Notice that this is different from that of \cite{GN}, but compatible with the notation used in \cite{SaSpc,SaSph,SV}.}

The assumption that $G$ is split is certainly very restrictive, but it is enough to demonstrate our point of view, and convenient because of many geometric and representation-theoretic results which have been established in this case. We will discuss \emph{affine} spherical varieties in more detail later, but we just mention here that a common source of examples is when $X^+= H\backslash G$, a quasi-affine homogeneous variety, and $X=\overline{H\backslash G}^\aff = \spec\,\, k[H\backslash G]$, the \emph{affine closure} of $H\backslash G$, cf.\ \S\ref{ssaffine}.

We will be using standard and self-explanatory notation for varieties and algebraic groups; e.g.\ $\mathcal N(H), \mathcal Z(H), H^0$ will be, respectively, the normalizer, center and connected component of a (sub)group $H$, $\bar Y$ will be the closure of a subvariety $Y$, etc. The isotropy group of a point $x$ under a $G$-action will be denoted by $G_x$ and the fiber over $y\in Y$ of a morphism $X\to Y$ by $X_y$. The base change of an $S$-scheme $Y$ with respect to a morphism $T\to S$ will be denoted by $Y_T$, but if $v$ denotes a completion of a number field $k$ and $Y$ is defined over $k$ then we will be denoting by $Y_v$ the \emph{set} $Y(k_v)$.

Let us discuss certain invariants associated to a spherical variety. First of all, for any algebraic group $\Gamma$ we denote by $\varchi(\Gamma)$ its character group, and for any variety $Y$ with an action of $\Gamma$ we denote by $\varchi_\Gamma(Y)$ the group of $\Gamma$-eigencharacters appearing in the action of $\Gamma$ on $k(Y)$. 
If $\Gamma$ is our fixed Borel subgroup $B$, then we will denote $\varchi_B(Y)$ simply by $\varchi(Y)$. The multiplicative group of non-zero eigenfunctions (semiinvariants) for $B$ on $k(Y)$ will be denoted by $k(Y)^{(B)}$.  If $Y$ has a dense $B$-orbit, then we have a short exact sequence: $0\to k^\times \to k(Y)^{(B)} \to \varchi(Y) \to 0$. 

For a finitely generated $\ZZ$-module $M$ we denote by $M^*$ the dual module $\Hom_\ZZ(M,\ZZ)$.
For our spherical variety $X$, we let $\Lambda_X=\varchi(X)^*$ and  $\mathcal Q = \Lambda_X\otimes_\ZZ \QQ$. A $B$-invariant valuation on $k(X)$ which is trivial on $k^\times$ induces by restriction to $k(X)^{(B)}$ an element of $\Lambda_X$. We let $\mathcal V\subset \mathcal Q$ be the cone\footnote{A \emph{cone} in a $\QQ$-vector space is a subset which is closed under addition and under multiplication by $\QQ_{\ge 0}$, its \emph{relative interior} is its interior in the vector subspace that it spans, and a \emph{face} of it is the zero set, in the cone, of a linear functional which is non-negative on the cone -- hence, the whole cone is a face as well.} generated by \emph{$G$-invariant valuations} which are trivial on $k^\times$, cf.\ \cite[Corollary 1.8]{KnLV}. It is known that it is a polyhedral cone, and in fact that it is a fundamental domain for the action of a finite reflection group $W_X$ on $\mathcal Q$. We denote by $\Lambda_X^+$ the intersection $\Lambda_X\cap \mathcal V$. Under the quotient map 
$\varchi(A)^*\otimes \QQ\to \mathcal Q$, $\mathcal V$ contains the image of the \emph{negative} Weyl chamber of $G$ \cite[Corollary 5.3]{KnLV}.

The \emph{associated parabolic} to $X$ is the standard parabolic $P(X):= \{ p\in G | \mathring X^+ \cdot p = \mathring X^+\}$. Make once and for all a choice of a point $x_0\in \mathring X^+(k)$ and let $H$ denote its stabilizer; hence  $X^+=H\backslash G$, and $HB$ is open in $G$. There is the following ``good'' way of choosing a Levi subgroup $L(X)$ of $P(X)$: Pick $f\in k[X]$, considered by restriction as an element of $k[G]^H$, such that the set-theoretic zero locus of $f$ is $X \smallsetminus \mathring X^+$. Its differential $df$ at $1\in G$ defines an element in the coadjoint representation of $G$, and the centralizer $L(X)$ of $df$ is a Levi subgroup of $P(X)$. We fix throughout a maximal torus $A$ in $B \cap L(X)$. We define $A_X$ to be the torus: $L(X)/(L(X)\cap H) = A/ (A\cap H)$; its cocharacter group is $\Lambda_X$. We consider $A_X$ as a subvariety of $\mathring X^+$ via the orbit map on $x_0$.

The finite reflection group $W_X\subset \End(\mathcal Q)$ for which $\mathcal V$ is a fundamental domain is called the \emph{little Weyl group} of $X$. The set of simple roots of $G$ corresponding to $B$ and the maximal torus $A\subset B$ will be denoted by $\Delta$. Consider the (strictly convex) cone negative-dual to $\mathcal V$, i.e.\ the set $\{\chi\in\varchi(X)\otimes \QQ | \left<\chi,v\right>\le 0 \text{ for every } v \in \mathcal V\}$. The generators of the intersections of its extremal rays with $\varchi(X)$ are called the (simple) \emph{spherical roots}\footnote{The work of Gaitsgory-Nadler \cite{GN} and Sakellaridis-Venkatesh \cite{SV} suggests that for representation-theoretic reasons one should slightly modify this definition of spherical roots. However, the lines on which the modified roots lie are still the same, and for the purposes of the present article this is enough.} of $X$ and their set is denoted by $\Delta_X$. They are known to form the set of simple roots of a based root system with 
Weyl group $W_X$. We will denote by $\Delta(X)$ the subset of $\Delta$ consisting of simple roots in $L(X)$, and by $W_{L(X)}\subset W$ the Weyl groups of $L(X)$, resp.\ $G$. There is a canonical way \cite[Theorem 6.5]{KnHC} to identify $W_X$ with a subgroup of $W$, which normalizes and intersects trivially the Weyl group $W_{L(X)}$ of $L(X)$. The data $\varchi(X), W_X, \mathcal V$ are usually easy to compute by finding a point on the open $B$-orbit and using Knop's action of the Borel subgroup on the set of $B$-orbits \cite{KnOrbits}; for a more systematic treatment, see \cite{Lo}.

If $\mathcal V$ is equal to the image of the negative Weyl chamber, then we say that the variety is a \emph{wavefront} spherical variety. (This term is justified by the proof for asymptotics of generalized matrix coefficients in \cite{SV}.) Symmetric varieties, for example, are all wavefront \cite[\S 5]{KnLV}. Also, motivated by the results of \cite{SaSpc}, we will call \emph{geometric multiplicity} of $X$ the cardinality of the generic non-empty fiber of the map: $\varchi(X)/W_X \to \varchi(A)/W$. While none implies the other, it is usually the case that varieties with geometric multiplicity one are wavefront. On the other hand, let us call \emph{arithmetic multiplicity} of $X$ the torsion subgroup of $\varchi(A)/\varchi(X)$. It was shown in \cite{SaSpc} that, if $F$ is a local non-archimedean field then for an irreducible unramified representation $\pi$ of $G(F)$ which is in general position among $X$-distinguished ones (i.e.\ with $\Hom_G(\pi,C^\infty(X(F)))\ne 0$) we have $\dim\Hom_G(\pi,C^\infty(X(F)))
=1$ if and only if both the geometric and arithmetic multiplicity of $X$ are $1$.

The $G$-automorphism group of a homogeneous $G$-variety $X^+=H\backslash G$ is equal to the quotient $\mathcal N(H)/H$. It is known \cite[Lemma 7.17]{Lo} that for $X^+$ spherical the $G$-automorphisms of $X^+$ extend to any affine completion $X$ of $X^+$. Moreover, it is known that $\Aut^G(X)$ is diagonalizable; the cocharacter group of its connected component can be canonically identified (by considering the scalars by which an automorphism acts on rational $B$-eigenfunctions) with $\Lambda_X \cap\mathcal V\cap (-\mathcal V)$. We will be denoting: $\mathcal Z(X):=(\Aut^G(X))^0$. It will be convenient many times to replace the group $G$ by a central extension thereof and then divide by the subgroup of $\mathcal Z(G)^0$ that acts trivially on $X$, so that the map $\mathcal Z(G)^0\to \mathcal Z(X)$ becomes an isomorphism.

\subsection{Spherical embeddings and affine spherical varieties}\label{ssaffine}

We will use the words ``embedding'', ``completion'' or ``compactification'' of a spherical $G$-variety $X$ for a spherical $G$-variety $\bar X$ (not necessarily complete) with an open equivariant embedding: $X\to\bar X$. A spherical embedding is called \emph{simple} if it contains a unique closed $G$-orbit. Spherical embeddings have been classified by Luna and Vust \cite{LV}; our basic reference for this theory will be \cite{KnLV}. We will now recall the main theorem classifying simple spherical embeddings.

For now we assume that $k$ is an algebraically closed field in characteristic zero. However, for Theorem \ref{classemb} below the assumption on the characteristic is unnecessary, and any result that does not involve ``colors'' holds verbatim without the assumption of algebraic closedness when the group $G$ is split. Let $X$ be a spherical variety and let $X^+$ be its open $G$-orbit. The \emph{colors} of $X$ are the closures of the $B$-stable prime divisors of $X^+$; their set will be denoted by $\mathcal D$. For every $B$-stable divisor $D$ in any completion $X$ of $X^+$ we denote by $\rho(D)$ the element of $\mathcal Q$ induced by the valuation defined by $D$. A \emph{strictly convex colored cone} is a pair $(\mathcal C,\mathcal F)$ with $\mathcal C\subset \mathcal Q$, $\mathcal F\subset \mathcal D$ such that: 
\begin{enumerate}
 \item $\mathcal C$ is a strictly (i.e.\ not containing lines) convex cone generated by $\rho(\mathcal F)$ and finitely many elements of $\mathcal V$,
 \item the intersection of $\mathcal V$ with the relative interior of $\mathcal C$ is non-empty,
 \item $0\notin\rho(\mathcal F)$.
\end{enumerate}

If $X$ is a simple embedding of $X^+$ with closed orbit $Y$, we let $\mathcal F(X)$ denote the set of $D\in\mathcal D$ such that $\bar D\supset Y$, and we let $\mathcal C(X)$ denote the cone in $\mathcal Q$ generated by all $\rho(D)$, where $D$ is a $B$-invariant divisor (possibly also $G$-invariant) in $X$ containing $Y$.

\begin{theorem}[{\cite[Theorem 3.1]{KnLV}}]\label{classemb}
 The association $X\to (\mathcal C(X),\mathcal F(X))$ is a bijection between isomorphism classes of simple embeddings of $X^+$ and strictly convex colored cones.
\end{theorem}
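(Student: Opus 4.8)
The plan is to realize the bijection by attaching to each simple embedding a canonical affine $B$-stable \emph{chart} out of which the whole embedding can be rebuilt, and then running the construction in reverse starting from an abstract colored cone. This reduces the $G$-equivariant classification to an essentially toric statement, the spherical geometry entering only through the affineness of the chart.

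\emph{From an embedding to its colored cone.} Given a simple embedding $X$ with closed orbit $Y$, set $\mathring X_Y := X \smallsetminus \bigcup \overline D$, the union running over all $B$-stable prime divisors $D$ of $X$ (colors and $G$-stable divisors) with $Y \not\subseteq \overline D$; the divisors removed are exactly the colors in $\mathcal D \smallsetminus \mathcal F(X)$ and the $G$-invariant divisors missing $Y$. Then $\mathring X_Y$ is $B$-stable, open, contains $\mathring X^+$, meets $Y$, and --- the crucial geometric point --- is \emph{affine}, with $G \cdot \mathring X_Y = X$. I would obtain affineness from the local structure theorem for spherical varieties, which furnishes, around a point of $Y$, a $P(X)$-stable affine open of the form $U_{P(X)} \times Z$ with $Z$ affine, and identify this open with $\mathring X_Y$ using Knop's description of the $B$-orbits and the $B$-stable divisors. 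Being a normal affine $B$-variety with dense $B$-orbit, $\mathring X_Y$ is determined by the $B$-stable prime divisors it contains, namely those $D$ with $Y \subseteq \overline D$; the colors among them are $\mathcal F(X)$, and their valuations $\rho(D)$ generate $\mathcal C(X)$. One checks that $(\mathcal C(X), \mathcal F(X))$ is a strictly convex colored cone: finite generation is clear, strict convexity reflects the separatedness of $X$, condition (2) holds because the $G$-invariant valuation centred on $Y$ lies in $\operatorname{relint} \mathcal C(X)$, and (3) is the remaining non-degeneracy. Since $X$ is the union of the $G$-translates of $\mathring X_Y$ glued along $X^+$, it is recovered from $(\mathcal C(X), \mathcal F(X))$; this gives injectivity.

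\emph{From a colored cone to an embedding.} Conversely, given a strictly convex colored cone $(\mathcal C, \mathcal F)$, build the chart $\mathring X := \operatorname{Spec} A$, where $A \subset k(X^+)$ is the ring of rational functions whose divisor of poles is supported on the colors in $\mathcal D \smallsetminus \mathcal F$ and whose order along every $B$-stable valuation generating $\mathcal C$ (the colors of $\mathcal F$ and the chosen $G$-invariant generators) is non-negative. One must verify that $A$ is finitely generated (polyhedrality of $\mathcal C$, finite generation of $\varchi(X)$, finiteness of $\mathcal D$), that $X^+ \hookrightarrow \mathring X$ is an open immersion with dense image whose complement consists of the expected $B$-stable divisors, and that $\mathring X$ is normal, being an intersection of discrete valuation rings. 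Gluing the $G$-translates of $\mathring X$ inside the common function field then yields a normal $G$-variety $X \supseteq X^+$; strict convexity of $\mathcal C$ is exactly what makes this gluing separated (as strong convexity does for toric fans), condition (2) makes the $G$-invariant valuation in $\operatorname{relint} \mathcal C$ acquire a centre, which is a closed $G$-orbit $Y$, and (3) removes a degeneracy in recovering $\mathcal F$. One finishes by checking that $X$ is simple with closed orbit $Y$, that $(\mathcal C(X), \mathcal F(X)) = (\mathcal C, \mathcal F)$, and that the two constructions are mutually inverse.

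\emph{Main obstacle.} The one step that is not formal is the affineness of $\mathring X_Y$ --- equivalently, in the reverse direction, the fact that $\operatorname{Spec} A$ genuinely contains $X^+$ as a dense open orbit with the right divisorial complement and a closed $G$-orbit. This forces one to use real spherical input beyond toric combinatorics: the local structure theorem, Knop's analysis of $B$-orbits and $B$-stable divisors, and the fact that $G$-invariant valuations on a spherical variety are geometric --- realized by $G$-invariant divisors on suitable models --- so that the abstract cone $\mathcal C \subset \mathcal Q$ really does encode the divisorial geometry of the embedding near its closed orbit. Once this is in place, normality, the gluing, the separatedness criterion from strict convexity, and the matching of the combinatorial invariants all follow the standard toric/Luna--Vust template.
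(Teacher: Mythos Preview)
The paper does not prove this theorem; it is quoted verbatim from \cite[Theorem~3.1]{KnLV} and used as a black box. Your outline is essentially the standard Luna--Vust argument as presented in that reference (the affine $B$-chart $\mathring X_Y$, reconstruction via $G$-translates, and the converse via $\operatorname{Spec}$ of the appropriate subring of $k(X^+)$), so there is nothing to compare against in the present paper --- you have sketched the proof the author is citing, not one the author gives.
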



Now let us focus on affine and quasi-affine spherical varieties. We recall from \cite[Theorem 6.7]{KnLV}: 
\begin{theorem}\label{affine}
 A spherical variety $X$ is affine if and only if $X$ is simple and there exists a $\chi\in \varchi(X)$ with $\chi|_{\mathcal V}\ge 0$, $\chi|_{\mathcal C(X)}=0$ and $\chi|_{\rho(\mathcal D\smallsetminus \mathcal F(X))}<0$. In particular, $H\backslash G$ is affine if and only if $\mathcal V$ and $\rho(\mathcal D)$ are separated by a hyperplane, while it is quasi-affine if and only if $\rho(\mathcal D)$ does not contain zero and spans a strictly convex cone.
\end{theorem}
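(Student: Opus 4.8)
The plan is to derive the affineness criterion from the general structure theory of simple embeddings (Theorem \ref{classemb}) together with the fact that, for a spherical variety, regular functions on a simple embedding $X$ are controlled by the colored cone $(\mathcal C(X),\mathcal F(X))$. First I would record the basic dictionary: since $X$ is spherical with open $B$-orbit, the algebra $k[X]$ is multiplicity-free under $B$, so $k[X]=\bigoplus_{\chi} k[X]_\chi$ where the sum is over a subset $\Sigma(X)\subseteq \varchi(X)$, and a character $\chi\in\varchi(X)$ lies in $\Sigma(X)$ precisely when the $B$-eigenfunction $f_\chi\in k(X)^{(B)}$ (well-defined up to scalar) is regular on $X$. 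Regularity of $f_\chi$ on $X$ is in turn detected divisorially: $f_\chi$ is regular along every $B$-stable prime divisor. For a color $D\in\mathcal D$ this says $\langle\chi,\rho(D)\rangle\ge 0$, and for a $G$-stable divisor, i.e. a ray of $\mathcal C(X)$ coming from $\mathcal V$, it says $\chi$ is nonnegative there; conversely a deeper fact (Knop, using that $X^+$ is quasi-affine once such a $\chi$ exists) is that these divisorial conditions suffice for global regularity on the \emph{normal} variety $X$. So $\Sigma(X) = \{\chi\in\varchi(X) : \chi|_{\mathcal C(X)}\ge 0,\ \chi|_{\rho(\mathcal D)}\ge 0\}$ — more precisely $\chi|_{\rho(\mathcal F(X))}\ge 0$ automatically since $\rho(\mathcal F(X))\subseteq\mathcal C(X)$, and the constraint from the remaining colors $\mathcal D\smallsetminus\mathcal F(X)$ is $\chi|_{\rho(\mathcal D\smallsetminus\mathcal F(X))}\ge 0$.

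Next I would show affineness is equivalent to the stated separation condition. If $X$ is affine then $X=\spec k[X]$, and the closed orbit $Y$ — which exists and is unique by simpleness — must be cut out by functions vanishing on it; concretely one needs enough $\chi\in\Sigma(X)$ with $f_\chi|_Y=0$, which forces the existence of a $\chi$ with $\chi|_{\mathcal C(X)}=0$ but $\chi|_{\rho(\mathcal D\smallsetminus\mathcal F(X))}<0$ (strict, so that the corresponding function genuinely detects the complement of $\mathring X^+$ and separates $Y$ from the other boundary behaviour). Combined with the regularity description, such a $\chi$ lies in $\Sigma(X)$ and satisfies $\chi|_{\mathcal V}\ge 0$ (as $\mathcal C(X)\supseteq$ its $G$-part and the cone-generation in clause (1) of a colored cone constrains things), giving the forward direction. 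For the converse, given such a $\chi$ one builds $k[X]$-generators: the eigenfunctions $f_\psi$ for $\psi$ in a suitable finitely generated sub-semigroup of $\Sigma(X)$ together with a large multiple of $\chi$ generate an affine spherical variety with the same colored cone, which by Theorem \ref{classemb} must be $X$ itself. The $H\backslash G$ specialization is then immediate: there $\mathcal C(X)=\{0\}$ and $\mathcal F(X)=\emptyset$, so the condition reads ``there is $\chi\in\varchi(X)$ with $\chi|_{\mathcal V}\ge 0$ and $\chi|_{\rho(\mathcal D)}<0$'', i.e. a hyperplane separating $\mathcal V$ from $\rho(\mathcal D)$; the quasi-affine statement drops the full-orbit requirement and asks only that $k[H\backslash G]$ generate a quasi-affine variety, which holds iff $\rho(\mathcal D)$ avoids $0$ and spans a strictly convex cone (equivalently, some $\chi$ is strictly negative on all of $\rho(\mathcal D)$ without any sign constraint forced on $\mathcal V$).

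The main obstacle I expect is the converse direction — producing the affine variety from the combinatorial datum — because it requires knowing that finitely many $B$-eigenfunctions actually generate a \emph{normal} affine variety whose Luna–Vust invariant is the prescribed colored cone; this is where one genuinely invokes Knop's results on the structure of $k[X]$ for quasi-affine spherical $X^+$ (the statement that $\Sigma(X^+)$ is a finitely generated saturated semigroup and that its spectrum-type constructions behave well), rather than anything one proves by hand. The sign bookkeeping between $\mathcal C(X)$, $\mathcal V$, and $\rho(\mathcal D\smallsetminus\mathcal F(X))$ — in particular checking that clauses (1)–(3) of ``strictly convex colored cone'' interact correctly with the existence of the separating $\chi$ — is routine but must be done carefully to get the strict versus non-strict inequalities right. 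Since the theorem is quoted from \cite[Theorem 6.7]{KnLV}, in the write-up I would simply cite it; the sketch above is the argument one would reconstruct.
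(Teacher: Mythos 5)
The paper does not prove this statement; it is recalled verbatim from \cite[Theorem 6.7]{KnLV}, as you yourself note in your last sentence. So there is no in-paper argument to compare against, and citing Knop is exactly what the paper does.

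That said, your reconstruction contains a sign confusion that makes the forward direction incoherent. You define $\Sigma(X)=\{\chi:\chi|_{\mathcal C(X)}\ge 0,\ \chi|_{\rho(\mathcal D)}\ge 0\}$, and then claim that the $\chi$ of the theorem, which by hypothesis has $\chi|_{\rho(\mathcal D\smallsetminus\mathcal F(X))}<0$, ``lies in $\Sigma(X)$.'' These two statements contradict each other: strict negativity on $\rho(\mathcal D\smallsetminus\mathcal F(X))$ means $f_\chi$ has poles along the colors not containing $Y$, so $\chi\notin\Sigma(X)$. What is actually true (and what the criterion encodes) is that $-\chi$ lies in $\Sigma(X)$: one checks $-\chi|_{\mathcal C(X)}=0\ge 0$, and $-\chi|_{\rho(D)}>0$ for $D\in\mathcal D\smallsetminus\mathcal F(X)$ while $-\chi|_{\rho(D)}=0$ for $D\in\mathcal F(X)\subseteq\mathcal C(X)$. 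Thus $f_{-\chi}\in k[X]$ is a $B$-eigenfunction vanishing exactly along the colors \emph{not} in $\mathcal F(X)$; its non-vanishing locus is the affine $P(X)$-stable open set of the Brion--Luna--Vust local structure theorem, which contains $Y$. The condition $\chi|_{\mathcal V}\ge 0$ (i.e. $-\chi|_{\mathcal V}\le 0$) ensures this open set remains affine as one passes to the full simple embedding rather than just the open orbit. With the sign corrected, the heuristic --- that affineness is detected by existence of such a cutting-out eigenfunction for the local structure, while simpleness ensures a unique closed orbit --- is the right shape of the argument, but the derivation ``one needs enough $\chi\in\Sigma(X)$ with $f_\chi|_Y=0$, which forces the existence of a $\chi$ with $\chi|_{\mathcal C(X)}=0$'' remains unjustified as stated (vanishing on $Y$ is not the same as being constant on the $G$-stable boundary divisors), and the converse construction you sketch is similarly too loose to be called a proof. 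Since the paper quotes the theorem, the honest conclusion is the one you reach in your final paragraph: cite \cite[Theorem 6.7]{KnLV}.
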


Recall \cite[\S 1.1]{BG} that a variety $Y$ over a field $k$ is called \emph{strongly quasi-affine} if the algebra $k[Y]$ of global functions on $Y$ is finitely generated and the natural map $Y\to \spec k[Y]$ is an open embedding. Then the variety $\overline{Y}^\aff:= \spec k[Y]$ is called the \emph{affine closure} of $Y$.
\begin{proposition}
A homogeneous quasi-affine spherical variety $Y=H\backslash G$ is strongly quasi-affine. If $X:=\overline{H\backslash G}^\aff$ then the data $(\mathcal C(X), \mathcal F(X))$ can be described as follows: Consider the cone $\mathcal R\subset \varchi(X)\otimes \QQ$ generated by the set of $\chi\in \varchi(X)$ such that $\chi|_{\mathcal V}\ge 0$, $\chi|_{\rho(\mathcal D)}\le 0$. Choose a point $\chi$ in the relative interior of $\mathcal R$. Then $\mathcal F(X)=\{D\in\mathcal D| \rho(D)(\chi)=0\}$ and $\mathcal C(X)$ is the cone generated by $\mathcal F(X)$.
\end{proposition}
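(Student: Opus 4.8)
The plan is to verify the two separate assertions — strong quasi-affineness of $Y=H\backslash G$, and the identification of the colored cone $(\mathcal C(X),\mathcal F(X))$ — by combining the criterion of Theorem \ref{affine} with the classification of Theorem \ref{classemb}. For the first assertion, since $Y$ is quasi-affine by hypothesis, Theorem \ref{affine} tells us that $\rho(\mathcal D)$ does not contain $0$ and spans a strictly convex cone; equivalently, there is a hyperplane separating (the rays of) $\rho(\mathcal D)$ from $\mathcal V$ only in the weak sense needed below. Finite generation of $k[Y]=k[H\backslash G]$ is standard for spherical homogeneous spaces (it follows from the fact that $k[G]^H$ is multiplicity-free as a $G$-module, each isotypic component finite-dimensional, together with a theorem that the semigroup of highest weights occurring is finitely generated — this is exactly the content of quasi-affineness in the Luna--Vust dictionary, cf.\ \cite{KnLV}). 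Granting finite generation, $\spec k[Y]$ is an affine spherical variety containing $Y$ as the dense orbit $X^+$, and the map $Y\to\spec k[Y]$ is dominant with the same function field; the point is to show it is an \emph{open} embedding, which amounts to showing no new $G$-orbit of codimension one can be added when one only adds $G$-invariant valuations with trivial restriction. This is where one invokes that $Y$ is quasi-affine, hence a union of affine opens is covered, and the normality of $\spec k[Y]$; I would cite \cite[\S1.1]{BG} and \cite{KnLV} for the precise argument rather than reproduce it.

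For the description of $(\mathcal C(X),\mathcal F(X))$, the key is that $X=\overline{Y}^{\aff}$ is the \emph{canonical} (largest) affine simple embedding of $Y$, characterized as the one whose colored cone is as large as possible subject to the constraints in Theorem \ref{affine}. So I would argue: by Theorem \ref{affine}, a simple embedding $X'$ of $Y$ is affine iff there exists $\chi\in\varchi(X)$ with $\chi|_{\mathcal V}\ge 0$, $\chi|_{\mathcal C(X')}=0$, and $\chi|_{\rho(\mathcal D\smallsetminus\mathcal F(X'))}<0$. The affine closure corresponds to taking $k[X']$ as large as possible, i.e.\ to the \emph{minimal} set of functions thrown away, i.e.\ to the colored cone on which the largest possible $\chi$ vanishes. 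Consider the cone $\mathcal R\subset\varchi(X)\otimes\QQ$ generated by all $\chi\in\varchi(X)$ with $\chi|_{\mathcal V}\ge 0$ and $\chi|_{\rho(\mathcal D)}\le 0$; picking $\chi_0$ in its relative interior, the conditions ``$\chi_0|_{\mathcal V}\ge 0$'' and ``$\chi_0|_{\rho(D)}\le 0$ for all $D$, with equality exactly on a face'' single out $\mathcal F(X)=\{D\in\mathcal D\mid\rho(D)(\chi_0)=0\}$ as the set of colors that must be contained in the closed orbit, and force $\mathcal C(X)$ to be the cone $\chi_0^\perp\cap(\text{cone generated by }\rho(\mathcal F(X)))$, which by genericity of $\chi_0$ is exactly the cone generated by $\rho(\mathcal F(X))$. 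One then checks conditions (1)--(3) for a strictly convex colored cone directly: strict convexity because $\chi_0$ is non-negative and vanishes precisely on the face we kept; condition (2) because $\chi_0|_{\mathcal V}\ge 0$ together with quasi-affineness gives a $G$-invariant valuation in the relative interior; condition (3) because $0\notin\rho(\mathcal F(X))$ follows from $0\notin\rho(\mathcal D)$.

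The main obstacle I anticipate is making precise the claim that the affine closure corresponds to the \emph{extremal} (maximal) affine colored cone, and matching that extremality with the ``relative interior of $\mathcal R$'' prescription — one has to rule out that a strictly smaller colored cone could still yield an affine variety with the \emph{same} ring of functions, and conversely that the cone built from $\mathcal F(X)=\{\rho(D)(\chi_0)=0\}$ actually satisfies the separation condition of Theorem \ref{affine} with this very $\chi_0$. Concretely: I must verify that $\chi_0|_{\mathcal C(X)}=0$ (immediate, since $\mathcal C(X)$ is generated by $\rho(\mathcal F(X))$ and $\chi_0$ kills $\mathcal F(X)$ by definition), that $\chi_0|_{\mathcal V}\ge 0$ (by construction of $\mathcal R$), and that $\chi_0|_{\rho(\mathcal D\smallsetminus\mathcal F(X))}<0$ (by genericity — a generic point of $\mathcal R$ is strictly negative on every $\rho(D)$ not lying on the corresponding face). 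Given these three, Theorem \ref{affine} certifies $X$ affine; that it is the \emph{affine closure} (rather than some other affine embedding) follows because any $G$-invariant function on $Y$ extends to it — equivalently $k[X]=k[Y]$ — which one sees from the fact that $\mathcal C(X)$, being generated only by colors and containing no $G$-invariant valuation rays in its relative interior beyond what is forced, is the smallest cone compatible with affineness, hence $X$ dominates every other affine embedding. I would present this last comparison step carefully, as it is the genuine content; the verification of (1)--(3) and of the three inequalities is routine once the setup is in place.
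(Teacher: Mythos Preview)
Your approach is essentially the same as the paper's: both prove finite generation of $k[Y]$ via the multiplicity-free decomposition $k[Y]=\bigoplus_\lambda V_\lambda$ and the finite generation of the monoid of highest weights, and both identify $(\mathcal C(X),\mathcal F(X))$ by combining Theorem~\ref{affine} with the universal property that the affine closure maps to every affine embedding.

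Where you diverge is in the last step. The paper simply invokes \cite[Theorem 4.1]{KnLV} on morphisms between spherical embeddings: since $X\to X'$ for every affine $X'$, the colored cone of $X$ must be contained in that of $X'$, and one reads off from Theorem~\ref{affine} that the minimal such cone is the one described. You instead try to verify minimality by hand, and this is where your write-up is incomplete. The sentence ``being generated only by colors \dots\ is the smallest cone compatible with affineness'' is not an argument. What you need to say is: if $(\mathcal C',\mathcal F')$ is the colored cone of any affine embedding $X'$, then the witnessing $\chi'$ from Theorem~\ref{affine} satisfies $\chi'|_{\mathcal V}\ge 0$ and $\chi'|_{\rho(\mathcal D)}\le 0$ (the latter because $\chi'$ vanishes on $\rho(\mathcal F')$ and is strictly negative on the rest), hence $\chi'\in\mathcal R$; since $\chi_0$ lies in the relative interior of $\mathcal R$, its zero locus among the $\rho(D)$ is contained in that of $\chi'$, giving $\mathcal F(X)\subset\mathcal F'$ and thus $\mathcal C(X)\subset\mathcal C'$.

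Your verification of condition~(2) for a colored cone (``quasi-affineness gives a $G$-invariant valuation in the relative interior'') is also too vague. The paper's observation is cleaner: any $v\in\mathcal V\cap\operatorname{cone}(\rho(\mathcal D))$ satisfies $0\le\chi_0(v)\le 0$, forcing $v\in\operatorname{cone}(\rho(\mathcal F(X)))=\mathcal C(X)$; so $\mathcal C(X)$ contains $\mathcal V\cap\operatorname{cone}(\rho(\mathcal D))$, and one argues this meets the relative interior.
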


\begin{remark}
 The first statement of the proposition generalizes a result of Hochschild and Mostow \cite{HM} for the variety $U_P\backslash G$, where $U_P$ is the unipotent radical of a parabolic subgroup $P$ of $G$. Indeed, this variety is spherical under the action of $M\times G$, where $M$ is the reductive quotient of $P$.
\end{remark}

\begin{proof}
As a representation of $G$, $k[Y]$ is locally finite and decomposes: 
\begin{equation}\label{decomposition}
 k[Y]=\oplus_\lambda V_\lambda
\end{equation}
 where $V_\lambda$ is the isotypic component corresponding to the representation with highest weight $\lambda$, and the sum is taken over all $\lambda$ with $V_\lambda\ne 0$. Since the variety is spherical, each $V_\lambda$ is isomorphic to one copy of the representation with highest weight $\lambda$. Moreover, the multiplicative monoid of non-zero highest-weight vectors $k[Y]^{(B)}$ is the submonoid of $k(Y)^{(B)}$ (the group of non-zero rational $B$-eigenfunctions) consisting of regular functions. Regular $B$-eigenfunctions are precisely those whose eigencharacter satisfies $\chi|_{\rho(\mathcal D)}\ge 0$; since the set $\mathcal D$ is finite, the monoid of $\lambda$ appearing in the decomposition (\ref{decomposition}) is finitely-generated. Since the multiplication map: $V_\mu\otimes V_\nu$ has image in the sum of $V_\lambda$ with $\lambda\le \mu+\nu$, and composed with the projection: $k[Y]\to V_{\mu+\nu}$ it is surjective, it follows that the sum of the $V_\lambda$, for $\lambda$ in a set of generators 
for the monoid of $\lambda$'s appearing in (\ref{decomposition}), generates $k[Y]$.

The second condition, namely that $Y\to X$ is an open embedding, follows from the assumption that $Y$ is quasi-affine and the homogeneity of $Y$. Hence, $Y$ is strongly quasi-affine.

The affine closure $X$ has the property that for every affine completion $X'$ of $Y$ there is a morphism: $X\to X'$. The description of $(\mathcal C(X),\mathcal F(X))$ now follows from Theorem \ref{affine} above and Theorem 4.1 in \cite{KnLV}, which describes morphisms between spherical embeddings. Notice that the cone $\mathcal C(X)$, as described, will necessarily contain the intersection of $\mathcal V$ with the cone generated by $\rho(\mathcal D)$ in its relative interior, therefore its relative interior will have non-empty intersection with $\mathcal V$.
\end{proof}

Let us now discuss the geometry of affine spherical varieties. The following is a corollary of Luna's slice theorem:

\begin{theorem}[{\cite[III.1.Corollaire 2]{LuSlices}}]\label{Lunacorollary}
If $G$ is a reductive group over an algebraically closed field $k$ in characteristic zero, acting on an affine variety $X$ so that $k[X]^G=k$, then $X$ contains a closed $G$-homogeneous affine subvariety $Y$ such that the embedding $Y\hookrightarrow X$ admits an equivariant splitting: $X\twoheadrightarrow Y$. If $G$ is smooth then the fiber over any (closed) point $y\in Y$ is $G_y$-equivariantly isomorphic to the vector space of a linear representation of $G_y$.
\end{theorem}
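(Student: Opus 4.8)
The plan is to reduce the statement to Luna's \'etale slice theorem \cite{LuSlices}, which I regard as the one genuinely hard input; everything else is bookkeeping. First I would pin down $Y$. Since $k[X]^G=k$, the categorical quotient $\pi\colon X\to X/\!\!/G=\spec k[X]^G$ has a one-point target; by the standard fact that each fibre of the categorical quotient of a reductive group acting on an affine variety contains exactly one closed orbit, and since the only fibre here is $X$ itself, $X$ contains a unique closed $G$-orbit $Y$. It is affine (closed in the affine $X$) and $G$-homogeneous (a single orbit). Fix $y\in Y$ and put $H=G_y$; since $Y=H\backslash G$ is affine and $k$ has characteristic zero, Matsushima's criterion shows $H$ is reductive, so the slice theorem applies at $y$.

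Next I would invoke the slice theorem at $y$: there is an $H$-stable affine locally closed subvariety $S\ni y$ and a \emph{strongly} \'etale $G$-morphism $\psi\colon G\times^{H}S\to X$, meaning that $\psi$ is $G$-equivariant and \'etale and that the square formed by $\psi$, the quotient maps $G\times^{H}S\to(G\times^{H}S)/\!\!/G=S/\!\!/H$ and $X\to X/\!\!/G$, and the induced map $S/\!\!/H\to X/\!\!/G$, is Cartesian with \'etale bottom arrow. After replacing $S$ by the connected component of $y$ --- which is $H$-stable, as $H$ fixes $y$, and still satisfies the conclusion of the slice theorem --- $S/\!\!/H$ is connected and \'etale over the single point $X/\!\!/G$, hence is itself a single reduced point, i.e.\ $k[S]^H=k$; then the bottom arrow of the Cartesian square is an isomorphism, and therefore so is $\psi$. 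Thus $X\cong G\times^{H}S$ with $y\in S$ an $H$-fixed point, and the constant map $S\to\{y\}$ induces a $G$-morphism $r\colon X=G\times^{H}S\to G\times^{H}\{y\}=Y$ which restricts to the identity on $Y\subset X$; this $r$ is the asserted equivariant retraction, with fibre over $y$ equal to $S$.

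For the final assertion I read the hypothesis as ``$X$ smooth'' (over a characteristic-zero field $G$ is automatically smooth, so the condition must be intended for $X$). Then $S$, a fibre of the smooth fibration $G\times^{H}S\to G/H$, is smooth; applying the slice theorem now to the $H$-action on $S$ at the $H$-fixed point $y$ --- where the orbit is the point $\{y\}$ and the slice representation is $N:=T_yS$ --- and using once more that the relevant categorical quotients are single points (so the strongly \'etale slice map is an isomorphism), together with the part of the theorem which, in the smooth case, allows the slice itself to be taken inside $N$, I obtain an $H$-equivariant isomorphism $S\cong S'$ with $S'$ an $H$-stable affine open neighbourhood of $0$ in $N$. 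It remains to see $S'=N$. Restriction gives an injection $k[N]^H\hookrightarrow k[S']^H=k[S]^H=k$, so $k[N]^H=k$. If $N\smallsetminus S'$ were nonempty then, since $S'$ is affine open in the normal variety $N$, its complement would be a divisor; being $H$-stable and $k[N]$ a UFD, it would be $\{f=0\}$ for an $H$-semiinvariant $f\in k[N]$ with $f(0)\neq0$, and comparing degree-zero parts in $h\cdot f=c_h f$ (with $c_h\in k^\times$) forces $c_h=1$ for all $h\in H$, so $f\in k[N]^H=k$ is constant --- absurd, since $\{f=0\}$ is a nonempty divisor. Hence $S\cong S'=N$ as $H$-modules, which is the claim.

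The main obstacle is Luna's slice theorem itself; granting it, the delicate points in the argument above are just the reduction to a \emph{connected} slice --- which is what promotes ``\'etale'' to ``isomorphism'' via the Cartesian square once the base is a single point --- and, in the smooth case, the elementary argument identifying the open neighbourhood $S'$ with the whole slice representation $N$.
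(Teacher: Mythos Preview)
The paper does not prove this theorem; it is quoted as \cite[III.1.Corollaire 2]{LuSlices} and used as a black box. Your proposal is a correct derivation of the corollary from Luna's main \'etale slice theorem, which is precisely how Luna himself obtains it: the uniqueness of the closed orbit from $k[X]^G=k$, Matsushima's criterion for reductivity of $H=G_y$, the promotion of the strongly \'etale slice map to an isomorphism once the base $S/\!\!/H\to X/\!\!/G$ is a map of single reduced points (your passage to a connected slice is the right manoeuvre here, since $S$ connected forces $S/\!\!/H$ connected), and the identification of the fibre with the slice representation in the smooth case are all standard and correctly argued. Your UFD/semiinvariant argument showing that an $H$-stable affine open $S'\subset N$ with $0\in S'$ and $k[N]^H=k$ must equal all of $N$ is clean and correct.

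You are also right to flag ``$G$ smooth'' as a slip for ``$X$ smooth'': over a field of characteristic zero a reductive group is automatically smooth, so the hypothesis as written is vacuous, whereas smoothness of $X$ is exactly what is needed to invoke the refined form of the slice theorem that places the slice inside the normal space.
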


Luna's theorem also states that $Y$ is contained in the closure of any $G$-orbit, which is easily seen to be true in the spherical case since affine spherical varieties are simple.  The $G$-automorphism group ``retracts'' $X$ onto $Y$:

\begin{proposition}\label{Taction}\label{structureaffine}
Let $X$ be an affine spherical $G$-variety and let $Y$ be as in the theorem above, considered both as a quotient and as a subvariety of $X$. Let $T$ be the maximal torus in $\Aut^G(X)$ which acts trivially on $Y$. Then the closure of the $T$-orbit of every point on $X$ meets $Y$. Equivalently, $k[X]^T=k[Y]$.
\end{proposition}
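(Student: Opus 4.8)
The plan is to reduce the statement to a property of the fibre of the Luna retraction over a point of $Y$, and then to a general fact about linearly reductive groups acting on affine space; throughout I may assume $k$ algebraically closed of characteristic zero. Since $X$ is affine spherical, $k[X]$ is multiplicity-free, and its decomposition into (irreducible) isotypic components gives $k[X]^G=k$, so Theorem \ref{Lunacorollary} applies: $Y$ is the unique closed orbit, $Y\cong G/G_y$ for a chosen $y\in Y$, and the fibre $W:=p^{-1}(y)$ of the retraction $p\colon X\twoheadrightarrow Y$ is (the total space of) a linear $G_y$-representation, so $X\cong G\times^{G_y}W$ with $Y=G\times^{G_y}\{0\}$. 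First I would observe that $p$ is \emph{canonical}: multiplicity-freeness forces the $G$-equivariant section $k[Y]\hookrightarrow k[X]$ of restriction to be unique (it must send each isotypic piece $V_\lambda\subseteq k[Y]$ to the unique $V_\lambda\subseteq k[X]$), hence $p$ is equivariant for $\Aut^G(X)$. Therefore any automorphism of $X$ trivial on $Y$ preserves every fibre of $p$, and restriction to $W$ identifies the group of such automorphisms with the group of $G_y$-equivariant automorphisms of the variety $W$ fixing $0$; in particular $T$ is identified with a torus of such automorphisms. Since $p$ is $T$-equivariant with $T$ trivial on $Y$, the closure of every $T$-orbit lies in a fibre of $p$, which meets $Y$ in exactly the point of the section; transporting by the transitive $G$-action on fibres, the assertion ``every $T$-orbit closure meets $Y$'' becomes ``$0\in\overline{Tw}$ for all $w\in W$'', i.e.\ $k[W]^T=k$. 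Its equivalence with $k[X]^T=k[Y]$ is then formal: for ``$\Rightarrow$'' a $T$-invariant function is constant on $T$-orbit closures hence equals $p^*$ of its restriction to $Y$; for ``$\Leftarrow$'' one uses that in $\spec k[X]^T$ every fibre contains a unique closed $T$-orbit, which here is the corresponding point of $Y$.

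Next I would pin down $W$ and $T$. Write $W=\bigoplus_j U_j^{\oplus m_j}$ in $G_y$-isotypic components. The $G_y$-equivariant \emph{linear} automorphisms of $W$ fixing $0$ form $\prod_j\GL(M_j)$ (acting on the multiplicity spaces $M_j$), a subgroup of $\Aut^G(X)$; since $\Aut^G(X)$ is diagonalizable, every $m_j=1$, so $W=\bigoplus_{j=1}^r U_j$ with the $U_j$ pairwise non-isomorphic irreducible $G_y$-modules. Again by diagonalizability, the group of automorphisms of $X$ trivial on $Y$ is diagonalizable, so $T$ is its identity component, and it contains the torus $\Gm^r$ of scalars on the summands $U_j$. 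By Schur's lemma $T$ preserves each $U_j$, acting on it through a connected torus $T_j$ of $G_y$-equivariant automorphisms of $U_j$ fixing $0$, with $T\hookrightarrow\prod_j T_j$ and with $T_j$ containing the scalar $\Gm$.

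The crucial input is the following fact: a linearly reductive group $H$ acting algebraically on $\mathbb A^n$ and fixing the origin, and acting trivially on the tangent space there, acts trivially on $\mathbb A^n$. I would prove this by passing to the associated graded of $R=k[\mathbb A^n]$ along powers of the maximal ideal $\mathfrak m$ of the origin: $H$ preserves each $\mathfrak m^i$, acts by graded algebra automorphisms on $\operatorname{gr}_{\mathfrak m}R=\operatorname{Sym}^\bullet(\mathfrak m/\mathfrak m^2)$ and trivially in degree one, hence trivially on $\operatorname{gr}_{\mathfrak m}R$ (it is generated in degree one); since $R$ is a locally finite $H$-module and $H$ is linearly reductive, every finite-dimensional $H$-submodule of $R$ is isomorphic to its associated graded, hence trivial. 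Granting this: $T_j$ acts on the cotangent space $U_j^*$ by $G_y$-equivariant linear maps, i.e.\ by a scalar character $\chi_j$; the identity component of $\ker\chi_j$ acts trivially on the tangent space, hence (by the fact) trivially on $U_j$, hence is trivial since $T_j$ acts faithfully; as $T_j$ is a connected torus containing the scalar $\Gm$, this forces $T_j=\Gm$ acting by scalars. Therefore $T=\Gm^r$ acting by scalars on the $U_j$; its weights on $W$ are the standard characters of $\Gm^r$, which lie in an open half-space, so $k[W]^T=\operatorname{Sym}^\bullet W^*$ has no invariants beyond constants, i.e.\ $k[W]^T=k$. This proves the proposition.

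The main obstacle is the combination carried out in the last two paragraphs: one must rule out that the (a priori enormous) group of $G_y$-equivariant automorphisms of $W$ fixing $0$ contributes, beyond the scalars on the irreducible summands, any further torus of automorphisms. This is exactly where the diagonalizability of $\Aut^G(X)$ and the fact about reductive actions on affine space are both indispensable; everything else is formal manipulation with the Luna retraction $p$.
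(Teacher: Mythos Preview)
Your argument is essentially sound and takes a genuinely different, more elementary route than the paper. The paper reduces (as you do) to the case where $Y$ is a point, but then simply invokes \cite[Corollary 7.9]{KnMotion}: for a \emph{generic} $x\in X$ there is a one-parameter subgroup of $\Aut^G(X)$ whose orbit through $x$ limits to the fixed point, whence $k[X]^T=k$, whence the unique closed $T$-orbit is $\{0\}$ and every $T$-orbit closure contains it. That result comes out of Knop's theory of invariant collective motion (moment maps on $T^*X$), so the paper's proof is short but rests on fairly heavy machinery. Your argument, by contrast, stays inside the Luna slice picture and uses only the diagonalizability of $\Aut^G(X)$ plus elementary representation theory; it is self-contained and gives more, namely an explicit identification of $T$ with the torus $\Gm^r$ of scalars on the irreducible summands of $W$.

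There is one genuine slip, however. The sentence ``By Schur's lemma $T$ preserves each $U_j$'' is not justified: Schur only controls \emph{linear} $G_y$-maps, and a priori $T$ acts on $W$ by nonlinear $G_y$-equivariant automorphisms, which need not preserve the linear subspaces $U_j\subset W$. (For a non-spherical $W$ one can write down counterexamples.) Two easy fixes are available. First, you can bypass the issue entirely using sphericity: since $W$ is a spherical $G_y$-variety, $k[W]=\operatorname{Sym}(W^*)$ is multiplicity-free, so each irreducible $U_j^*$ occurs in $k[W]$ only in degree one; as $T$ commutes with $G_y$ it preserves isotypic components, hence preserves $W^*\subset k[W]$, hence acts \emph{linearly} on $W$, and then Schur plus commutativity give $T\subset\Gm^r$, so $T=\Gm^r$ and your ``crucial input'' lemma is not even needed. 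Alternatively, keep that lemma but apply it once to $T$ acting on $W$ rather than to each $T_j$ separately: the differential at $0$ gives a homomorphism $\chi\colon T\to \GL(W)^{G_y}=\Gm^r$ (here Schur \emph{is} legitimate, on the tangent space), the identity component of $\ker\chi$ acts trivially on the tangent space hence on $W$ hence is trivial, and since $\Gm^r\subset T$ with $\chi|_{\Gm^r}=\mathrm{id}$ you conclude $T=\Gm^r$ as before. Either way the rest of your proof goes through unchanged.
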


\begin{proof} 
This is essentially Corollary 7.9 of \cite{KnMotion}. More precisely, let us assume that $G$ has a fixed point on $X$, i.e.\ $Y$ is a point. (The question is easily reduced to this case, since every $G_y$-automorphism of the fiber of $X\to Y$ over $y$ extends uniquely to a $G$-automorphism of $X$.) The proof of \emph{loc.cit.\ }shows that for a generic point $x\in X$ there is a one-parameter subgroup $H$ of $\Aut^G(X)$ such that $x\cdot H$ contains the fixed point in its closure. Hence $k[X]^T=k$ and therefore $X$ contains a unique closed $T$-orbit.
\end{proof}

Notice that if $G$ has a fixed point on $X$ then we can embed $X$ into a finite sum $V=\oplus_i V_i$ of finite-dimensional representations of $G$, such that the fixed point is the origin in $V$ and there is a subtorus $T$ of $\prod_i \Aut^G(V_i)$ acting on $X$ with the origin as its only closed orbit. (Simply take $V$ to be the dual of a $G$-stable, generating subspace of $k[X]$.)

\subsection{Generalized Cartan decomposition}\label{ssstratification}

 Let $\mathcal K=\CC((t))$, the field of formal Laurent series over $\CC$, and $\mathfrak O=\CC[[t]]$ the ring of formal power series. If $X^+$ is a homogeneous spherical variety over $\CC$, it was proven by Luna and Vust \cite{LV} that:
\begin{theorem}\label{Cstratification}
 $G(\mathfrak O)$-orbits on $X^+(\mathcal K)$ are parametrized by $\Lambda_X^+$, where to ${\check\lambda} \in \Lambda_X^+$ corresponds the orbit through ${\check\lambda}(t)\in A_X(\mathcal K)$.
\end{theorem}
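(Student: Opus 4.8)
The plan is to establish two statements: (i) every $G(\mathfrak O)$-orbit on $X^+(\mathcal K)$ meets $\{\check\lambda(t):\check\lambda\in\Lambda_X^+\}$, and (ii) distinct $\check\lambda\in\Lambda_X^+$ give distinct orbits. I will use freely that $X^+(\mathcal K)=x_0\cdot G(\mathcal K)$ and $X^+(\mathfrak O)=x_0\cdot G(\mathfrak O)$, which follow from $\operatorname{cd}\mathcal K\le 1$ resp.\ the triviality of $H^1(\mathfrak O,H)$ ($\mathfrak O$ being henselian with algebraically closed residue field), working componentwise if $H:=G_{x_0}$ is disconnected. For the reduction in (i): by the Iwasawa decomposition $G(\mathcal K)=P(X)(\mathcal K)\cdot G(\mathfrak O)$ and the identity $x_0\cdot P(X)=\mathring X^+$ (immediate from the definition of $P(X)$, as $B\subseteq P(X)$), every point of $X^+(\mathcal K)$ is $G(\mathfrak O)$-equivalent to a point of $\mathring X^+(\mathcal K)$. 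Apply the local structure theorem of Brion--Luna--Vust: with $P=P(X)$, a Levi $L=L(X)$, opposite parabolic $P^-$ and $U^-:=R_u(P^-)$, the multiplication map $U^-\times A_X\to\mathring X^+$ is an isomorphism, $L$ acting on the slice $A_X$ through $L\twoheadrightarrow A_X$. A point thus becomes a pair $(u,a)\in U^-(\mathcal K)\times A_X(\mathcal K)$; since $A(\mathfrak O)\subseteq L(\mathfrak O)$ maps onto $A_X(\mathfrak O)$ (because $A\cap H$ is diagonalizable and $\operatorname{Pic}\mathfrak O=1$, $\mathfrak O^\times$ divisible), acting by $A(\mathfrak O)$ we may arrange $a=\check\lambda(t)$ with $\check\lambda\in\Lambda_X$. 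It remains to absorb $u$, i.e.\ to show $\check\lambda(t)\cdot u$ is $G(\mathfrak O)$-equivalent to $\check\lambda'(t)$ for some $\check\lambda'\in\Lambda_X$; \emph{this is the main obstacle}. I would argue as Luna and Vust do: fix a complete toroidal embedding $X^+\hookrightarrow\bar X$ (fan a triangulation of $\mathcal V$, no colors); by the valuative criterion of properness the loop extends to an arc $\tilde x\in\bar X(\mathfrak O)$, and $\tilde x(0)$ lies in a $G$-orbit $\mathcal O\subseteq\bar X$ that is itself homogeneous spherical, with invariants $(\Lambda_\mathcal O,\mathcal V_\mathcal O,\dots)$ read off from those of $X$ and from the cone $\sigma$ of $\mathcal O$ via Theorem \ref{classemb}. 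Move $\tilde x(0)$ by a constant element of $G(\mathfrak O)$ to a standard point of $\mathcal O$, split off the transverse affine-toric slice (contributing a coweight in $\sigma\cap\Lambda_X$), and recurse on $\operatorname{codim}\mathcal O$ — the base case $\mathcal O=X^+$ being $x\in X^+(\mathfrak O)=x_0G(\mathfrak O)$, with $\check\lambda'=0$. The accumulated coweight lies in $\mathcal V$; together with the next paragraph it lands in $\mathcal V\cap\Lambda_X=\Lambda_X^+$.

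The $W_X$-symmetry used above is: for $w\in W_X$, the points $\check\lambda(t)$ and $(w\check\lambda)(t)$ lie in a common $G(\mathfrak O)$-orbit. Writing $w$ as a product of simple reflections $s_\gamma$, $\gamma\in\Delta_X$, this reduces to the rank-one case and follows from Knop's realization of the little Weyl group \cite{KnHC,KnOrbits} — an element of $G(\CC)\subseteq G(\mathfrak O)$ attached to the rank-one spherical subvariety of $\gamma$ conjugates $\check\lambda(t)$ to $(s_\gamma\check\lambda)(t)$. Since $\mathcal V$ is a fundamental domain for $W_X$ on $\mathcal Q$, this both completes (i) and reduces (ii) to showing that the $G(\mathfrak O)$-orbit of $\check\lambda(t)$ determines the $W_X$-orbit of $\check\lambda$.

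For the latter, a workable $G(\mathfrak O)$-invariant comes from the decomposition $k[X]=\bigoplus_\lambda V_\lambda$ into isotypic components (each a single copy of $V_\lambda$, as $X$ is spherical): for $\lambda$ in the finitely generated monoid of highest weights occurring, fix a $G(\mathfrak O)$-stable lattice $V_\lambda(\mathfrak O)\subset V_\lambda$ and set $c_\lambda(x):=\min\{\operatorname{val}f(x):f\in V_\lambda(\mathfrak O)\}$ for generic $x\in X^+(\mathcal K)$. Since $G(\mathfrak O)$ preserves the lattice, $c_\lambda$ is constant on orbits, and $c_\lambda(\check\lambda'(t))=\min\{\langle\nu,\check\lambda'\rangle:\nu\text{ an }A\text{-weight of }V_\lambda\text{ lying in }\varchi(X)\}$; the set of such $\nu$ is $W_X$-stable with convex hull the $W_X$-orbit polytope of $\lambda$ (a known convexity property of spherical highest-weight modules), so for $\check\lambda'\in\mathcal V$ this minimum is $\langle w_0^X\lambda,\check\lambda'\rangle$ ($w_0^X$ the longest element of $W_X$), and letting $\lambda$ range over a generating set of the monoid (which spans $\varchi(X)\otimes\QQ$) recovers $\check\lambda'$. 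Equivalently and more conceptually: a generic $x$ defines a valuation $v_x$ on $\CC(X^+)$ whose restriction to $\CC(X^+)^{(B)}/\CC^\times=\varchi(X)$ is a $G(\mathfrak O)$-invariant element of $\mathcal Q$, lying in $\mathcal V$ (hence in $\Lambda_X^+$ after (i)) and equal to $\check\lambda$ at $\check\lambda(t)$. This gives (ii). To summarize, all ingredients but one are off the shelf (Iwasawa, the local structure theorem, Knop's realization of $W_X$, the convexity fact); the genuine difficulty is the surjectivity step in (i): because $G(\mathfrak O)$ acts only on one side of $H\backslash G$, the conjugation tricks of the ordinary Cartan decomposition are unavailable, and one must degenerate the loop through a toroidal boundary — so that the classification of spherical embeddings (Theorems \ref{classemb}, \ref{affine}) is used essentially.
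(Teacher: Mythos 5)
The paper does not re-prove Theorem~\ref{Cstratification} itself (it is cited from Luna--Vust and Gaitsgory--Nadler); what it proves is the $p$-adic analogue, Theorem~\ref{stratification}, and your proposal should be compared against that argument. For surjectivity your underlying idea -- extend the arc into a complete toroidal embedding $\bar X$ by properness and then use the Brion--Luna--Vust local structure theorem to land in the toric slice $Y^c$ -- is the same as the paper's, but your write-up carries extra machinery that the paper does not need. The initial Iwasawa and local-structure-over-$\mathcal K$ steps do nothing for you: they place the point in $\mathring X^+(\mathcal K)$ but, as you note, leave you stuck with the unipotent factor, and the degeneration argument that follows starts over from scratch anyway. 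The ``recurse on $\operatorname{codim}\mathcal O$'' step is also unnecessary: the paper's version is one-shot -- take $\tilde x\in\bar X(\mathfrak O)$, reduce modulo $t$, use that the closed points of $\bar X_B$ meet every $G$-orbit (over $\CC$ this is immediate, since $G(\CC)$ is transitive on each $G(\CC)$-orbit and $\bar X_B$ meets every $G$-orbit), move by a constant element of $G(\mathfrak O)$ into $\bar X_B(\mathfrak O)\simeq Y^c(\mathfrak O)\times U_{P(X)}(\mathfrak O)$, and strip the $U_{P(X)}(\mathfrak O)$-factor; the representative in $Y^c(\mathfrak O)\cap A_X(\mathcal K)$ already has valuation in $\mathcal C(\bar X)\cap\Lambda_X=\Lambda_X^+$. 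For the same reason the $W_X$-symmetry paragraph is superfluous: the coweight lands in $\Lambda_X^+$ directly, with no need to fold by $W_X$. That claim -- that $\check\lambda(t)$ and $(w\check\lambda)(t)$ are $G(\mathfrak O)$-equivalent -- is also not an innocuous ``off the shelf'' fact; Knop's realization gives elements of $N(A)/A$ normalizing $W_{L(X)}$, but whether a lift of $w\in W_X$ carries $x_0\cdot\check\lambda(t)$ to $x_0\cdot(w\check\lambda)(t)$ inside $X^+(\mathcal K)$ needs an argument, and the paper avoids having to give one.

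For injectivity your route is genuinely different from the paper's. The paper separates $\check\lambda\ne\check\lambda'$ by constructing toroidal embeddings $X^t$ over $\mathfrak O$: if $\check\lambda,\check\lambda'$ are not $\QQ$-collinear it chooses $\mathcal C(X^t)$ containing $\check\lambda$ but not $\check\lambda'$, so that $\check\lambda(t)\in X^t(\mathfrak O)$ while $\check\lambda'(t)\notin X^t(\mathfrak O)$ (a manifestly $G(\mathfrak O)$-invariant distinction); for $\QQ$-collinear pairs it compares intersection numbers with a codimension-one boundary orbit. Your $c_\lambda$-invariants from the isotypic decomposition are an interesting alternative, but as written there is a gap in the evaluation $c_\lambda(\check\lambda'(t))=\min\{\langle\nu,\check\lambda'\rangle:\nu\text{ an }A\text{-weight of }V_\lambda\text{ in }\varchi(X)\}$. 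The restriction $V_\lambda|_{A_X}$ is governed not by which $A$-weights of $V_\lambda$ lie in $\varchi(X)$, but by which weight spaces have nonzero image under the restriction map to $k[A_X]$, i.e.\ contain a vector not vanishing at $x_0$; this is a strictly stronger condition, and you have not established that the two sets coincide. The subsequent ``known convexity property of spherical highest-weight modules'' (that the $\varchi(X)$-weights of $V_\lambda$ have convex hull exactly the $W_X$-orbit polytope of $\lambda$) is also asserted without a reference and is not a standard off-the-shelf fact; you would need to prove it, or at least pin it to a result of Brion. The ``equivalently and more conceptually'' reformulation has the same difficulty compressed: the valuation on $\CC(X^+)^{(B)}$ obtained from a single point $x$ is not obviously $G(\mathfrak O)$-invariant (a $B$-eigenfunction is not a $G$-eigenfunction); only the minimum over the full isotypic lattice is. Unless these points are filled in, the injectivity half of your argument does not close, whereas the paper's toroidal-embedding criterion closes immediately from Proposition~\ref{places} and Theorem~\ref{localstr}.
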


A new proof was given by Gaitsgory and Nadler in \cite{GN}, which can be used to prove the analogous statement over $p$-adic fields. We revisit their argument, adapt it to the $p$-adic case, and extend it to determine the set of $G(\mathfrak o_F)$-orbits on $X(\mathfrak o_F)$, when $G$ and $X$ are affine and defined over a number field and $F$ is a non-archimedean completion (outside of a finite set of places).

\begin{remark}
 In the case of symmetric spaces similar statements on the set of $G(\mathfrak o_F)$-orbits on $X(F)$ and in a more general setting -- without assuming that $G$ is split -- have been proven by Benoist and Oh \cite{BO}, Delorme and S\'echerre \cite{DS}.
\end{remark}

The argument uses compactification results of Brion, Luna and Vust. We first need to recall a few more elements of the theory of spherical varieties. The results below have appeared in the literature for $k$ an algebraically closed field in characteristic zero, but the proofs hold verbatim when $k$ is any field in characteristic zero and the groups in question are split over $k$. (The basic observation being, here, that in all proofs one gets to choose $B$-eigenfunctions in $k(X)$, and since the variety is spherical and the group is split the eigenspaces of $B$ are one-dimensional and defined over $k$, therefore the chosen eigenfunctions are $k$-rational up to $\bar k$-multiple.)

A \emph{toroidal embedding} of $X^+$ is an embedding $X^c$ of $X^+$ in which no color ($B$-stable divisor which is not $G$-stable) contains a $G$-orbit. Theorem \ref{classemb} implies that \emph{simple} toroidal embeddings are classified by strictly convex, finitely generated subcones of $\mathcal V$. Moreover, the simple toroidal embedding $X^c$ obtained from a simple embedding $X$ by taking the cone $\mathcal C(X^c)=\mathcal C(X)\cap \mathcal V$ comes with a proper equivariant morphism: $X^c\to X$ \cite[Theorem 4.1]{KnLV} which is surjective \cite[Lemma 3.2]{KnLV}.

The local structure of a simple toroidal embedding is given by the following theorem of Brion, Luna and Vust:
\begin{theorem}[{\cite[Th\'eor\`eme 3.5]{BLV}}]\label{localstr}
 Let $X^c$ be a simple toroidal embedding of $X^+$ and let $X^c_B$ denote the complement of all colors. Then $X^c_B$ is an open, $P(X)$-stable, affine variety with the following properties:
\begin{enumerate}
 \item $X_B^c$ meets every $G$-orbit.
 \item If we let $Y^c$ be the closure of $A_X$ in $X_B^c$, then the action map $Y^c\times U_{P(X)}\to X^c_B$ is an isomorphism.
\end{enumerate}
\end{theorem}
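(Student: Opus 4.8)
The statement is the Brion--Luna--Vust local structure theorem, and the plan is to follow their argument in three steps: first the soft topological properties of $X^c_B$, then its affineness, then the product decomposition along the orbit map of $U_{P(X)}$. The soft properties are essentially formal. By definition $X^c_B=X^c\smallsetminus\bigcup_{D\in\mathcal D}\bar D$ is open, and, being the complement of a union of $B$-stable prime divisors, it is $B$-stable. It is $P(X)$-stable because $P(X)$ is by its very definition the stabilizer of $\mathring X^+$, and therefore permutes the irreducible components of $X^+\smallsetminus\mathring X^+$ -- the colors -- hence stabilizes their union of closures $\bigcup_D\bar D$ in the $G$-variety $X^c$, hence stabilizes $X^c_B$. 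For property (1): each $G$-orbit $O$ in $X^c$ is a homogeneous spherical variety, so it has a dense open $B$-orbit $\mathring O$; the toroidal hypothesis says that no $\bar D$ contains $O$, so $\bar D\cap O$ is a \emph{proper} $B$-stable closed subset of $O$ and therefore cannot meet the single $B$-orbit $\mathring O$; thus $\mathring O\subseteq X^c_B$, so $X^c_B$ meets $O$. (Along the way one records that $X^c_B\cap X^+=\mathring X^+$.)

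Next I would dispose of the open orbit, i.e.\ the case $X^c=X^+$: this is exactly the classical local structure theorem for homogeneous spherical varieties, which says that the multiplication map $A_X\times U_{P(X)}\to\mathring X^+$ is an isomorphism. Writing $B=(B\cap L)\,U_{P(X)}$ with $L=L(X)$ the chosen Levi, the defining property of the ``good'' Levi forces $x_0\cdot(B\cap L)=x_0\cdot L=A_X$, so $\mathring X^+=x_0B=A_X\cdot U_{P(X)}$; the map is then bijective (using $H\cap U_{P(X)}=1$, a dimension count) and, since $\mathring X^+$ is smooth and we are in characteristic zero, an isomorphism. In particular $A_X=x_0\cdot L$ is an $L$-orbit, so its closure $Y^c$ in $X^c_B$ is an $L$-stable closed subvariety.

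The remaining content is that $X^c_B$ is affine and that the decomposition just found propagates over all of $X^c_B$. For affineness, the strategy is to use that a \emph{simple} spherical embedding is quasiprojective and carries a $G$-linearized ample line bundle, and to extract from its sections a $B$-eigensection whose divisor of zeros is supported exactly on $\bigcup_D\bar D$, so that $X^c_B$ is its non-vanishing locus and hence affine; equivalently, once the product structure is in place one sees that $X^c_B\cong Y^c\times U_{P(X)}$ with $Y^c$ the \emph{affine} toric $A_X$-variety attached to the single strictly convex cone $\mathcal C(X^c)\subseteq\mathcal V$ (this is where ``simple'' enters: the colored cone of Theorem \ref{classemb} is one strictly convex cone, not a fan). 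To propagate the decomposition, I would show that $U_{P(X)}$ acts freely on $X^c_B$: it does on the dense open $\mathring X^+$, and toroidality -- which forces every $B$-stable boundary divisor meeting $X^c_B$ to be $G$-stable, so that the local analysis on $\mathring X^+$ applies verbatim along each stratum $O\cap X^c_B$ -- prevents isotropy from appearing along the boundary. Since $U_{P(X)}$ is a split unipotent group acting freely on the affine $X^c_B$, the quotient map $X^c_B\to X^c_B/U_{P(X)}$ is a trivial torsor, $Y^c=\overline{A_X}$ maps isomorphically onto the base $X^c_B/U_{P(X)}$, and composing the inverse with the action map yields $Y^c\times U_{P(X)}\xrightarrow{\sim}X^c_B$.

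The soft properties and the open-orbit computation are routine; the step I expect to be the main obstacle -- and the only place where one genuinely \emph{uses}, rather than merely cites, that the embedding is both simple and toroidal -- is the pair of claims (i) that $X^c_B$ is affine (which, as indicated, amounts to $Y^c$ being the affine toric variety of a single strictly convex cone, i.e.\ to simplicity) and (ii) that the $U_{P(X)}$-action stays free out to the closed $G$-orbit, which is what makes $\overline{A_X}$ a \emph{global} cross-section and is controlled by the toroidal condition on the boundary divisors.
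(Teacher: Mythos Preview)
The paper does not prove this theorem: it is stated with an explicit citation to \cite[Th\'eor\`eme 3.5]{BLV} and no proof is given, so there is nothing in the paper to compare your argument against. What you have written is a reasonable outline of the classical Brion--Luna--Vust argument, and the soft parts (openness, $P(X)$-stability, property (1) via the toroidal hypothesis) are fine as stated.

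That said, two places in your sketch would need real work if this were to stand as a proof rather than a plan. First, your affineness argument has a whiff of circularity: you offer as an ``equivalent'' route that once the product structure is in place $X^c_B\cong Y^c\times U_{P(X)}$ is visibly affine, but the product structure is exactly what you are trying to establish, and your torsor argument for it already \emph{assumes} $X^c_B$ is affine. The honest route is the one you mention first --- produce a $B$-eigensection of a $G$-linearized ample line bundle whose zero divisor is exactly $\sum_{D\in\mathcal D}n_D\bar D$ --- and that step, while standard, is not free. Second, the assertion that $U_{P(X)}$ acts freely on all of $X^c_B$ (not just on $\mathring X^+$) is precisely the hard point of the BLV theorem; saying that ``toroidality \ldots\ prevents isotropy from appearing along the boundary'' and that ``the local analysis on $\mathring X^+$ applies verbatim along each stratum'' is a restatement of the desired conclusion, not an argument. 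In the original this is handled by a genuine slice construction (via the moment-map/coadjoint picture that also produces the good Levi $L(X)$), and you would need to reproduce that.
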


We emphasize the structure of the affine toric variety $Y^c$: Its cone of regular characters is precisely $\mathcal C(X^c)^\vee:=\{\chi\in \varchi(X)\otimes\QQ| \left<\chi,v\right>\ge 0\text{ for all }v\in \mathcal C(X^c)\}$, in other words: $$Y^c=\spec k[\mathcal C(X^c)^\vee\cap \varchi(X)].$$ By the theory of toric varieties, the theorem also implies that $X^c$ is smooth if and only if the monoid $\mathcal C(X^c)\cap \Lambda_X$ is generated by primitive elements in its ``extremal rays'' (i.e.\ is a free abelian monoid).

Notice that when $\mathcal V$ is strictly convex (equivalently: $\Aut^G(X^+)$ is finite) then $X^+$ admits a canonical toroidal embedding $\bar X$, with $\mathcal C(\bar X)=\mathcal V$, which is complete. This is sometimes called the \emph{wonderful completion} of $X^+$, although often the term ``wonderful'' is reserved for the case that this completion is smooth. If $\mathcal V$ is not strictly convex then $X^+$ still admits a (non-unique) complete toroidal embedding $\bar X$, which is not simple, but as remarked in \cite[8.2.7]{GN} Theorem \ref{localstr} still holds, with $Y^c$ a suitable (non-affine) toric variety containing $A_X$. The fan of $Y^c$ depends on the chosen embedding $\bar X$, but its support is precisely the dual cone of $\mathcal V$ (i.e.\ the set of cocharacters $\lambda$ of $A_X$ such that $\lim_{t\to 0}\lambda(t)\in Y^c$ is equal to $\Lambda_X^+$).

We will use Theorem \ref{localstr} for two toroidal varieties: First, for a complete toroidal embedding $\bar X$ of $X^+$. Secondly, for the variety $\hat X$ obtained from our affine spherical variety $X$ by taking $\mathcal C(\hat X)=\mathcal C(X)\cap\mathcal V$. Before we proceed, we discuss models of these varieties over rings of integers.

\subsubsection{Models over rings of integers}

We start with toric varieties. Let $\mathfrak o$ be an integral domain with fraction field $k$, and let $Y$ be a simple (equivalently, affine) toric variety for a split torus $T$ over $k$. We endow $T$ with its smooth model $\mathcal T=\mathfrak o[\varchi(T)]$ over $\mathfrak o$. Since $Y=\spec k[M]$ for some saturated monoid $M\subset\varchi(T)$, the $\mathfrak o$-scheme $\mathcal Y=\spec \mathfrak o[M]$ is a model for $Y$ over $\mathfrak o$ with an action of $\mathcal T$, and we will call it the \emph{standard model}. The notion easily extends to the case where $Y$ is not necessarily affine, but defined by a fan. If $T$ and $Y$ are defined over a number field $k$ and endowed with compatible models over the $S$-integers $\mathfrak o_S$ for a finite set $S$ of places of $k$, then these models will coincide with the standard models over $\mathfrak o_{S'}$, for some finite $S'\supset S$.

Now we return to the setting where $k$ is a number field, $G$, $X$, $X^+$, $\bar X$, $\hat X$ are as before (over $k$), and let us also fix a point $x_0\in \mathring X^+(k)$. Then we can choose compatible integral models outside of a finite set of places, such that the structure theory of Brion, Luna and Vust continues to hold for these models:
\begin{proposition}\label{places}
 There are a finite set of places $S_0$ of $k$ and compatible flat models $\mathcal G$, $\mathfrak X$ $\mathfrak{\bar X}$ and $\mathfrak{\hat X}$ for $G$, $X$ $\bar X$ and $\hat X$ over the $S_0$-integers $\mathfrak o_{S_0}$ of $k$ such that:
\begin{itemize}
 \item $S_0$ contains all archimedean places;
 \item the chosen point $x_0 \in  \mathcal{\mathring X^+}(\mathfrak o_{S_0})$;
 \item $\mathcal  G$ is reductive over $\mathfrak o_{S_0}$, $\mathcal {X^+}\to \spec\mathfrak o_{S_0}$ is smooth and surjective; 
 \item the statement of Theorem \ref{localstr} holds for $\mathfrak{\bar X}$ and $\mathfrak{\hat X}$ over $\mathfrak o_{S_0}$: namely, if we denote any one of them by $\mathfrak X^c$ then there is an open, $\mathcal P(X)$-stable subscheme $\mathfrak X^c_B$ and a toric $\mathcal A$-subscheme $\mathcal Y^c$ \emph{of standard type} such that the subscheme $\mathfrak X^c_B$ meets every $\mathcal G$-orbit on $\mathfrak X^c$ and the action map: $\mathcal Y^c\times \mathcal U_{P(X)}\to \mathfrak X^c_B$ is an isomorphism of $\mathfrak o_{S_0}$-schemes.
 \item $\mathfrak{\bar X}$ is proper over $\mathfrak o_{S_0}$, and the morphism $\mathfrak{\hat X}\to \mathfrak X$ is proper.
\end{itemize}
\end{proposition}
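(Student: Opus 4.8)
The plan is to produce all the asserted models by a standard "spreading out" argument, making the finite set $S_0$ large enough at each step, and to organize the argument so that each successive requirement only forces us to enlarge $S_0$ by finitely many more places. First I would choose, once and for all, finitely generated $\mathfrak o_{S_0}$-algebras/schemes $\mathcal G$, $\mathfrak X$, $\mathfrak{\bar X}$, $\mathfrak{\hat X}$ realizing the generic-fiber objects $G$, $X$, $\bar X$, $\hat X$: for $G$ use a Chevalley model (it is reductive over $\ZZ$, hence over $\mathfrak o_{S_0}$ for any $S_0$); for $X$ pick an embedding into a finite sum of representations of $G$ as in the discussion after Proposition \ref{structureaffine} and take the Zariski closure with its reduced structure; for $\bar X$ and $\hat X$ use the explicit combinatorial construction of Luna–Vust (glue the affine charts $X^c_B \cong Y^c\times U_{P(X)}$ of Theorem \ref{localstr} together with their toric translates), all of which is defined over $k$ since $G$ is split and the relevant $B$-eigenfunctions are $k$-rational, as remarked before Theorem \ref{localstr}. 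After inverting finitely many denominators the gluing data, the $\mathcal G$-action, the open embedding $\mathcal X^+\hookrightarrow \mathfrak X$, and the map $\mathfrak{\hat X}\to\mathfrak X$ are all defined over $\mathfrak o_{S_0}$.

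Next I would enforce the listed properties one at a time, each time enlarging $S_0$. Including all archimedean places costs nothing. That $x_0\in\mathring{\mathcal X}^+(\mathfrak o_{S_0})$ holds after inverting the finitely many primes in the denominators of the coordinates of $x_0$ and the finitely many primes at which $x_0$ specializes into a bad locus; since $\mathring X^+$ is open in $X$, enlarging $S_0$ further ensures $x_0$ lands in the open subscheme $\mathring{\mathcal X}^+$. Reductivity of $\mathcal G$ is automatic for the Chevalley model. Smoothness and surjectivity of $\mathcal X^+\to\spec\mathfrak o_{S_0}$: by generic flatness (EGA IV), $\mathcal X^+\to\spec\mathfrak o_{S_0}$ is flat after inverting finitely many primes, and then $\{x\in\mathcal X^+: \mathcal X^+\to\spec\mathfrak o_{S_0}\text{ smooth at }x\}$ is open with full generic fiber, so its complement maps to a proper closed subset of $\spec\mathfrak o_{S_0}$, i.e. is contained in finitely many closed fibers, which we delete; surjectivity then follows since a flat morphism of finite type with nonempty fibers over a dense open is surjective after possibly inverting more primes. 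For the local-structure statement I would spread out the isomorphism $Y^c\times U_{P(X)}\xrightarrow{\sim} X^c_B$ of Theorem \ref{localstr}: an isomorphism of finite-type $k$-schemes extends to an isomorphism of chosen models after inverting finitely many primes (the inverse morphism and the two composites being identities are each finitely presented conditions), and the toric chart acquires its standard model $\mathcal Y^c=\spec\mathfrak o_{S_0}[\mathcal C(X^c)^\vee\cap\varchi(X)]$ by the discussion of standard models of toric varieties above. The statement "$\mathfrak X^c_B$ meets every $\mathcal G$-orbit" is, fiberwise, the statement that $\mathfrak X^c_B$ meets every $\mathcal G$-orbit on each geometric fiber; since this holds on the generic fiber and the locus where it fails is constructible and misses the generic point, it is contained in finitely many closed fibers, which we delete. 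Finally, properness of $\mathfrak{\bar X}\to\spec\mathfrak o_{S_0}$ and of $\mathfrak{\hat X}\to\mathfrak X$: properness is constructible in the base (again EGA IV: the locus of the base over which a finite-type separated morphism is proper is open, or one checks the valuative criterion spreads out), and both maps are proper on the generic fiber by the cited Luna–Vust theory (completeness of $\bar X$; the morphism $\hat X\to X$ coming from $\mathcal C(\hat X)=\mathcal C(X)\cap\mathcal V$ is proper by \cite[Theorem 4.1]{KnLV}), so after deleting finitely many more closed fibers we are done; then one enlarges $S_0$ once more so that all the finitely many exceptional sets above are simultaneously avoided, and so that the various chosen models agree with the "standard" ones where that is asserted.

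I expect the only genuinely delicate point is making the \emph{local-structure} bullet hold integrally, i.e. simultaneously spreading out the toroidal chart decomposition $\mathfrak X^c_B\cong\mathcal Y^c\times\mathcal U_{P(X)}$ and the compatibility of the glued model $\mathfrak X^c$ with these charts across all $G$-orbits. Concretely one must check that the finitely many affine charts of $\bar X$ (resp.\ $\hat X$) coming from the cones in the fan glue over $\mathfrak o_{S_0}$ to a scheme whose $B$-chart is exactly $\mathcal Y^c\times\mathcal U_{P(X)}$, and that the toric model $\mathcal Y^c$ is of standard type — the latter is where one uses that the relevant monoids $\mathcal C(X^c)^\vee\cap\varchi(X)$ are finitely generated and saturated, so that $\spec\mathfrak o_{S_0}[\mathcal C(X^c)^\vee\cap\varchi(X)]$ behaves well. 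All of this is "spreading out of a finite diagram", so no new geometric input beyond the characteristic-zero theory is needed; the content is bookkeeping to see that only finitely many primes are excluded. Everything else is a routine application of the principle that any finite list of properties of a finite-type morphism which hold on the generic fiber holds on all but finitely many closed fibers, so $S_0$ can be taken to absorb all of them.
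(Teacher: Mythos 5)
Your proof is correct and follows essentially the same spreading-out strategy as the paper. The only cosmetic difference worth noting: the paper starts from an arbitrary flat (and, for $\bar X$, proper) model $\mathfrak X^c$ and then \emph{defines} $\mathfrak X^c_B$ intrinsically as the complement of the closure of the union of colors (so that openness and $\mathcal P(X)$-stability are automatic), letting $\mathcal Y^c$ be the closure of $Y^c$ inside it; you instead build $\mathfrak X^c$ by gluing translates of the chart $Y^c\times U_{P(X)}$. Both are fine, and the rest of the argument (make the chart cover all orbits because the action-map image is open and contains the generic fiber, spread out the isomorphism $\mathcal Y^c\times\mathcal U_{P(X)}\xrightarrow{\sim}\mathfrak X^c_B$ after ensuring $\mathcal Y^c$ is standard, get flatness/smoothness/properness by deleting finitely many fibers) matches the paper.
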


\begin{remarks}\begin{enumerate}\item
By $\mathfrak{X^+}$ (resp.\ $\mathfrak{\mathring X^+}$) we denote the complement of the closure, in any of the above schemes, of the complement of $X^+$ (resp.\ $\mathring X^+$) in the generic fiber. 
\item It is implicitly part of the ``compatibility'' of the models that the scheme structures on $\mathfrak{X^+}, \mathfrak{\mathring X^+}$ do not depend on which of the ambient schemes we choose to define them. 
\item We understand the statement ``meets every orbit'' as follows: Let $|\mathcal Z|$ denote the set of scheme-theoretic points of a scheme $\mathcal Z$. Consider the two maps: $p:\mathcal G\times \mathfrak X\to \mathfrak X$ (projection to the second factor) and $a: \mathcal G\times \mathfrak X\to \mathfrak X$ (action map). Then for every $x\in |\mathfrak X^c|$ the set $a(p^{-1}\{x\})$ intersects $|\mathfrak X^c_B|$ non-trivially.\end{enumerate}
\end{remarks}

\begin{proof}
  For a finite set $S$ of places and a flat model $\mathfrak X^c$ of $X^c$ over $\mathfrak o_S$ (assumed proper if $X^c=\bar X$), let $D$ denote the union of all colors over the generic point of $\spec \mathfrak o_S$, let $\mathfrak D$ denote the closure of $D$ in $\mathfrak X^c$ and let $\mathfrak X_B^c$ be the complement of $\mathfrak D$ in $\mathfrak X^c$. Let $\mathcal G$ denote a compatible reductive model for $G$ over $\mathfrak o_S$. (All these choices are possible by sufficiently enlarging $S$.) The image of $\mathcal G\times \mathfrak X^c_B\to \mathfrak X^c$ is open and contains the generic fiber, hence by enlarging the set $S$, if necessary, we can make it surjective.

 Now define $\mathcal Y^c$ as the closure of $Y^c$ in $\mathfrak X^c_B$. By enlarging the set $S$, if necessary, we may assume that $\mathcal Y^c$ is of standard type. The action map $\mathcal Y^c\times \mathcal U_{P(X)}\to \mathfrak X_B^c$ being an isomorphism over the generic fiber, it is an isomorphism over $\mathfrak o_S$ by enlarging $S$, if necessary.
\end{proof}

From now on we fix such a finite set of places $S_0$ and such models. The combinatorial invariants of the above schemes are the same at all places of $S_0$:

\begin{proposition}\label{samedata}
 Each of the data\footnote{Since $\bar X$ is not necessarily simple, it is not described by a cone but by a fan. However, we slightly abuse the common notation here and write $\mathcal C(\bar X)$ for the set of invariant valuations whose center is in $\bar X$ -- i.e.\ for the support of the fan associated to $\bar X$.} $\varchi(X), \mathcal V, \mathcal C(X), \mathcal C(\bar X), \mathcal C(\hat X)$ is the same for the reductions of $\mathfrak X, \mathfrak{\bar X}, \mathfrak{\hat X}$  at all closed points of $\mathfrak o_{S_0}$. The set of $G$-orbits on each of these varieties is in natural bijection with the set of $\mathcal G$-orbits on each of their reductions.
\end{proposition}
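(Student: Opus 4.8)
The plan is to recover every invariant in the list, and the entire orbit structure of each reduction, from the combinatorics of the standard-type toric schemes $\mathcal Y^c$ supplied by the local structure theorem over $\mathfrak o_{S_0}$ (Proposition \ref{places}); this combinatorics is literally the same in every fibre, so the only real work is a spreading-out argument making sure that each special fibre is a spherical variety of the expected kind, after which all the identifications are formal.

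First I would show that for every closed point $\mathfrak p$ of $\mathfrak o_{S_0}$ the fibre $\mathfrak{X^+}_\mathfrak p$ is a spherical homogeneous space for the split group $\mathcal G_\mathfrak p$, and that $\mathfrak X_\mathfrak p$, $\mathfrak{\bar X}_\mathfrak p$, $\mathfrak{\hat X}_\mathfrak p$ are respectively a simple affine, a complete toroidal, and a simple toroidal embedding of it. Inside the subscheme $\mathfrak X^c_B\cong\mathcal Y^c\times\mathcal U_{P(X)}$ of $\mathfrak{\hat X}$ the open $B$-orbit $\mathring X^+$ is the product of the open torus orbit $\mathcal A_X\subset\mathcal Y^c$ with $\mathcal U_{P(X)}$, so each $\mathfrak{\mathring X^+}_\mathfrak p$ is a single $\mathcal B_\mathfrak p$-orbit, non-empty since $x_0\in\mathfrak{\mathring X^+}(\mathfrak o_{S_0})$; choosing $S_0$ large enough (a harmless enlargement I would fold into Proposition \ref{places}) it is dense in the irreducible variety $\mathfrak{X^+}_\mathfrak p$, so $\mathfrak{X^+}_\mathfrak p$ is spherical. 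Normality and affineness of $\mathfrak X_\mathfrak p$, properness of $\mathfrak{\bar X}_\mathfrak p$, and toroidality of $\mathfrak{\bar X}_\mathfrak p$, $\mathfrak{\hat X}_\mathfrak p$ follow from the corresponding properties over $\mathfrak o_{S_0}$, the last because the complement $\mathfrak X^c_B$ of the colour closures meets every $\mathcal G_\mathfrak p$-orbit, so no colour contains an orbit.

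Next I would read off the invariants. The lattice $\varchi(X)$ of $\mathfrak{X^+}_\mathfrak p$ is the character lattice of the torus of $\mathcal Y^c$, the same $\mathfrak o_{S_0}$-torus in all fibres. The colours of $\mathfrak{X^+}_\mathfrak p$ are the reductions $\mathfrak D_{i,\mathfrak p}$ of the finitely many colours $D_i$ of $X^+$ --- a prime $B$-divisor of $\mathfrak{X^+}_\mathfrak p$ cannot meet the open orbit, hence lies in $\bigcup_i\mathfrak D_{i,\mathfrak p}$, and for $S_0$ large these divisors are prime, distinct and not $\mathcal G$-stable --- and $\rho(\mathfrak D_{i,\mathfrak p})=\rho(D_i)$ in $\mathcal Q$ because $\mathfrak D_i$ is the flat closure of $D_i$. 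The cones $\mathcal C(\bar X)$ and $\mathcal C(\hat X)$ are by Theorem \ref{localstr} the fan supports of the $\mathcal Y^c$ attached to $\mathfrak{\bar X}$, resp.\ $\mathfrak{\hat X}$, again fixed over $\mathfrak o_{S_0}$; and $\mathcal V=\mathcal C(\bar X)$, since a complete toroidal embedding has fan support equal to the valuation cone (cf.\ the discussion preceding Proposition \ref{places}), applied to $\mathfrak{\bar X}_\mathfrak p$ over the split $\mathcal G_\mathfrak p$. Finally $\mathcal C(X)$ is determined by $\mathcal V$, $\varchi(X)$, $\mathcal D$ and $\rho|_{\mathcal D}$ through Theorem \ref{affine} and the affine-closure description --- equivalently $\mathcal C(X)\cap\mathcal V=\mathcal C(\hat X)$ and $\mathcal F(X)=\mathcal F(\hat X)$ are constant and generate it --- so it too is the same in all fibres.

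For the orbit statement I would argue that on the toroidal models every $\mathcal G_\mathfrak p$-orbit meets $\mathfrak X^c_{B,\mathfrak p}\cong\mathcal Y^c_\mathfrak p\times\mathcal U_{P(X)}$ in a single $\mathcal P(X)_\mathfrak p$-orbit, and that under the isomorphism the $\mathcal P(X)_\mathfrak p$-orbits are exactly the products (an $\mathcal A_{X,\mathfrak p}$-orbit on $\mathcal Y^c_\mathfrak p$)$\,\times\,\mathcal U_{P(X)}$; since the torus-orbit/cone dictionary of the standard-type scheme $\mathcal Y^c$ is the same in every fibre, this gives a bijection of the $\mathcal G_\mathfrak p$-orbits on $\mathfrak{\bar X}_\mathfrak p$ and on $\mathfrak{\hat X}_\mathfrak p$ with the cones of a fixed fan, compatible with the generic fibre. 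For $\mathfrak X_\mathfrak p$, a simple embedding, its $\mathcal G_\mathfrak p$-orbits biject with the faces of the constant coloured cone of $X$; alternatively one transports orbits along the proper surjection $\mathfrak{\hat X}_\mathfrak p\to\mathfrak X_\mathfrak p$ using the combinatorics of morphisms of spherical embeddings (\cite[Theorem 4.1]{KnLV}), which is $\mathfrak p$-independent. The resulting natural bijection sends a $G$-orbit $O$ of the generic fibre to the open $\mathcal G_\mathfrak p$-orbit of the special fibre of its closure $\overline O$. The hard part here is not conceptual but the uniform spreading-out: one must take the single finite set $S_0$ of Proposition \ref{places} large enough that, \emph{simultaneously} at every closed point, the models stay normal with normal fibres, the fibres of $\mathfrak{X^+}$ stay irreducible, the colour closures reduce to distinct non-$\mathcal G$-stable prime divisors, and the local-structure isomorphisms remain isomorphisms on fibres --- each a routine constructibility statement, but all needed at once.
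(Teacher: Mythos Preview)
Your approach is essentially the same as the paper's: both extract every invariant from the local-structure isomorphism $\mathfrak X^c_B\cong\mathcal Y^c\times\mathcal U_{P(X)}$ over $\mathfrak o_{S_0}$, using that the standard-type toric scheme $\mathcal Y^c$ has identical combinatorics in every fibre, then deduce $\mathcal V$ from completeness of $\mathfrak{\bar X}_\mathfrak p$.

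Two points of comparison worth noting. First, the paper's proof is terser than yours and in fact does not explicitly address either $\mathcal C(X)$ or the orbit bijection; your recovery of $\mathcal C(X)$ via Theorem~\ref{affine} (or equivalently from $\mathcal C(\hat X)=\mathcal C(X)\cap\mathcal V$ together with the constant colour data) and your orbit argument via the $\mathcal A_X$-orbit/cone dictionary on $\mathcal Y^c$ are genuine additions that the paper leaves implicit. Second, for toroidality of the special fibres the paper argues a specific point you do not isolate: it shows that $(\mathfrak X^c_B)_{\FF_v}$ meets \emph{no} colour of $\mathfrak X^c_{\FF_v}$ by observing that otherwise a non-open $\mathcal A_{\FF_v}$-orbit on $\mathcal Y^c_{\FF_v}$ would lie in the open $\mathcal G_{\FF_v}$-orbit, forcing that orbit into the closure of a non-open generic-fibre orbit and contradicting smoothness and surjectivity of $\mathfrak X^+\to\spec\mathfrak o_{S_0}$. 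Your route to the same conclusion --- identifying the colours of the special fibre with the reductions of the generic-fibre colours via a spreading-out argument --- is correct but requires the additional constructibility bookkeeping you flag at the end, whereas the paper's contradiction uses only what Proposition~\ref{places} already provides.
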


\begin{proof}
The toric scheme $\mathcal Y^c$ being of the standard type, it means that $\varchi(X)=\varchi_A(Y^c)$ is the same at all reductions. For every place $v$ of $\mathfrak o_S$ the reductions $\mathfrak{\bar X}_{\FF_v}$, $\mathfrak{\hat X}_{\FF_v}$ are toroidal: Indeed, denoting by $\mathfrak X^c$ either of them, the complement of $(\mathfrak X_B^c)_{\FF_v}$ is a $\mathcal B_{\FF_v}$-stable union of divisors which does not contain any $\mathcal G_{\FF_v}$-orbit, since $(\mathfrak X_B^c)_{\FF_v}$ meets every $\mathcal G_{\FF_v}$-orbit. Moreover, $\mathfrak X_B^c$ meets no colors: for if it did, then a non-open $\mathcal A_{\FF_v}$-orbit on $\mathcal Y^c_{\FF_v}$ would belong to the open $\mathcal G_{\FF_v}$-orbit, and hence the open $\mathcal G_{\FF_v}$-orbit would belong to the closure of a non-open $G$-orbit over the generic point, a contradiction since by assumption $\mathfrak X^+$ is smooth and surjective. Therefore, the complement of $(\mathfrak X_B^c)_{\FF_v}$ is the union of all colors of $\mathfrak X^c_{\
FF_v}$, and $\mathfrak X^c_{\FF_v}$ is toroidal. Moreover, the $\mathcal G_{\FF_v}$-invariant valuations on ${\FF_v}(\mathfrak{X^+}_{\FF_v})$ whose center is in $\mathfrak X^c_{\FF_v}$ are precisely those of $\Lambda_X\cap \mathcal C(X^c)$ (which proves the equality of $\mathcal C(\mathfrak X^c_{\FF_v})$ with $\mathcal C(X^c)$ at all $v\notin S_0$), and from the fact that $\mathfrak{\bar X}_{\FF_v}$ is complete and $\mathcal C(\mathfrak{\bar X}_{\FF_v})=\Lambda_X^+$ it follows that $\mathcal V$ is precisely the cone of invariant valuations on $\FF_v(\mathfrak{X^+})$.
\end{proof}

Now we are ready to apply the argument of \cite[Theorem 8.2.9]{GN} to describe representatives for the set of $\mathcal G(\mathfrak o_F)$-orbits on $\mathfrak{X^+}(\mathfrak o_F)$, for every completion $F$ of $k$ outside of $S_0$, and also extend it to a description of the set of orbits which are contained in $\mathfrak X(\mathfrak o_F)$. Notice that since $\mathcal G$ is reductive,  $\mathcal G(\mathfrak o_F)$ is a hyperspecial maximal compact subgroup of $G(F)$. From now on we denote our fixed models over $\mathfrak o_{S_0}$ by regular script, since there will be no possibility of confusion. There is a canonical $A_X(\mathfrak o_F)$-invariant homomorphism: $A_X(F)\to \Lambda_X$ (under which an element of the form $\lambda(\varpi)$, where $\varpi$ is a uniformizer for $F$, maps to $\lambda$) and we denote by $A_X(F)^+$ the preimage of $\Lambda_X^+$.

\begin{theorem}\label{stratification}
 For $F$ a completion of $k$ outside of $S_0$ each $G(\mathfrak o_F)$-orbit on $X^+(F)$ contains an element of $A_X(F)^+$, and elements of $A_X(F)^+$ with different image in $\Lambda_X^+$ belong to distinct $G(\mathfrak o_F)$-orbits. If the quotient $\varchi(A)/\varchi(X)$ is torsion-free then the map from $G(\mathfrak o_F)$-orbits on $X^+(F)$ to $\Lambda_X^+$ is a bijection. The orbits contained in $X(\mathfrak o_F)$ are precisely those mapping to $\Lambda_X^+\cap \mathcal C(X)$.
\end{theorem}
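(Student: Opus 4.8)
The plan is to reduce an arbitrary point of $X^+(F)$ to the ``toric part'' $A_X$ by means of the complete toroidal model $\mathfrak{\bar X}$ and its local structure, to separate the resulting cocharacters with a family of $G(\mathfrak o_F)$-invariant functions built from the $G$-isotypic pieces of $k[X]$, and finally to treat the integrality statement by replacing $\mathfrak{\bar X}$ with the toroidal model $\mathfrak{\hat X}$ that is proper over $\mathfrak X$. \emph{Existence of representatives.} Given $F\notin S_0$ and $x\in X^+(F)$, I would first regard $x$ as an $F$-point of $\mathfrak{\bar X}$ and, since $\mathfrak{\bar X}$ is proper over $\mathfrak o_{S_0}$, extend it to $\tilde x\in\mathfrak{\bar X}(\mathfrak o_F)$. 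The action map $\mathcal G\times\mathfrak{\bar X}_B\to\mathfrak{\bar X}$ is smooth (it is the restriction to an open subscheme of the composite of the shear isomorphism $(g,z)\mapsto(g,zg)$ with a projection) and, by Proposition \ref{places}, surjective; its fibre over $\tilde x_{\FF_v}$ is the non-empty open subscheme $\{g\in\mathcal G_{\FF_v}\mid \tilde x_{\FF_v}g^{-1}\in(\mathfrak{\bar X}_B)_{\FF_v}\}$ of $\mathcal G_{\FF_v}$. After enlarging $S_0$ once more so that these transporters (a constructible family over $\mathfrak{\bar X}$) all acquire $\FF_v$-points by Lang--Weil, I would lift such a point to $g\in\mathcal G(\mathfrak o_F)=G(\mathfrak o_F)$ by formal smoothness and replace $x$ by $xg^{-1}$, so that $\tilde x$ now factors through $\mathfrak{\bar X}_B\cong\mathcal Y^c\times\mathcal U_{P(X)}$. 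Writing $\tilde x=(\tilde y,\tilde u)$ and acting further by $\tilde u^{-1}\in\mathcal U_{P(X)}(\mathfrak o_F)\subset G(\mathfrak o_F)$ reduces to $\tilde x=(\tilde y,1)\in\mathcal Y^c(\mathfrak o_F)$, whose generic fibre lies in $Y^c\cap X^+=A_X$ and is therefore of the form $x_0\cdot a$ with $a\in A_X(F)$; since $\mathcal Y^c$ is a standard toric model with fan supported on $\mathcal V$, an $\mathfrak o_F$-point of $\mathcal Y^c$ with generic fibre in the open torus has cocharacter $\mathrm{val}(a)\in\mathcal V\cap\Lambda_X=\Lambda_X^+$. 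This proves the first assertion, and surjectivity of the map from orbits to $\Lambda_X^+$ is then immediate, $\check\lambda(\varpi)$ having cocharacter $\check\lambda$.

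\emph{Distinctness and bijectivity.} For the second assertion I would manufacture $G(\mathfrak o_F)$-invariants: decompose the (multiplicity-free) $\mathcal G$-module $\mathcal O(\mathfrak X)(\mathfrak o_F)=\bigoplus_\lambda L_\lambda$ into $\mathcal G(\mathfrak o_F)$-stable lattices $L_\lambda$ in the irreducible isotypic pieces $V_\lambda\subset k[X]$, and set $d_\lambda(x)=\min_{0\ne f\in L_\lambda}v(f(x))$; this is finite ($L_\lambda$ has finite rank) and invariant under $G(\mathfrak o_F)$ since $gL_\lambda=L_\lambda$. A weight-space computation on $\mathring X^+\cong A_X\times U_{P(X)}$ shows that $d_\lambda(x_0\cdot a)$ depends only on $\mathrm{val}(a)$ and equals the minimum of $\langle\xi,\mathrm{val}(a)\rangle$ over the finite set of weights $\xi\in\varchi(X)$ of $V_\lambda$ that restrict non-trivially to $A_X$ (the highest weight $\bar\lambda$ always being among them). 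As $\lambda$ ranges over a set of generators of the monoid of highest weights occurring in $k[X]$ — whose restrictions generate $\varchi(X)$ as a group — the family $(d_\lambda)$ separates the points of $\Lambda_X^+$, which gives the second assertion and makes the surjection from orbits to $\Lambda_X^+$ well defined. If $\varchi(A)/\varchi(X)\cong\varchi(A\cap H)$ is torsion-free, then $A\cap H$ is a split torus, so $A(\mathfrak o_F)\twoheadrightarrow A_X(\mathfrak o_F)$ and $X_*(A)\twoheadrightarrow\Lambda_X$; writing a representative as $x_0\cdot(u\,\check\lambda(\varpi))$ with $u\in A_X(\mathfrak o_F)$ and lifting both $u$ and $\check\lambda$ to $A$, one sees it is the image of $x_0\cdot\check\lambda(\varpi)$ under an element of $A(\mathfrak o_F)\subset G(\mathfrak o_F)$, so the map to $\Lambda_X^+$ is then also injective.

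\emph{Integral orbits.} Finally, by the valuative criterion applied to the proper morphism $\mathfrak{\hat X}\to\mathfrak X$, a point $x\in X^+(F)$ lies in $X(\mathfrak o_F)$ if and only if it extends to $\mathfrak{\hat X}(\mathfrak o_F)$; rerunning the argument above with $\mathfrak{\hat X},\mathfrak{\hat X}_B$ in place of $\mathfrak{\bar X},\mathfrak{\bar X}_B$ — where now the toric factor has cone $\mathcal C(\hat X)=\mathcal C(X)\cap\mathcal V$ — forces the cocharacter of such an $x$ into $(\mathcal C(X)\cap\mathcal V)\cap\Lambda_X=\mathcal C(X)\cap\Lambda_X^+$. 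Conversely, for $\check\lambda\in\mathcal C(X)\cap\Lambda_X^+=\mathcal C(\hat X)\cap\Lambda_X$ the point $\check\lambda(\varpi)$ extends to $\hat{\mathcal Y}(\mathfrak o_F)\subset\mathfrak{\hat X}(\mathfrak o_F)$, hence, composing with $\mathfrak{\hat X}\to\mathfrak X$, to $\mathfrak X(\mathfrak o_F)$; and since $\mathcal G(\mathfrak o_F)$ preserves $\mathfrak X(\mathfrak o_F)$, lying in $X(\mathfrak o_F)$ is a property of the whole orbit, which yields the last assertion.

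\emph{The main obstacle.} The soft part — the two runs of the existence argument — uses only properness together with the local structure of Proposition \ref{places}, the sole genuinely arithmetic nuisance being the Lang--Weil enlargement of $S_0$ needed to realize the orbit transporter over the residue field. The real content is the separation of $\Lambda_X^+$ by the invariants $d_\lambda$: one must check that on $\Lambda_X^+$ the minimum defining $d_\lambda(x_0\cdot a)$ is controlled well enough (it is genuinely piecewise-linear when $X$ is not wavefront) that the family still distinguishes all cocharacters. This is the heart of the Gaitsgory--Nadler argument and is precisely where the interplay between the valuation cone $\mathcal V$, the spherical roots, and the weights of the $V_\lambda$ enters; I expect this to be the step requiring the most care.
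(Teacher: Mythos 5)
The existence step takes a different route from the paper: you invoke Lang--Weil to produce an $\FF_v$-point of the transporter, whereas the paper proves directly (reducing to rank-one groups, via \cite[Lemma 3.7.3]{SaSpc} and the classification of spherical $\SL_2$-varieties) that for a spherical variety of a split group over \emph{any} field the $\FF$-points of the open $B$-orbit meet every $G(\FF)$-orbit. Your route can be made to work after enlarging $S_0$, but it buys you less (it excludes small residue fields unnecessarily, and one has to verify uniform geometric irreducibility of the transporter family), while the paper's structural lemma is sharper and carries no such overhead. The torsion-free bijectivity and the integrality reduction to $\hat X$ are essentially the paper's arguments.

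The genuine gap is in the distinctness step. You try to separate cocharacters by the $G(\mathfrak o_F)$-invariants $d_\lambda(x)=\min_{0\ne f\in L_\lambda}v(f(x))$, and assert that $d_\lambda(x_0\cdot a)=\min_\xi\langle\xi,\mathrm{val}(a)\rangle$ by a ``weight-space computation.'' This equality is not established and is delicate: the $A_X$-eigendecomposition of $V_\lambda|_{\mathring X^+}$ is not a decomposition into $G$-submodules, so for a fixed $f\in L_\lambda$ the valuation $v(f(x_0\cdot a))$ can be \emph{strictly larger} than the predicted minimum because of cancellation among eigencomponents, and the minimum over $f\in L_\lambda$ need not repair this (an individual $\xi$-eigenfunction on $\mathring X^+$ is in general not an element of $V_\lambda$). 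Even granting the formula, a family of concave piecewise-linear functions of $\mathrm{val}(a)$ does not obviously separate the points of $\Lambda_X^+$. You flag this as ``the heart of the Gaitsgory--Nadler argument,'' but it is not: the paper's proof (following \cite{GN}) distinguishes $\lambda\ne\lambda'$ by an entirely different mechanism. When $\lambda,\lambda'$ are not $\QQ$-proportional, one constructs (over $\mathfrak o_F$, using the Luna--Vust machinery and Proposition \ref{samedata}) a toroidal embedding $X^t$ with $\lambda(\varpi)\in X^t(\mathfrak o_F)$ but $\lambda'(\varpi)\notin X^t(\mathfrak o_F)$; when they are proportional and nonzero, one picks a toroidal compactification in which $\lim_{t\to 0}\lambda(t)$ lands on a codimension-one $G$-orbit $D$, and the intersection numbers of the arcs $\lambda(\varpi),\lambda'(\varpi)$ with $D$ differ. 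Both obstructions are manifestly $G(\mathfrak o_F)$-invariant, which is what closes the argument. Without replacing your $d_\lambda$-separation claim by a proof, or switching to the toroidal/intersection-number argument, the distinctness assertion is unproved.
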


\begin{remark}
 The torsion of the quotient $\varchi(A)/\varchi(X)$ is the ``arithmetic multiplicity'' defined in \S \ref{ssinvariants}. Is is trivial if and only if the map: $A_X(F)/A(\mathfrak o) \to \Lambda_X$  is bijective, hence the statement about bijectivity in that case is straightforward. In general, elements in different $A(\mathfrak o_F)$-orbits may belong to the same $G(\mathfrak o_F)$-orbit, for instance, if $X^+=H\backslash G$ with $H$ connected then the map: $G(\mathfrak o_F)\ni g\mapsto x_0\cdot g\in X^+(\mathfrak o_F)$ will be surjective by an application of Lang's theorem (the vanishing of Galois cohomology of $H$ over a finite field). But it is also not always the case that elements corresponding to the same $\lambda$ will always be in the same $G(\mathfrak o_F)$-orbit -- for instance, when $H$ is not connected.
\end{remark}

We will prove this theorem together with a theorem about orbits of the first congruence subgroup, which will not be used here but will be useful elsewhere. Let $\mathbb F$ denote the residue field of $F$.

\begin{theorem}\label{Iwahoritheorem}
 Let $K_1, A_{X,1}, U_1$ be the preimages of $1\in G(\FF)$, $1\in A_X(\FF)$, $1\in U(\FF)$ in $G(\mathfrak o_F)$, $A_X(\mathfrak o_F)$, $U(\mathfrak o_F)$, respectively. Then for every $x \in A_X(F)^+$ we have $x\cdot K_1\subset x\cdot A_{X,1}\cdot U_1$.
\end{theorem}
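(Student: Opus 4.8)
The plan is to carry out, over $\mathfrak o_F$ and inside the first congruence neighbourhood, the compactification argument that also yields Theorem \ref{stratification}.

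\emph{Set-up.} I would work with the models of Proposition \ref{places}: the reductive $\mathcal G$, the proper toroidal model $\mathfrak{\bar X}$, its open $\mathcal P(X)$-stable piece $\mathfrak{\bar X}_B$, and the integral Local Structure isomorphism $\mathcal Y^c\times\mathcal U_{P(X)}\xrightarrow{\ \sim\ }\mathfrak{\bar X}_B$ of $\mathfrak o_{S_0}$-schemes, with $\mathcal Y^c$ a toric $\mathcal A_X$-variety of standard type and $\mathcal C(\bar X)=\mathcal V$. Since the image of $x$ in $\Lambda_X$ lies in $\Lambda_X\cap\mathcal V=\Lambda_X^+$, the valuative criterion for the fan of $\mathcal Y^c$ gives $x\in\mathcal Y^c(\mathfrak o_F)\subseteq\mathfrak{\bar X}_B(\mathfrak o_F)$, with trivial $\mathcal U_{P(X)}$-coordinate. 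For $k\in K_1$, properness of $\mathfrak{\bar X}$ gives $x\cdot k\in\mathfrak{\bar X}(\mathfrak o_F)$, and its special fibre equals that of $x$ (as $k$ reduces to $1$), hence lies in the open $\mathfrak{\bar X}_B$; so $x\cdot k\in\mathfrak{\bar X}_B(\mathfrak o_F)$. Since $X^+$ is $G$-stable and $X^+\cap\bar X_B=\mathring X^+$, we get $x\cdot k\in\mathring X^+(F)$, and in the Local Structure coordinates $x\cdot k=y(k)\cdot u(k)$ with $y(k)\in A_X(F)\cap\mathcal Y^c(\mathfrak o_F)=A_X(F)^+$ and $u(k)\in\mathcal U_{P(X)}(\mathfrak o_F)$.

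\emph{The easy coordinate.} Reducing $x\cdot k=y(k)u(k)$ modulo the maximal ideal and invoking uniqueness of the Local Structure decomposition over $\FF$: the reduction of $x\cdot k$ is $\bar x$, with trivial $\mathcal U_{P(X)}$-coordinate, so $\overline{u(k)}=1$, i.e. $u(k)\in\mathcal U_{P(X),1}\subseteq U_1$. As $u(k)\in G(\mathfrak o_F)$ we also get $y(k)=x\cdot\bigl(k\,u(k)^{-1}\bigr)$ with $k\,u(k)^{-1}\in K_1$, so $x$ and $y(k)$ lie in the same $G(\mathfrak o_F)$-orbit and hence, by Theorem \ref{stratification}, have the same image $\check\lambda\in\Lambda_X^+$. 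Writing $x=\check\lambda(\varpi)x_1$ and $y(k)=\check\lambda(\varpi)y_1(k)$ with $x_1,y_1(k)\in A_X(\mathfrak o_F)$, what remains is to show $\overline{y_1(k)}=\overline{x_1}$ in $A_X(\FF)$, equivalently $y(k)\in x\cdot A_{X,1}$; then $x\cdot k=y(k)u(k)\in x\cdot A_{X,1}\cdot U_1$, as wanted.

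\emph{The hard coordinate.} This last point is equivalent to the factorisation $K_1=(K_1\cap G_x)\cdot A_1\cdot U_{P(X),1}$, where $G_x$ is the isotropy group of $x$ and $A_1$ the first congruence subgroup of $A(\mathfrak o_F)$ (which surjects onto $A_{X,1}$); note that the single reduction modulo $\varpi$ used above does \emph{not} by itself pin $y_1(k)$ down, since the reduction of $\check\lambda(\varpi)$ in $\mathcal Y^c$ may be a badly degenerate (e.g. $\mathcal A_X$-fixed) point of a boundary orbit. I would prove the factorisation by successive approximation: given $k=h_n\kappa_n g_n$ with $h_n\in K_1\cap G_x$, $g_n\in A_1U_{P(X),1}$, $\kappa_n\in K_n$, refining to level $n+1$ requires choosing $\delta g\in A_nU_{P(X),n}$ absorbing the class of $\kappa_n$ in $K_n/K_{n+1}\cong\mathfrak g\otimes\FF$ modulo the images of $K_n\cap G_x$, $A_n$ and $U_{P(X),n}$. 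Over $F$ these images span $\mathfrak g$, precisely because the Local Structure isomorphism forces $\mathfrak a+\mathfrak u_{P(X)}$ to surject onto $T_xX^+=\mathfrak g/\mathfrak g_x$; the real content, and the step I expect to be the main obstacle, is that this surjectivity persists modulo $\varpi$ \emph{uniformly in $x\in A_X(F)^+$}. For this I would exploit that the Local Structure isomorphism and the standard-type toric structure of $\mathcal Y^c$ are already defined over $\mathfrak o_{S_0}$, reducing the statement to a transversality assertion over the reductive model $\mathcal G$ at each of the finitely many orbits in the special fibre of $\mathfrak{\bar X}_B$ (one for each face of $\mathcal V$). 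Passing to the limit, the congruence filtrations being complete and separated, yields the factorisation and hence the theorem.
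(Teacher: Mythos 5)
Your set-up and ``easy coordinate'' paragraphs match the geometric frame of the paper's proof: you use the proper toroidal model $\mathfrak{\bar X}$, properness over $\mathfrak o_F$, the integral Local Structure decomposition, and reduction modulo $\varpi$ to conclude that $x\cdot k\in A_X(F)^+\cdot U_1$ with $u(k)\in U_1$ and with the same image $\check\lambda\in\Lambda_X^+$ as $x$. You have also correctly isolated the crux: when $\check\lambda\ne 0$, the reduction $\bar x\in\mathcal Y^c(\FF)$ may be a highly degenerate boundary point, so $\overline{y(k)}=\bar x$ does not by itself place $y(k)$ in $x\cdot A_{X,1}$.

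It is precisely here that your argument has a genuine gap and diverges from the paper. Your successive-approximation route depends on the surjectivity of $\mathfrak a+\mathfrak u_{P(X)}\to T_xX^+$ ``persisting modulo $\varpi$ uniformly in $x\in A_X(F)^+$,'' which you flag yourself as the main obstacle and do not establish; it is exactly the kind of statement that threatens to fail as $x$ moves deep into the cone, because the tangent-map data degenerate at the non-open orbits of the special fibre. The paper avoids uniformity altogether by building a \emph{scale-sensitive} invariant preserved by $K_1$: for $\lambda_x\ne 0$ one picks a toroidal embedding $X^t$ over $\mathfrak o$ (tailored to $\check\lambda$, not $\bar X$) in which $\lim_{t\to 0}\check\lambda(t)$ lies on a codimension-one orbit with closure $D$, records the intersection number $n$ of $x\colon\spec\mathfrak o\to X^t$ with $D$, and the reductions $\bar x^n$, $\bar x^{n+1}$ then produce an $\FF$-linear map from the fiber at $\bar x$ of the conormal bundle of $D$ to $\mathfrak p^n/\mathfrak p^{n+1}$. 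Since $K_1$ fixes $\bar x$ and acts trivially on that conormal fiber, it preserves the map; and among elements of $A_X(F)^+\cdot U_1$ with image $\check\lambda$, this invariant characterizes the $A_{X,1}\cdot U_1$-orbit. (For $\lambda_x=0$ the reduction in $X^+(\FF)$ already suffices, as in your easy case.) So: the reduction to ``distinguish elements with the same $\check\lambda$'' is the same, but the paper's mechanism -- intersection numbers and the conormal-bundle map on a codimension-one stratum -- replaces your unproved uniform transversality claim.
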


\begin{proof}[Proof of Theorems \ref{stratification} and \ref{Iwahoritheorem}]
Denote $\mathfrak o_F$ by $\mathfrak o$. We use the notation $X^c, X^c_B, Y^c,$ etc.\ as above for the scheme $\bar X$. The $\mathfrak o$-scheme $X^c$ is proper and hence $X^c(\mathfrak o)=X^c(F)$. We will first show that $Y^c(\mathfrak o)$ contains representatives for all $G(\mathfrak o)$-orbits on $X^c(\mathfrak o)$. Let $x\in X^c(\mathfrak o)$ and denote by $\bar x\in X^c(\FF)$ its reduction. The open, $P(X)$-stable subvariety $X_B^c$ meets every $G$-orbit; for a spherical variety for a split reductive group over an arbitrary field (denoted $\FF$, since we will apply it to this field) the $\FF$-points of the open $B$-orbit meet every $G(\FF)$-orbit. (This is proven following the argument of \cite[Lemma 3.7.3]{SaSpc}, i.e.\ reducing to the case of rank one groups, and by inspection of the spherical varieties for $\SL_2$, classified in \cite[Theorem 5.1]{KnR1}.) This means that there is a $\bar g\in G(\FF)$ (which we can lift to a $g\in G(\mathfrak o)$) such that $\overline{x \cdot g}\in X_B^c(\FF)$. Since 
$X_B^c$ is open, this means that $x\cdot g\in X_B^c(\mathfrak o)=Y^c(\mathfrak o)\times U_{P(X)}(\mathfrak o)$. Acting by a suitable element of $U_{P(X)}(\mathfrak o)$, we get a representative for the $G(\mathfrak o)$-orbit of $x$ in $Y^c(\mathfrak o)$.
Hence, $G(\mathfrak o)$-orbits on $X^+(F)$ are represented by elements of $A_X(F)^+=Y^c(\mathfrak o)\cap A_X(F)$. 

To prove that elements mapping to distinct $\lambda, \lambda'\in \Lambda_X^+$ belong to different  $G(\mathfrak o)$-orbits, the argument of Gaitsgory and Nadler carries over verbatim: If $\lambda$ and $\lambda'$ are not $\QQ$-multiples of each other, we can construct as in \cite{KnLV} a toroidal embedding $X^t$ of $X^+$ over $\mathfrak o$ such that $\lambda(\varpi)\in X^t(\mathfrak o)$ but $\lambda'(\varpi)\notin X^t(\mathfrak o)$. Finally, if $\lambda$ and $\lambda'$ are $\QQ$-multiples of each other (without loss of generality: $\lambda\ne 0$), then we can find a toroidal compactification $X^t$ such that $\lim_{t\to 0} \lambda(t)$ belongs to some $G$-orbit $D$ of codimension one, and then the intersection numbers of $\lambda(\varpi)$ and $\lambda'(\varpi)$ (considered as $1$-dimensional subschemes of $X^t$) with $D$ are different. (Notice that the constructions of \cite{KnLV} are over a field of arbitrary characteristic, and based on Proposition \ref{samedata} one can carry them over over the ring $\
mathfrak o_F$.)

To finish the proof of Theorem \ref{stratification}, if we now consider $\hat X$ then we have a proper morphism: $\hat X\to X$ which is an isomorphism on $X^+$. By the valuative criterion for properness, every point in $X(\mathfrak o)\cap X^+(F)$ lifts to a point on $\hat X(\mathfrak o)$, therefore for the last statement it suffices to determine the set of $G(\mathfrak o)$-orbits on $\hat X(\mathfrak o)\cap X^+(F)$. By the same argument as before, every $G(\mathfrak o)$-orbit meets $\hat Y(\mathfrak o)$, and the latter intersects $A_X(F)$ precisely in the union of $A_X(\mathfrak o)$-orbits represented by $\Lambda_X\cap \mathcal C(X)$.

For Theorem \ref{Iwahoritheorem}, we first notice that $X_B^c(\mathfrak o)$ (where $X^c$ still denotes $\bar X$) is $K_1$-stable; indeed, for any $x\in X_B^c(\mathfrak o)$ and $g\in K_1$ the reduction of $x\cdot g$ belongs to $X_B^c(\FF)$, and since $X_B^c$ is open this implies that $x\cdot g\in X_B^c(\mathfrak o)$. Now we claim that $Y^c(\mathfrak o)\cdot U_1$ is also $K_1$-stable; indeed, this is the preimage in $X_B^c(\FF)$ of $Y^c(\FF)$, and  for every $x\in Y^c(\mathfrak o)\cdot U_1, g\in K_1$ the reduction of $x\cdot g$ belongs to $Y^c(\FF)$. We have already argued that elements of $A_X(F)^+$ with different images in $\Lambda_X^+$ belong to distinct $G(\mathfrak o)$-orbits, hence to distinct $K_1$-orbits; hence, $x\cdot K_1$ belongs to the set of elements of $A_X(F)^+ \cdot U_1$ with the same image $\lambda_x\in \Lambda_X^+$ as $x$.

To distinguish between those elements, we assign to them some invariants which will be preserved by the $K_1$-action. First of all, if $\lambda_x=0$ then the reduction of $x$ modulo $\mathfrak p$ is an element of $X^+(\FF)$ which is preserved by $K_1$, and the elements of $A_X(F)^+ \cdot U_1$ having the same reduction are precisely the elements in the same $A_{X,1}\cdot U_1$-orbit as $x$. Assume now that $\lambda_x\ne 0$ and fix as above a spherical embedding $X^t$ of $X^+$ over $\mathfrak o$ such that $\lim_{t\to 0}\lambda(t)$ belongs to a $G$-orbit of codimension one, whose closure we denote by $D$. Let $n$ be the intersection number of $x\in X^t(\mathfrak o)\cap X^+(F)$ with $D$, then $x: \spec \mathfrak o \to X^t$ has reductions $\bar x: \spec \FF\to D$, $\bar x^n: \spec (\mathfrak o/\mathfrak p^n) \to D$ and $\bar x^{n+1}: \spec (\mathfrak o/\mathfrak p^{n+1}) \to X^t$, which give rise to an $\FF$-linear map from the fiber at $\bar x$ of the conormal bundle of $D$ in $X^t$ to $\mathfrak p^n/\mathfrak p^{
n+1}$. The group $K_1$ preserves the reduction of $x$ and acts trivially on the fiber of the conormal bundle of $D$ over it, therefore preserves this map. It is straightforward to see that for elements of $A_X(F)^+ \cdot U_1$ with the same image in $\Lambda_X^+$ this invariant characterizes the $A_{X,1}\cdot U_1$-orbit of $x$.

\end{proof}


\section{Speculation on Schwartz spaces and automorphic distributions}\label{secSchwartz}

This section is highly conjectural and only aims at fixing ideas. We speculate on the existence of some ``Schwartz space'' of functions on the points of an affine spherical variety over a local field, and explain how to construct from it distributions on the automorphic quotient $[G]:=G(k)\backslash G(\Ad)$ which should have good analytic properties. At almost every place this space of functions should come equipped with a distinguished, unramified element which should be related (in a rather ad hoc way, using the generalized Cartan decomposition) to intersection cohomology sheaves on spaces defined by Gaitsgory and Nadler. In subsequent sections we will specialize to the case where $X$ has a certain geometry (which we call a ``pre-flag bundle''), and these distinguished functions will be described explicitly, in order to understand the Rankin-Selberg method.

\subsection{Formalism of Schwartz spaces and theta series}\label{ssformalism}

\subsubsection{Schwartz space} \label{Schwartz} We fix an affine spherical variety $X$ for a (split) reductive group $G$ over a global field $k$, and for every place $v$ of $k$ we denote by $X_v^+$ the space of $k_v$-points of $X^+$. We assume as given, for every $v$, a $G_v$-invariant ``Schwartz space'' of functions $\mathcal S(X_v)\subset C^\infty(X_v^+)$, and for almost every (finite) $v$ a distinguished unramified element $\Phi_v^0\in \mathcal S(X_v)^{G(\mathfrak o_v)}$ (called ``basic vector'' or ``basic function'') such that: 
\begin{equation}\label{oneonsmooth}\Phi_v^0|_{X^+(\mathfrak o_v)} = 1.
\end{equation}
 (Clearly, the integral model which is implicit in the definitions will not play any role.) We also assume the following regarding the support of Schwartz functions and their growth close to the complement of $X^+$:

\begin{itemize}
\item The closure in $X_v$ of the support of any element of $\mathcal S(X_v)$ is compact.
\item There exist a finite set $\{f_1,\dots,f_n\}$ of elements of $k[X]$, whose common zeroes lie in $X\smallsetminus X^+$, and a natural number $n$, such that for any place $v$ and any $\Phi_v\in \mathcal S(X_v)$ there is a constant $c_v$, equal to $1$ for $\Phi_v=\Phi_v^0$, such that for all $x\in X^+(k_v)$ we have:  $|\Phi_v(x)|\le c_v\cdot (\max_i|f_i(x)|)^{-1}$.
\end{itemize}
At archimedean places the requirement of compact support is far from ideal, but for our present purposes it is enough. One should normally impose similar growth conditions on the derivatives (at archimedean places) of elements of the Schwartz space, but we will not need them here.

The corresponding \emph{global Schwartz space} is, by definition, the restricted tensor product:
\begin{equation} \mathcal S(X(\Ad)):= \bigotimes'_v \mathcal S(X_v)
\end{equation}
with respect to the basic vectors $\Phi_v^0$.

Despite the notation, the elements of $\mathcal S(X(\Ad))$ cannot be interpreted as functions on $X(\Ad)$. They \emph{can} be considered, though, as functions on $X^+(\Ad)$, because of the requirement (\ref{oneonsmooth}).

We may require, without serious loss of generality, that $X^+(\Ad)$ carries a positive $G(\Ad)$-eigenmeasure $dx$ whose eigencharacter $\psi$ is the absolute value of an algebraic character. We normalize the regular representation of $G(\Ad)$ on functions on $X^+(\Ad)$ so that it is unitary when restricted to $L^2(X)=L^2(X,dx)$: $$g\cdot \Phi (x):= \sqrt{\psi(g)} \Phi(x\cdot g).$$

The \emph{$X$-theta series} is the following functional on $\mathcal S(X(\Ad))$:
\begin{equation}\theta(\Phi):= \sum_{\gamma\in X^+(k)} \Phi(\gamma).
\end{equation}
Translating by $G(\Ad)$, we can also consider it as a morphism:
\begin{equation}
 \mathcal S(X(\Ad))\to C^\infty([G]),
\end{equation}
which will be denoted by the same letter, i.e.:
\begin{equation}\label{PsES}
\theta(\Phi,g)=\sum_{\gamma\in X^+(k)} (g\cdot\Phi)(\gamma).
\end{equation}

This sum is absolutely convergent, by the first growth assumption. (Notice that $X$ is affine and hence $X(k)$ is discrete in $X(\Ad)$.)

\subsubsection{Mellin transform} \label{sssMellin} Now recall (Proposition \ref{structureaffine}) that, unless $X$ is affine homogeneous, it has a positive-dimensional group of $G$-automorphisms, i.e.\ $\mathcal Z(X)\ne 0$. By enlarging $G$ and dividing by the subgroup of $\mathcal Z(G)^0$ that acts trivially, we will from now on assume that $\mathcal Z(G)^0\simeq \mathcal Z(X)$ under its action on $X$. An algebraic character of $\mathcal Z(X)$ will be called \emph{$X$-positive} if it extends to the closure of a generic orbit of $\mathcal Z(X)$, that is: $\chi: \mathcal Z(X)\to \Gm$ is positive if for $Y = \overline{\mathcal Z(X)\cdot x}$, where $x$ is a generic point (say, a point on the open $G$-orbit) the function $z\cdot x\mapsto \chi(z)\in \Gm\subset\Ga$ extends to a morphism: $Y\to \Ga$. Obviously, $X$-positive characters span a polyhedral cone in $\varchi(\mathcal Z(X))\otimes \QQ$, and we will use the expression ``sufficiently $X$-positive characters'' to refer to characters in the translate of this 
cone by an element belonging to its relative interior. This notion will also be used for complex-valued characters: a sufficiently $X$-positive character is one whose absolute value can be written as the product of the absolute values sufficiently $X$-positive algebraic characters, raised to powers $\ge 1$. Similar notions will be used for the dual cone, in the space of cocharacters into $\mathcal Z(X)$; for example, a cocharacter $\check\lambda$ is $X$-positive if and only if for a generic point $x\in X$ we have $\lim_{t\to 0}x\cdot \check\lambda(t)\in X$. Finally, since by our assumption $\varchi(G)\otimes\QQ=\varchi(\mathcal Z(X))\otimes\QQ$, we can use the notion of $X$-positive characters for characters of $G$, as well.

\begin{proposition}\label{moderategrowth}
 The function $\theta(\Phi,g)$ on $G(k)\backslash G(\Ad)$ is of moderate growth. Moreover, it is compactly supported in the direction of $X$-positive cocharacters into $\mathcal Z(G)$; that is, for every $g\in G(\Ad)$ we have: $$\theta(\Phi, g\cdot \check\lambda(a))=0$$ if $\check \lambda$ is a non-trivial $X$-positive cocharacter into $\mathcal Z(X)=\mathcal Z(G)^0$ and the norm of $a\in \Ad^\times$ is sufficiently large.
\end{proposition}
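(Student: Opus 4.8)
The statement has two parts: moderate growth of $\theta(\Phi,g)$, and vanishing along $X$-positive directions into $\mathcal Z(G)^0$ for large $a$. I would treat the vanishing first, since it is the mechanism underlying the whole ``analytic continuation via Mellin transform'' philosophy, and then deduce moderate growth by a routine estimate on the (finitely many, after the vanishing) remaining terms.

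For the vanishing: fix $g \in G(\Ad)$ and a non-trivial $X$-positive cocharacter $\check\lambda$ into $\mathcal Z(X) = \mathcal Z(G)^0$. Recall that $\theta(\Phi, g\check\lambda(a)) = \sum_{\gamma \in X^+(k)} \sqrt{\psi(g\check\lambda(a))}\,\Phi(\gamma \cdot g\check\lambda(a))$, and since $\check\lambda$ lands in the group of $G$-automorphisms $\mathcal Z(X)$, acting by $\check\lambda(a)$ on the argument is the same as acting by the automorphism $\check\lambda(a)$, which commutes with the $G(k)$-action; so I may as well track how $\check\lambda(a)$ moves a point $x = \gamma \cdot g \in X^+(\Ad)$. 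The key point is the compact-support hypothesis on $\mathcal S(X_v)$ together with the growth bound $|\Phi_v(x)| \le c_v (\max_i |f_i(x)|)^{-1}$ for a fixed finite set $f_1,\dots,f_n \in k[X]$ whose common zero locus lies in $X \smallsetminus X^+$. The functions $f_i$ decompose into $\mathcal Z(X)$-eigenfunctions, and since $\check\lambda$ is $X$-positive, $\lim_{t\to 0} x\cdot\check\lambda(t) \in X$ for generic $x$; combined with $X$ being affine, this means that for $x$ in the (compact mod center, by compact support of $\Phi$ at all places) support and $|a|$ large enough, either $x\cdot g\check\lambda(a)$ is pushed off the support of $\Phi$ entirely, or — the subtler case — $\max_i|f_i(x\cdot g\check\lambda(a))|$ still does not grow. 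I would argue as follows: choose the $f_i$ to be $\mathcal Z(X)$-semiinvariant with $X$-positive eigencharacters (possible after enlarging the set), so that $|f_i(x\cdot g\check\lambda(a))| = |\chi_i(\check\lambda(a))| \cdot |f_i(x\cdot g)|$ with $\langle \chi_i, \check\lambda\rangle > 0$; then as $|a|\to\infty$ each $|f_i|$ of a fixed point in the support blows up. But $X^+(k)$ is \emph{discrete} in $X^+(\Ad)$ and each $\gamma\cdot g$ lies in a fixed compact set (the support of $g\cdot\Phi$), so there are only finitely many contributing $\gamma$, and for $|a|$ beyond a threshold depending only on that finite set and on $\Phi$, all of them leave the support — so the sum is literally empty, not merely convergent to zero. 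This is the heart of the argument and the step where I expect the main technical care: one must confirm that ``$X$-positive'' truly forces every $f_i$-value to escape to infinity (equivalently, that the closure of the $\check\lambda$-orbit through a generic point meeting $X$ pins down the sign of $\langle\chi_i,\check\lambda\rangle$ for a generating set of semiinvariants), and that the compact-support hypothesis at \emph{archimedean} places (which the authors explicitly flag as non-ideal) is indeed being used to keep the support compact mod $\mathcal Z(G)^0$.

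For moderate growth: away from the $X$-positive directions we need a genuine (polynomial) bound rather than vanishing. Here I would use a reduction-theory argument on $[G]$: a Siegel-set decomposition reduces the estimate to bounding $\sum_{\gamma\in X^+(k)}|(g\cdot\Phi)(\gamma)|$ for $g$ ranging over a Siegel set, and then invoke the growth bound $|\Phi_v(x)| \le c_v(\max_i|f_i(x)|)^{-1}$ at each place together with the global product formula / height machinery: the number of $k$-points $\gamma \in X^+(k)$ with all $|f_i(\gamma\cdot g)|_v$ bounded below (i.e.\ in the support) grows at most polynomially in the ``size'' of $g$ because $X\hookrightarrow \AA^N$ via the $f_i$ and the coordinate functions, and counting lattice points in a box of side $\sim\|g\|$ is polynomial. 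Since $\theta(\Phi,g)$ is also left $G(k)$-invariant and right-invariant under an open compact, this Siegel-set bound upgrades to moderate growth on all of $[G]$ in the usual way. I do not expect surprises in this half — it is the standard ``theta series / Eisenstein series are of moderate growth'' estimate — so I would keep it brief, citing the growth hypotheses on $\mathcal S(X_v)$ and the discreteness of $X(k)$ in $X(\Ad)$ (which, as the authors note, follows from $X$ affine) as the only inputs.
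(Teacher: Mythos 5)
Your proposal contains the right ingredients, but the fit is looser than the paper's own argument, and the moderate-growth half takes a detour the paper avoids. The paper does not use a Siegel-set decomposition at all: its definition of moderate growth is already an estimate for $g\in G_\infty$, and the proof is a direct bound $|\theta(\Phi,g)|\le \#(X^+(k)\cap S_\Phi g^{-1})\cdot\sup_{\gamma}|\Phi(\gamma g)|$, where each factor is separately polynomially bounded in $\Vert g\Vert$. The crux of the second factor — which your sketch folds into the counting argument rather than stating cleanly — is that the adelic distance function $d_Z$ satisfies $d_Z(\gamma)\ge 1$ for every $\gamma\in X^+(k)$ by the product formula (some $f_i(\gamma)\in k^\times$), together with the Lipschitz-type estimate $d_{Z}(x\cdot g)\ge \Vert g\Vert^{-1}d_Z(x)$ obtained by taking the $f_i$ to span a $G$-invariant subspace of $k[X]$ and using the induced operator norm. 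This yields $|\Phi(\gamma g)|\ll^p d_Z(\gamma g)^{-1}\ll^p \Vert g\Vert$ directly; the count $\#(X^+(k)\cap S_\Phi g^{-1})\ll^p\Vert g\Vert$ is the standard lattice-point bound you also invoke. For the vanishing statement, the paper simply declares it an ``obvious corollary of the compact support of $\Phi$,'' whereas you give a more elaborate argument via $\mathcal Z(X)$-semiinvariants with strictly positive pairings $\langle\chi_i,\check\lambda\rangle>0$; this is the right picture, but as stated it has a subtle hole — for a given $\gamma g\in X^+(\Ad)$ the particular $f_i$ with strictly positive pairing could vanish there, so ``each $|f_i|$ blows up'' does not hold point by point. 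One needs to argue that, since the point lies in $X^+(\Ad)$ (where $\mathcal Z(X)$ acts with finite kernel) and $\check\lambda$ is nontrivial, the $\check\lambda(\Gm)$-orbit closure through it is unbounded in $X$ as $|a|\to\infty$, so that it eventually exits any fixed compact set; you flag that this step needs care, which is accurate. In short: your proposal is correct in outline and identifies the same two ingredients (growth bound versus $d_Z$, point counting), but the Siegel-set reduction is unnecessary, and the role of the product formula — bounding the \emph{size} of individual summands, not just their \emph{number} — should be isolated as the decisive step, as the paper does.
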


The statement about the support is an obvious corollary of the compact support of $\Phi$, and the statement on moderate growth will be proven in the next subsections. Assuming it for now, we may consider the \emph{Mellin transform} of $\theta(\Phi,g)$ with respect to the action of $\mathcal Z(G)$:
\begin{equation}\label{ES}
E(\Phi,\omega,g) = \int_{\mathcal Z(X)(\Ad)} \theta(z\cdot \Phi, g) \omega(z) dz,
\end{equation}
originally defined for sufficiently $X$-positive idele class characters $\omega$. We will call this an \emph{$X$-Eisenstein series}.

We have:
\begin{proposition}\label{convergence}
For sufficiently $X$-positive $\omega$, the integral (\ref{ES}) converges and the function $E(\Phi,\omega,g)$ is of moderate growth in $g$.
\end{proposition}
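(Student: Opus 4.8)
The plan is to deduce both assertions from Proposition~\ref{moderategrowth}, which already provides moderate growth of $\theta(\Phi,g)$ together with its vanishing along sufficiently positive cocharacters into $\mathcal Z(G)^0=\mathcal Z(X)$. The key point is that the integral defining $E(\Phi,\omega,g)$ is, after choosing a splitting, an integral over a quotient of $\mathcal Z(X)(\Ad)$ against a character, and the integrand is a function that decays in the $X$-positive directions (in fact is compactly supported there, by the support statement in Proposition~\ref{moderategrowth}) while growing at most moderately in the complementary directions.

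First I would reduce to a concrete situation. Fix a maximal compact subgroup $K_{\mathcal Z}$ of $\mathcal Z(X)(\Ad)$ and write $\mathcal Z(X)(\Ad)\simeq \mathcal Z(X)(\Ad)^1\times \AA$, where $\mathcal Z(X)(\Ad)^1$ is the norm-one subgroup and $\AA$ is a copy of $\RR_{>0}^{r}$ coming from the absolute values of a basis of algebraic characters of $\mathcal Z(X)$; here $r=\dim\mathcal Z(X)$. Since $\mathcal Z(X)(k)\mathcal Z(X)(\Ad)^1$ is a closed subgroup of $\mathcal Z(X)(\Ad)$ with compact quotient modulo the connected $\AA$-part, and $\omega$ is an idele class character (hence $\mathcal Z(X)(k)$-invariant), the integral over $\mathcal Z(X)(\Ad)$ unfolds to a finite sum of integrals of the form $\int_{\mathcal Z(X)(k)\backslash\mathcal Z(X)(\Ad)}$ against $\omega$; because the summand $\theta(z\cdot\Phi,g)$ is left $\mathcal Z(X)(k)$-invariant (it is a function on $[G]$ translated by $z\in\mathcal Z(X)\subset G$), this further collapses to an integral over $\mathcal Z(X)(k)\backslash\mathcal Z(X)(\Ad)$, i.e.\ over a compact set times $\AA$.

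Next I would estimate the integrand. Decompose $z=z_1\cdot t$ with $z_1$ ranging over a compact set and $t\in\AA$. The cone of $X$-positive cocharacters spans $\AA$ up to a subspace of smaller dimension complementary to the one-parameter subgroups where Proposition~\ref{moderategrowth} forces vanishing; more precisely, for $t$ with $\|t\|$ large in the $X$-positive directions, $\theta(z_1 t\cdot\Phi,g)=0$, so the $t$-integral is effectively over a region of the form $\{\,t : \text{the $X$-positive coordinates of } t \le C\,\}$, which after removing the $X$-positive directions is a product of half-lines of the complementary coordinates times a bounded set. On that remaining region the moderate-growth bound for $\theta$ — together with the explicit growth bound $|\Phi_v(x)|\le c_v(\max_i|f_i(x)|)^{-1}$ and the polynomial control on the theta sum — gives $|\theta(z_1 t\cdot\Phi,g)|\le C_g\,\|t\|^{N}$ for some $N$ depending only on $X$ and $\Phi$, with $C_g$ of moderate growth in $g$. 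Since $|\omega(z_1 t)|=|\omega(t)|$ is a power of $\|t\|$ in each coordinate and $\omega$ is \emph{sufficiently} $X$-positive — meaning its absolute value is the product of sufficiently positive algebraic characters raised to powers $\ge 1$ — one can arrange $|\omega(t)|\,\|t\|^{N}$ to be integrable on that region. This gives absolute convergence of (\ref{ES}). Finally, moderate growth in $g$ follows by pulling the (uniform in $z$ over the convergence region) moderate-growth estimate $C_g$ out of the integral: $|E(\Phi,\omega,g)|\le C_g\cdot\big(\int |\omega(t)|\,\|t\|^{N}\,dt\big)<\infty$, and $C_g$ is of moderate growth by construction.

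The main obstacle I expect is making the interchange of the growth estimate and the integral genuinely uniform: one needs the constant in the moderate-growth bound for $\theta(z\cdot\Phi,g)$ to be controlled \emph{uniformly} as $z$ varies over the (noncompact) convergence region, not merely for fixed $z$. This requires looking into how Proposition~\ref{moderategrowth} is proven — presumably via a Siegel-set reduction on $[G]$ together with the explicit growth assumption on $\mathcal S(X_v)$ near $X\smallsetminus X^+$ — and checking that the estimate degrades at worst polynomially in $\|z\|$, which is exactly the shape that the sufficiently-$X$-positive hypothesis on $\omega$ is designed to absorb. Once that uniformity is in hand, convergence and moderate growth in $g$ are immediate.
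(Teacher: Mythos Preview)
Your approach is essentially the same as the paper's, which simply says that convergence follows immediately from Proposition~\ref{moderategrowth} and that moderate growth is proven ``in the same way'' as that proposition. You spell out the mechanism (compact support along $X$-positive cocharacters bounds one side of the $\AA$-integral; moderate growth in $g$, hence polynomial in $\Vert z\Vert$, controls the other side; sufficient $X$-positivity of $\omega$ absorbs the polynomial), which is exactly what the paper leaves implicit.

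Two small points of cleanup. First, your worry about ``complementary directions'' is unnecessary: since $\mathcal Z(X)$ retracts $X$ onto the closed orbit (Proposition~\ref{Taction}), the cone of $X$-positive cocharacters is full-dimensional in $\varchi(\mathcal Z(X))^*\otimes\QQ$, so a basis of cocharacters can be chosen inside it and the support of $z\mapsto\theta(z\cdot\Phi,g)$ is genuinely bounded above in every coordinate. Second, the integrand $\theta(z\cdot\Phi,g)\omega(z)$ is $\mathcal Z(X)(k)$-invariant (since $z$ is central and $\theta(\Phi,\cdot)$ lives on $[G]$, while $\omega$ is an idele class character), so the integral in (\ref{ES}) should be read over $\mathcal Z(X)(k)\backslash\mathcal Z(X)(\Ad)$; your ``unfolding to a finite sum'' phrasing is awkward but lands in the right place. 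The uniformity you flag at the end is a non-issue once you observe that $\theta(z\cdot\Phi,g)=\theta(\Phi,zg)$ and apply the moderate-growth bound directly to $zg$: the constant is automatically polynomial in $\Vert z\Vert\cdot\Vert g\Vert$, which is what you need.
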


\begin{proof}
 The statement about convergence follows immediately from Proposition \ref{moderategrowth}; the statement on moderate growth is proven in the same way as Proposition \ref{moderategrowth}, and we will not comment on it separately.
\end{proof}

\subsubsection{Adelic distance functions.}\label{sssdistance}

Let $Z\subset X$ be a closed subvariety of an affine variety, and let $X^+$ denote the complement of $Z$. We would like to define some ``natural'' notion of distance  from $Z$ (denoted $d_Z$) for the adelic points of $X^+$. The distance function will be an Euler product:
$$ d_Z(x)=\prod_v d_{Z,v}(x_v)$$
where, for $x\in X^+(\Ad)$, almost all factors will be equal to one.

We do it in the following way: first, we fix a finite set $S$ of places, including the archimedean ones, and an affine flat model for $X$ over the $S$-integers $\mathfrak o_S$. The closure of $Z$ in this model defines an ideal $J\subset\mathfrak o_S[X]$. We can choose a finitely-generated $\mathfrak o_S$-submodule $M$ of $J$ such that $M$ generates $J$ as an $\mathfrak o_S[X]$-module. In the case when $X$ carries the action of a group $G$ and $Z$ is $G$-stable, we also choose a compatible flat model for $G$ over $\mathfrak o_S$ and require that $M$ be $G$-stable (i.e.\ the action map maps $M\to M\otimes_{\mathfrak o_S} \mathfrak o_S[G]$).

Finally, let $\{f_i\}_i$ be a finite set of generators of $M$ over $\mathfrak o_S$. Then for a point $x\in X^+(\Ad)$ we define:
\begin{equation}
 d_{Z,v}(x_v) = \max_i\{|f_i(x_v)|_v\}
\end{equation}
and 
\begin{equation} 
 d_Z(x)=\prod_v d_{Z,v}(x_v).
\end{equation}

We will call this an \emph{adelic distance function} from $Z$. Notice that almost all factors of this product are $1$ since $x\in X^+(\Ad)$. Moreover, the function extends by zero to a continuous function on $X(\Ad)$.

\begin{remark}
For $v\notin S$ the local factor $d_{Z,v}$ depends only on $M$ and not the choices of $f_i$'s: it is the absolute value of the fractional ideal generated by the image of $M$ under $x_v: \mathfrak o_S[X]\to \mathfrak o_v$. Moreover, the restriction of $d_{Z,v}$ to $X(\mathfrak o_v)$ does not depend on $M$, either, since the image of $J$ generates the same fractional ideal. (The restriction of $d_{Z,v}$ to $X(\mathfrak o_v)$ is a height function, i.e.\ $q_v$ raised to the intersection number of $x\in X(\mathfrak o_v)$ with $Z$.)

Finally, the restriction of $d_Z$ to any compact subset of $X(\Ad)$ is up to a constant multiple independent of choices. Indeed, such a compact subset is the product of $X(\mathfrak o_v)$, for $v$ outside of a finite number of places $S'\supset S$, with a compact subset of $\prod_{v\in S'} X(k_v)$, therefore it suffices to prove independence for the $d_{Z,v}$'s when $v\in S'$. For any two sets of functions $\{f_j\}_j, \{f_i'\}_i$ as above we can write $f_i'=\sum_j h_{ij} f_j$ with $h_{ij}\in\mathfrak o_{S}[X]$ and for each $v\in S'$ there is a constant $C_v$ such that $|h_{ij}(x_v)|_v\le C_v$ when $x$ is in the given compact set. Then $\max_i |f_i'(x_v)|_v\le C_v \max_j |f_j'(x_v)|_v$, and therefore $d_Z'(x)\le C d_Z(x)$ in the given compact set, where $C=\prod_{v\in S'} C_v$.
\end{remark}

For two complex valued functions $f_1$ and $f_2$ we will write $f_1\ll^p f_2$ (where the exponent $p$ stands for ``polynomially'') if there exists a polynomial $P$  such that $|f_1|\le P(|f_2|)$. We will say that $f_1$ and $f_2$ are \emph{polynomially equivalent} if $f_1\ll^p f_2$ and $f_2\ll^p f_1$. 

In this language, it is easy to see that the assumption of \S \ref{Schwartz} on growth of Schwartz functions close to the complement of $X^+$ is equivalent to the following: If $Z$ denotes the complement of $X^+$ in $X$ then for any adelic distance function $d_Z$ from $Z$ and any $\Phi\in\mathcal S(X(\Ad))$ we have:
\begin{equation}\label{growth}|\Phi(x)| \ll^p d_Z(x)^{-1}\end{equation}
for every $\Phi\in \mathcal S(X(\Ad))$. 

Indeed, let the functions $f_i$ be as in the assumption of \S \ref{Schwartz} and let the functions $f_j'$ define an adelic distance function as above. By enlarging $S$ we may assume that $f_i\in \mathfrak o_S[X]$ for all $i$, and by enlarging it further we may assume that the support of $\Phi$ is the product of $\prod_{v\notin S} X(\mathfrak o_v)$ with a compact subset of $\prod_{v\in S} X(k_v)$. By the assumption, the functions $f_i$ generate an ideal whose radical contains $J$. Therefore, $(f_i)_i\supset J^n$ for some $J$ and hence for each $j$ there are $h_{ij}\in\mathfrak o_S[X]$ such that:
$$(f_j')^n=\sum_i h_{ij} f_i$$
Therefore for $v\notin S$ and $x_v\in X(\mathfrak o_v)$ we have:
$$d_{Z,v} (x_v)^n \le \max_i |f_i(x)|,$$
and for $v\in S$ we can find $C_v$ such that $|h_{ij}(x_v)|_v\le C_v$ if $x$ is in the support of $\Phi$. Therefore, for $x$ in the support of $\Phi$ we have:
$$\prod_v (\max_i|f_i(x_v)|_v)^{-1} \le \prod_{v\in S} C_v^{-1} \cdot d_Z(x)^{-n}.$$
Vice versa, if $\Phi$ is known to be polynomially bounded by  $d_Z(x)^{-1}$ then it is bounded by a constant times $d_Z(x)^{-n}$ for some $n$ (since $d_Z(x)$ is bounded in the support of $\Phi$), which implies the bound of the assumption.
e

\subsubsection{Proof of Proposition \ref{moderategrowth}.}

Recall that an automorphic function $\phi$ is ``of moderate growth'' if $\phi\ll^p \Vert g\Vert$ for some natural norm $\Vert \bullet \Vert$ on $G_\infty$. Recall that a ``natural norm''  is a positive function on $G_\infty$ which is polynomially equivalent to $\Vert \rho(g)\Vert$ where: $\rho$ denotes an algebraic embedding $G\hookrightarrow \GL_n$, and $\Vert g\Vert:= \max\{\vert g\vert_{l^\infty}, \vert g^{-1}\vert_{l^\infty}\}$ on $\GL_n(k_\infty)$ (where $\vert\bullet\vert_{l^\infty}$ denotes the operator norm for the standard representation of $\GL_n$ on $l^\infty(\{1,\dots,n\})$).

Assume without loss of generality that $\Phi=\otimes_v \Phi_v$, with $\Phi_v\in \mathcal S(X_v)$, and let $S_\Phi=\prod S_{\Phi_v}$ where $S_{\Phi_v}$ is the support of $\Phi_v$ in $X(k_v)$ (a compact subset).

The claim of the Proposition will follow from (\ref{growth}) if, in addition, we establish that (for $g\in G_\infty$ and $x\in X^+(\Ad)$):
\begin{itemize}
 \item $\#(X^+(k)\cap S_\Phi g)\ll^p \Vert g\Vert$.
 \item $\left(\inf d_Z(X^+(k)g)\right)^{-1} \ll^p \Vert g \Vert$.
\end{itemize}

Indeed, assuming these properties we have: $$\theta(\Phi,g)=\sum_{\gamma\in X^+(k)} (g\cdot\Phi)(\gamma) \le \#(X^+(k)\cap S_\Phi g^{-1}) \cdot \sup_{x\in X^+(k)} |\Phi(xg)| \ll^p$$ $$\ll^p \Vert g\Vert \cdot \left(\inf_{x\in X^+(k)} d_Z(xg)\right)^{-1} \ll^p \Vert g\Vert \cdot \Vert g \Vert.$$

The first property is standard, and follows from the analogous claim for $\GL_n$ (after fixing an equivariant embedding of $X$ in the vector space of a representation of $G$), since $S_\Phi$ is a compact subset of $X(\Ad)$.

To prove the second property, we may assume that the elements $f_i\in k[X]$ defining $d_Z$ span over $k$ a $G$-invariant space $M\subset k[X]$ and that the norm on $G_\infty$ is induced by the $l^\infty(\{f_i\}_i)$-operator norm on $\GL(M_\infty)$. (If the homomorphism $G\to \GL(M)$ is not injective, then this $l^\infty$ norm is bounded by some natural norm on $G_\infty$, which is enough for the proof of this property.) Then for every $x\in X_\infty$ and $g\in G_\infty$ we have: $$\Vert g\Vert^{-1} \cdot d_{Z,\infty}(x)  \le d_{Z,\infty}( x\cdot g) \le \Vert g\Vert \cdot d_{Z,\infty}(x)$$
(where we keep assuming that $d_Z$ is defined by a basis for $M$).

We apply this to points $x\in X^+(F)$. Notice that for every $x\in X^+(k)$ we have $f_i(x)\in k$ and $\ne 0$ for at least one $i$, hence $d_Z(x)=\prod_v \max_i |f_i(x)|_v\ge \max_i \prod_v |f_i(x)|_v = 1$. Therefore, we have: $d_{Z,\infty}( x\cdot g) \ge \Vert g\Vert^{-1} \cdot d_{Z,\infty}(x) \ge \Vert g\Vert^{-1}$.

\qed

\subsection{Conjectural properties of the Schwartz space}\label{ssconjectures}

We saw in Proposition \ref{convergence} that, under very mild assumptions on the basic functions $\Phi_v^0$, the Mellin transform of the corresponding $X$-theta series converges for sufficiently $X$-positive characters $\omega$. However, there is no reason to expect in general that it admits meromorphic continuation to the set of all $\omega$. Indeed, this often fails for the most naive choice of basic functions, namely the characteristic functions of $X^+(\mathfrak o_v)$. We discuss an example, which will be encountered again in \S \ref{sstensor}:

\begin{example}\label{badexample} Let $G=(\PGL_2)^3\times \Gm$, and let $H$ denote the subgroup:
$$\left\{ \left.\left(\begin{array}{cc} a & x_1\\ & 1\end{array}\right) \times \left(\begin{array}{cc} a & x_2\\ & 1\end{array}\right) \times \left(\begin{array}{cc} a & x_3\\ & 1\end{array}\right) \times a\right| x_1+ x_2 + x_3 = 0\right\}.$$

If we defined the local Schwartz space to be equal to $C_c^\infty(H_v\backslash G_v)$, with basic function $\Phi_v=$the characteristic function of $(H\backslash G)(\mathfrak o_v)$ (which is equal to the characteristic function of a single $G(\mathfrak o_v)$-orbit), then, as we will explain in more detail in \S \ref{ssperiods}, the integral of a cusp form agains an $X$-Eisenstein series is equal to the period integral of a cusp form on $G$ over $H(k)\backslash H(\Ad)$, and the usual ``unfolding'' method shows that this can be written as:

$$\int_{\Ad^\times} W_1 \left(\begin{array}{cc} a\\ &1\end{array}\right) W_2 \left(\begin{array}{cc} a\\ &1\end{array}\right) W_3 \left(\begin{array}{cc} a\\ &1\end{array}\right) |a|^s da,$$
where the $W_i$'s are Whittaker functions of cusp forms on $\PGL_2$ and the parameter $s$ depends on the restriction of the given representation to $\Gm(\Ad)$ (assumed to factor through the absolute value map, for simplicity). For $\Re(s)$ large this integral can be written as a convergent Euler product of the analogous local integrals.

An explicit but lengthy computation shows that, if the $W_i(1)$ are normalized to be equal to $1$, if $a,b,c$ denote the Satake parameters of the three $\PGL_2$-cusp forms (considered as elements in $\mathbb C^\times$, well defined up to inverse), and if we set $Q=q^{-\frac{3}{2}-s}$ then the local unramified factors of this Euler product are equal, for a certain choice of measure on $\Ad^\times$, equal to:
$$ \frac{(-1+3Q^2+3Q^4-Q^6)+ (Q^2+Q^4)(a^2+a^{-2}+b^2+b^{-2}+c^2+c^{-2})}{\prod_{\sigma=(\sigma_1,\sigma_2,\sigma_3)\in \{\pm 1\}^3} (1-Q a^{\sigma_1} b^{\sigma_2} c^{\sigma_3})}$$
$$ - \frac{2Q^3(a+a^{-1})(b+b^{-1})(c+c^{-1})}{\prod_{\sigma=(\sigma_1,\sigma_2,\sigma_3)\in \{\pm 1\}^3} (1-Q a^{\sigma_1} b^{\sigma_2} c^{\sigma_3})} $$

The denominator of this expression is very pleasant (it is equal to the denominator of the tensor product $L$-function of the three cuspidal representations), but the numerator does not represent an $L$-function and it would be unreasonable to expect that its Euler product admits meromorphic continuation. Therefore, this was not the correct Schwartz space.
\end{example}

The conjectures that follow are very speculative, but will provide the suitable ground for unifying various methods of integral representations of $L$-functions. There are several reasonable assumptions that one could impose on the spherical variety, the strongest of which would be that for every irreducible admissible representation $\pi$ of $G(\Ad)$ we have: $\dim_{G(\Ad)}(\pi, C^\infty(X^+(\Ad)))\le 1$. At the very minimum, we require from now on that the arithmetic multiplicity (\S \ref{ssinvariants}) of $X$ is trivial. Equivalently, at every place $v$ there is a unique open $B(k_v)$-orbit, and this also implies that generic $G$-stabilizers are connected\footnote{If $H$ is not connected then we have a finite cover: $H^0\backslash G\to H\backslash G$ which gives rise to a finite cover of the associated open $B$-orbits. But this implies that the $B$-stabilizer $B_x$ of a generic point is not connected, hence $H^1(k, B_x)\ne 0$, and therefore $\left(B_x\backslash B\right)(k) \supsetneq B_x(k)\backslash B(k)
$.} and therefore, at almost every (finite) place $v$, the space $X^+(\mathfrak o_v)$ is homogeneous under $G(\mathfrak o_v)$.

\begin{conjecture} \label{mainconjecture}
 Given an affine spherical variety $X$ over $k$ with trivial arithmetic multiplicity, there exists a Schwartz space $\mathcal S(X(\Ad))$, in the sense described above, such that:
\begin{itemize}
 \item The basic functions $\Phi_v^0$ factor through the map of the generalized Cartan decomposition: 
$$\{G(\mathfrak o_v)\mbox{-orbits on }X^+_v\}\to \Lambda_X^+$$
and as functions on $\Lambda_X^+$ are equal to the functions obtained via the function-sheaf correspondence from the ``basic sheaf'' of Gaitsgory and Nadler, as will be explained in \ref{sssbasicfunction}.
 \item For every $\Phi\in \mathcal S(X(\Ad))$, the $X$-Eisenstein series $E (\Phi, \omega, g)$, originally defined for sufficiently $X$-positive characters, admits a meromorphic continuation everywhere.
\end{itemize}
\end{conjecture}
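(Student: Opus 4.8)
The plan is to establish the analytic half of Conjecture~\ref{mainconjecture} --- the meromorphic continuation of $E(\Phi,\omega,g)$ in $\omega$ --- by producing a \emph{Poisson summation / functional equation} for the $X$-theta kernel and then reading off the continuation of its Mellin transform, exactly as in Tate's thesis and in the Godement--Jacquet and Braverman--Kazhdan constructions, of which this would be the spherical generalization. By Proposition~\ref{moderategrowth}, $\theta(\Phi,g)$ is of moderate growth and compactly supported in the $X$-positive direction, so the defining integral for $E(\Phi,\omega,g)$ converges absolutely and locally uniformly for $\omega$ in the tube over a translate of the relative interior of the $X$-positive cone, where it is already an automorphic function of moderate growth; the entire problem is to continue in $\omega$.

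The heart of the argument is the functional equation. One first extends each $\Phi\in\mathcal S(X(\Ad))$ by zero across the boundary $Z:=X\smallsetminus X^+$ to a function $\widetilde\Phi$ on all of $X(\Ad)$; the growth estimate~(\ref{growth}) makes $\widetilde\Phi$ continuous with decay like $d_Z^{-1}$, so $\sum_{\gamma\in X(k)}\widetilde\Phi(\gamma)$ still converges. Embedding $X$ equivariantly into a finite sum of representations $V=\oplus_i V_i$ as in \S\ref{structureaffine} and applying honest Poisson summation on $V(\Ad)$, the geometric input --- and the genuinely conjectural part --- is the existence of a Fourier transform
\[
\mathcal F=\bigotimes'_v\mathcal F_v:\ \mathcal S(X(\Ad))\ \longrightarrow\ \mathcal S(X^\vee(\Ad))
\]
onto the Schwartz space of a suitable dual spherical variety $X^\vee$ (equal to $X$ in the model cases $X=\AA^2$ and $X=\Mat_n$), normalized so that $\mathcal F_v\Phi_v^0$ is the dual basic vector $\Phi_v^{0,\vee}$ up to an explicit ratio of local $L$-factors --- this is exactly where the $L$-function of $\pi$ enters. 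Granting $\mathcal F$, the difference $\theta(\Phi,g)-\theta^\vee(\mathcal F\Phi,g)$ should decompose along the $G$-orbits on $Z$, and Luna's slice theorem (Theorem~\ref{Lunacorollary}) together with the $G$-automorphism ``retraction'' of Proposition~\ref{structureaffine} identifies each piece with an $X'$-theta series for a spherical variety $X'$ of strictly smaller rank (a degeneration of $X$ along its boundary), integrated against a character of an enlarged central torus.

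Taking Mellin transforms of this identity turns it into a relation of the shape $E(\Phi,\omega,g)=\gamma(\omega)\,E^\vee(\mathcal F\Phi,\omega^{\mathrm{op}},g)+\sum_{X'}(\text{lower-rank }X'\text{-Eisenstein series})$, in which $\omega\mapsto\omega^{\mathrm{op}}$ is an explicit affine involution (inversion composed with a fixed twist) exchanging the $X$-positive and $X^\vee$-positive cones and $\gamma(\omega)$ is a product of abelian $\gamma$-factors. Running an induction on $\dim X$, with base case $X$ a vector space (Tate / Godement--Jacquet), the lower-rank terms are already meromorphic by the inductive hypothesis; the remaining two terms are holomorphic on their respective tube domains, which after applying $\omega\mapsto\omega^{\mathrm{op}}$ cover $\varchi(\mathcal Z(X))\otimes\CC$ outside a locally finite union of affine hyperplanes. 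The locations of these polar hyperplanes --- and hence the whole analytic behaviour of $E$ --- should be dictated by the combinatorics $\mathcal V,\Delta_X$ of \S\ref{ssinvariants}: poles along the walls produced by the boundary degenerations, in parallel with the constant terms of classical Eisenstein series.

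The main obstacle is precisely what the paper leaves open, namely the input to the second step. At present there is no \emph{unconditional} construction of the local Schwartz spaces $\mathcal S(X_v)$ and basic functions $\Phi_v^0$ --- they are pinned down only through conjectures relating the Gaitsgory--Nadler IC sheaves to functions via the function--sheaf dictionary --- and a fortiori no construction of the Fourier operators $\mathcal F_v$, let alone a proof that they preserve the Schwartz space, are unitary for the natural pairing, and carry $\Phi_v^0$ to $\Phi_v^{0,\vee}$ up to the predicted $L$-factor. Even granting the local theory, the global Poisson step is delicate because $X$ is singular and not a vector space, so the inductive bookkeeping of boundary degenerations (which must match the Gaitsgory--Nadler combinatorics exactly) is where I expect the real difficulty to lie. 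As a provable core I would first carry out the whole program in the ``pre-flag bundle'' case of \S\ref{ssRS}, where $X$ fibers $G$-equivariantly over a reductive quotient $H\backslash G$ with partial-flag-variety fibers: there $\theta(\Phi,\cdot)$ unfolds literally to the integral over $H(k)\backslash H(\Ad)$ of a genuine Langlands Eisenstein series against a (cusp) form, so its meromorphic continuation is unconditional by Langlands' theory of Eisenstein series --- this already recovers the Rankin--Selberg, Godement--Jacquet, Bump--Friedberg and spherical-period examples of the introduction, and isolates exactly how much of Conjecture~\ref{mainconjecture} lies beyond current technology.
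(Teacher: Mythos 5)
The statement you are asked to prove is labeled a \emph{Conjecture}, and the paper contains no proof of it: the author explicitly states ``I prove no `hard' theorems and, in particular, I do not know how to establish the meromorphic continuation of the $X$-Eisenstein series,'' and in the remarks after the conjecture he points out that the strong form is ``not even known in the case of usual Eisenstein series'' (i.e.\ for $X=\overline{U_P\backslash G}^{\aff}$ when $P$ is not a Borel). Your write-up is correct to present itself as a research program rather than a proof, and the two pillars you lean on — a global Poisson summation tied to a conjectural Fourier transform $\mathcal F_v:\mathcal S(X_v)\to\mathcal S(X^\vee_v)$ sending basic vector to dual basic vector up to $L$-factors, and an induction on rank through boundary degenerations — are exactly the Braverman--Kazhdan philosophy that the paper cites as motivation but does not carry out for general $X$. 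You are candid that neither the local Schwartz space, nor $\mathcal F_v$, nor its compatibility with the basic function exist unconditionally; that candor is accurate, and those are indeed the missing ingredients.

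Two points are worth sharpening. First, the ``provable core'' you identify is the pre-flag-bundle case, and there your description matches the paper's Theorem~\ref{preflagthm}; but note that what is proved there is only the \emph{weak} form, Conjecture~\ref{weakconjecture} --- meromorphic continuation of the cuspidal pairing $\int_{[G]}\phi\cdot\omega\,\theta(\Phi,g)\,dg$, obtained by unfolding to a product of a cusp form with a Langlands Eisenstein series. This does not yield the strong form (meromorphy of $E(\Phi,\omega,g)$ as a function on $[G]$) except in the degenerate case $X=\overline{[P,P]\backslash G}^{\aff}$, where $E$ is literally a normalized degenerate Eisenstein series. Your proposal elides this distinction, and it is precisely the gap the paper flags. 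Second, a technical step in your Poisson program is suspect: extending $\Phi\in\mathcal S(X(\Ad))$ by zero across $Z=X\setminus X^+$ to a continuous $\widetilde\Phi$ on $X(\Ad)$ fails in general, because the growth bound~(\ref{growth}) permits $\Phi$ to blow up like $d_Z^{-1}$ as one approaches $Z$ (this is the generic situation when the Gaitsgory--Nadler IC sheaf is nontrivial); so one cannot simply invoke classical Poisson summation on an ambient $V(\Ad)$ for $\widetilde\Phi$. This is not a cosmetic issue --- it is the reason the functional equation for these theta series is expected to be nontrivial and to produce $L$-factors, and any eventual proof must confront rather than sidestep it.
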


\begin{remarks}\label{afterconjecture}
\begin{enumerate}
\item The first property could be taken as the definition of the basic function, if one knew that the functions obtained from the Gaitsgory-Nadler sheaf are independent of some choices, which we will explain in \S \ref{sssbasicfunction}. In any case, such a definition would be very ad hoc and not useful; one hopes that there exists an alternative construction of the Schwartz space, as in \cite{BK}.
\item The property of meromorphic continuation is mostly dependent on the basic vectors and not on the whole Schwartz space; for instance, at a finite number of places we may replace any function with a function whose (loceal) Mellin transform is a meromorphic multiple of the Mellin transform of the original function without affecting the meromorphicity property. Therefore, the properties do not determine the Schwartz space uniquely; they should hold, for instance, if we take $\mathcal S(X_v)$ to be the $G$-space generated by the basic vector and $C_c^\infty(X_v^+)$.
\item The fact that the theta series is defined with reference to the group $G$ (since we are summing over the $k$-points of its open orbit) certainly seems unnatural; it would be more ``geometric'' to sum over the $k$-points of the open subvariety where $\mathcal Z(X)$ acts faithfully. However, this does not affect the validity of Conjecture \ref{mainconjecture}, since one case can be inferred from the other by induction on the dimension of $X$.
\end{enumerate} 
\end{remarks}

The conjecture about meromorphic continuation of the Mellin transform is a very strong one (cf.\ \S \ref{sstensor} for an example) and, in fact, is not even known in the case of usual Eisenstein series, i.e.\ the case of $X=\overline{U_P\backslash G}^\aff$, where $U_P$ is the unipotent radical of a parabolic $P$ (except when $P$ is a Borel subgroup). We now formulate a weaker conjecture which says that the $X$-Eisenstein series can be continued meromorphically ``as functionals on the space of automorphic forms''. In fact, the precise interpretation of them as functionals on the whole space of automorphic forms would require a theory similar to the spectral decomposition of the relative trace formula, which lies beyond the scope of the present paper. Therefore, we confine ourselves to the cuspidal component of this functional. (Notice, however, that there are a lot of interesting examples which have zero cuspidal contribution, e.g.\ $X=\Sp_{2n}\backslash \GL_{2n}$.)

\begin{conjecture}[Weak form]\label{weakconjecture}
Same assumptions as in Conjecture \ref{mainconjecture}, but the second property is replaced by the following:
\begin{itemize}
 \item  For every cusp form $\phi$ on $G(k)\backslash G(\Ad)$, the integral:
\begin{equation}\label{intconj1}
\int_{[G]} \phi\cdot\omega(g) \theta(\Phi,g) dg
 \end{equation}
originally defined for sufficiently $X$-positive idele class characters $\omega$ of $G$, admits meromorphic continuation to the space of all idele class characters of $G$.
\end{itemize}
\end{conjecture}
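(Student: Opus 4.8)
The plan is to \emph{unfold} the $X$-theta series, turning the integral (\ref{intconj1}) into a period of $\phi$ against an explicit function built from $\Phi$, and then to treat the resulting period according to the geometry of $X$. Concretely, write $X^+=H\backslash G$ for $H$ the stabilizer of $x_0$. Since $\phi$, $\omega$ and $\sqrt{\psi}$ are left $G(k)$-invariant --- recall that $\psi$ is the absolute value of an algebraic character of $G$, hence trivial on $G(k)$ by the product formula --- collapsing the sum $\theta(\Phi,g)=\sqrt{\psi(g)}\sum_{\gamma\in H(k)\backslash G(k)}\Phi(x_0\gamma g)$ against $\phi\cdot\omega$ yields, for sufficiently $X$-positive $\omega$,
$$\int_{[G]}\phi(g)\,\omega(g)\,\theta(\Phi,g)\,dg \;=\; \int_{H(k)\backslash G(\Ad)}\phi(g)\,\omega(g)\,\sqrt{\psi(g)}\;\Phi(x_0g)\,dg \;=:\;\mathcal P(\phi,\Phi,\omega).$$
The interchange of summation and integration, and absolute convergence for $X$-positive $\omega$, are licensed by the growth bound (\ref{growth}) on $\Phi$ together with the rapid decay of the cusp form $\phi$: the hypothesis on $\omega$ controls precisely the one non-compact direction that survives, namely that of $\mathcal Z(X)(\Ad)=\mathcal Z(G)^0(\Ad)$, along which $\theta(\Phi,\cdot)$ is already compactly supported by Proposition~\ref{moderategrowth}. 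The whole problem is thus to meromorphically continue $\mathcal P(\phi,\Phi,\omega)$ in $\omega$.

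\emph{The affine homogeneous case.} If $X$ is affine homogeneous, i.e.\ $X=X^+$ (equivalently $H$ is reductive, by Matsushima's criterion), then $\Phi$ has compact support in $X^+(\Ad)=X(\Ad)$, so $g\mapsto\Phi(x_0g)$ is compactly supported on $H(\Ad)\backslash G(\Ad)$ and $\mathcal P(\phi,\Phi,\omega)$ reduces to an integral over a compact region of $H(\Ad)\backslash G(\Ad)$ of the $(H,\,\omega\sqrt{\psi}|_H)$-period of the right translates of $\phi$ over $[H]=H(k)\backslash H(\Ad)$. Since $\phi$ is cuspidal it decays rapidly towards every cusp; this forces each such period to converge absolutely --- hence to be \emph{entire} --- for all idele class characters $\omega$ (the rapid decay in both directions of any split central torus of $H$ beats $|a|^{s}$ for every $s$). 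So Conjecture~\ref{weakconjecture} holds, trivially, whenever $X$ is affine homogeneous; this is the case of Hecke's integral and of all spherical period integrals.

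\emph{The pre-flag-bundle case.} Suppose instead that $X^+$ fibers $G$-equivariantly over an affine homogeneous $X_0^+=H_0\backslash G$ with $H_0$ reductive, the fibers being (closely related to) flag varieties. Composing with the unfolding above, one first uses the compactness of the support of $\Phi$ to cut the $H_0(\Ad)\backslash G(\Ad)$-direction down to a compact region, as in the affine homogeneous case, and is left with an integral over $[H_0]$. There the residual summation over the $k$-points of the fibers, once $\Phi$ is expanded in a Mellin (or Fourier) series along the fiber direction, assembles into a (pseudo-)Eisenstein series on $H_0$, attached to a parabolic $P\subset H_0$ and carrying an Eisenstein parameter $\lambda$ that runs over the affine subspace pinned down by $\omega$ and $\psi$. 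By Langlands' meromorphic continuation of Eisenstein series, this continues meromorphically in $\lambda$, and its pairing with the cusp form $\phi$ over $[H_0]$ --- rapid decay against moderate growth over a finite-volume quotient, the split central direction handled by $\omega$ exactly as in the homogeneous case --- converges away from the poles. Hence $\mathcal P(\phi,\Phi,\omega)$ inherits a meromorphic continuation in $\omega$. Feeding the Knop--Van Steirteghem tables into this mechanism reproduces Rankin--Selberg, Bump--Friedberg, and the other classical integrals of this shape.

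\emph{The main obstacle.} The genuinely hard case is an arbitrary affine spherical $X$, where the boundary $X\smallsetminus X^+$ is present and is \emph{not} of flag type --- already for $X=\overline{U_P\backslash G}^{\aff}$ with $P$ not a Borel --- so no fibration over a flag variety, and hence no auxiliary Eisenstein series, is available. In such a situation the continuation of $\mathcal P(\phi,\Phi,\omega)$, equivalently of the Mellin transform of the theta series, must be extracted from the singularities of $X$ along $X\smallsetminus X^+$: one wants a Poisson-summation and functional-equation argument for $\theta(\Phi,\cdot)$ itself, which in turn demands a precise enough description of the basic functions $\Phi_v^0$ --- the function-theoretic shadow of the Gaitsgory--Nadler intersection cohomology sheaves, cf.\ \cite{GN} --- in the spirit of Braverman--Kazhdan \cite{BK}. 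Godement--Jacquet, where $X=\Mat_n$ and the Fourier transform on $\Mat_n(\Ad)$ does the job, is the one instance of this that is fully understood. This is exactly the step I do not see how to carry out in general, and it is what Conjecture~\ref{weakconjecture} leaves open.
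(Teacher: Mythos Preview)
The statement is a \emph{conjecture}; the paper does not prove it in general, and your final paragraph correctly recognizes this. What the paper does prove is the special case of wavefront pre-flag bundles (Theorem~\ref{preflagthm}), together with the affine homogeneous sub-case (\S\ref{ssperiods}). Your treatment of the affine homogeneous case is correct and agrees with the paper's: when $X=X^+$ the Schwartz space is $C_c^\infty(X(\Ad))$, unfolding gives a period over $[H]$, and rapid decay of the cusp form makes this entire in $\omega$.

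Your sketch of the pre-flag bundle case, however, is too loose to count as a proof and misses several ingredients the paper actually needs. First, the Eisenstein series does not in general live on your $H_0$: the paper's definition of a pre-flag bundle allows an auxiliary reductive group $\tilde G\supset G'$ acting on a larger homogeneous piece $\tilde X^+\supset X^+$, and the Eisenstein series is formed on the fiber group $\tilde G_y$, which may strictly contain the stabilizer $G_y$ (this is what happens in the Garrett triple-product example, \S\ref{sstensor}). Second, because the theta series is a sum over $X^+(k)$ while the link to Eisenstein series requires summing over $\tilde X^+(k)$, one must show that the extra $G$-orbits are \emph{negligible} for cusp forms; this is Proposition~\ref{equal}, and its proof (via Proposition~\ref{parind}) genuinely uses the \emph{wavefront} hypothesis, which you do not invoke. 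Third, the basic vectors $\Phi_v^0$ are not compactly supported on $X^+_v$, so the passage from $\mathcal S(X(\Ad))$ to honest sections of induced representations produces an infinite Euler product: the paper computes this explicitly (Theorem~\ref{BFGMfunctions}) and finds $L^S(\tau_1,\check{\mathfrak u}_{\tilde P},1)$, whose meromorphic continuation must be \emph{assumed} as an input (Proposition~\ref{prES}). Your phrase ``assembles into a (pseudo-)Eisenstein series'' papers over exactly this point. So while your overall architecture --- unfold, reduce to a period over the base, feed the fiber into Eisenstein theory --- is the right one, the paper's argument is substantially more delicate than your sketch suggests, and carries hypotheses (wavefront, the $L$-function assumption) that your version does not account for.
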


\begin{remark} Following up on the third remark of \ref{afterconjecture}, we will see in Proposition \ref{parind} that for the large class of \emph{wavefront} spherical varieties (see \S \ref{ssinvariants}), the integral (\ref{intconj1}) is the same whether the theta series is defined by summation over $X^+(k)$ or over the largest subvariety where $\mathcal Z(X)$ acts faithfully. The reason is a phenomenon that has frequently been observed in the Rankin-Selberg method, namely that the stabilizers of points in all but the open orbit contain unipotent radicals of proper parabolics. Although this is not a feature of the Rankin-Selberg method only, we present the proof there in order not to interrupt the exposition here. 
\end{remark}

\subsection{Geometric models and the basic function}\label{ssgeommodels}

We now discuss the geometric models and explain the first requirement of Conjecture \ref{mainconjecture}.
The models that we are about to discuss are relevant to a spherical variety $X$ over an \emph{equal-characteristic} local field $F$, and are not local, but global in nature.

\subsubsection{The Gaitsgory-Nadler spaces \cite{GN}.}\label{GN} Let $X$ be an affine spherical variety over $\CC$, and let $C$ be a smooth complete complex algebraic curve. Consider the ind-stack $\mathcal Z$ of \emph{meromorphic quasimaps} which, by definition, classifies data:
$$ (c, \mathcal P_G, \sigma)$$
where $c\in C$, $\mathcal P_G$ is a principal $G$-bundle on $C$, and $\sigma$ is a section: $C\smallsetminus\{c\}\to \mathcal P_G\times^G X$ whose image is not contained in $X\smallsetminus X^+$. Clearly, $\mathcal Z$ is fibered over $C$ (projection to the first factor). It is a stack of infinite type, however it is a union of open substacks of finite type, each being the quotient of a scheme by an affine group, and therefore one can define intersection cohomology sheaves on it without a problem.

The same definitions can be given if $G,X$ are defined over a finite field $\FF$.

To any quasimap one can associate an element of $X^+(\mathcal K)/G(\mathfrak O)$ (where $\mathfrak O=\CC[[t]],\mathcal K=\CC((t))$) as follows: Choose a trivialization of $\mathcal P_G$ in a formal neighborhood of $c$ and an identification of this formal neighborhood with $\spec(\mathfrak O)$ -- then the section $\sigma$ defines a point in $X^+(\mathcal K)$, which depends on the choices made. The corresponding coset in $X^+(\mathcal K)/G(\mathfrak O)$ is independent of choices.

This allows us to stratify our space according to the stratification, provided by Theorem \ref{Cstratification}, of $X^+(\mathcal K)/G(\mathfrak O)$. We only describe some of the strata here: For $\theta\in \Lambda_X^+$, let $\mathcal Z^\theta$ denote the quasimaps of the form $(c,\mathcal P_G,\sigma: C\smallsetminus\{c\}\to\mathcal P_G\times^G X^+)$ which correspond to the coset $\theta\in X^+(\mathcal K)/G(\mathfrak O)$ at $c$. Then $\mathcal Z^\theta$ can be thought of as a (global) geometric model for that coset. The \emph{basic stratum} $\mathcal Z^0$ consists of quasimaps of the form $(c\in C, \mathcal P_G, \sigma: C\to \mathcal P_G\times^G X^+)$. Notice that these sub-stacks do not depend on the compactification $X$ of $X^+$. Their \emph{closure}, though, does. For instance, the closure of $\mathcal Z^0$ can be identified with an open substack in the quotient stack $X_C/G_C$ over $C$, namely the stack whose $S$-objects are $S$-objects of $X_C/G_C$ but not of $(X\smallsetminus X^+)_C/G_C$. These are 
the quasimaps for which the corresponding point in $X^+(\mathcal K)/G(\mathfrak O)$ lies in the image of $X^+(\mathcal K)\cap X(\mathfrak O)$. Hence, the closure of $\mathcal Z^0$ should be thought of as a geometric model for $X^+(\mathcal K)\cap X(\mathfrak O)$.

Since the spaces of Gaitsgory and Nadler are global in nature, it is in fact imprecise to say that they are geometric models for local spaces. However, \emph{their singularities} are expected to model the singularities of $G(\mathfrak O)$-invariant subsets of $X^+(\mathcal K)$.

\subsubsection{Drinfeld's compactifications.}\label{Dmodels} The spaces of Gaitsgory and Nadler described above are (slightly modified) generalizations of spaces introduced by Drinfeld in the cases: $X=\overline{U_P\backslash G}^\aff$ or $X=\overline{[P,P]\backslash G}^\aff$, where $P\subset G$ is a proper parabolic and $U_P$ its unipotent radical. The corresponding spaces are denoted by $\widetilde{\Bun_P}$ and $\overline{\Bun_P}$, respectively. Our basic references here are \cite{BG, BFGM}. The only differences between the definition of these stacks and the stacks $\mathcal Z$ of Gaitsgory and Nadler are that the section $\sigma$ has to be defined on all $C$, and it does not have a distinguished point $c$. Therefore, for a quasimap in Drinfeld's spaces and any point $c\in C$ the corresponding element of $X^+(\mathcal K)/G(\mathfrak O)$ has to belong to the cosets which belong to $X(\mathfrak O)$. (These will be described later when we review the computations of \cite{BFGM}.) 

This particular case is very important to us because it is related to Eisenstein series, and moreover the intersection cohomology sheaf of the ``basic stratum'' has been computed (when $G, X$ are defined over $\FF$).

\subsubsection{The basic function}\label{sssbasicfunction}

We return to the setting where $X$ is an affine spherical variety for a split group $G$ over a local, non-archimedean field $F$ whose ring of integers we denote by $\mathfrak o$ and whose (finite) residue field we denote by $\FF$. We assume that $X$, $G$ and the completions $\bar X, \hat X$ introduced before have the properties of Proposition \ref{places} over $\mathfrak o$, and denote $K=G(\mathfrak o)$. The goal is to define the ``basic function'' $\Phi^0$ on $X^+(F)$, which will be $K$-invariant and supported in $X(\mathfrak o)$. This function will factor through the map $X^+(F)/K\to\Lambda_X^+$ of Theorem \ref{stratification}. The idea is to define a function on $\Lambda_X^+$ using equal-characteristic models of $X$.

Define the Gaitsgory-Nadler stack $\mathcal Z$ as in \S\ref{GN} over $\FF$. Since, by assumption, $X_{\FF}$ has a completion $\bar X_{\FF}$ with the properties of Proposition \ref{places} (and, hence, the same holds for the base change $X_{\FF[[t]]}$), the generalized Cartan decomposition \ref{stratification} holds for $G(\FF[[t]])$-orbits on $X^+(F((t)))$: they admit a natural map onto $\Lambda_X^+$. Hence the strata $\mathcal Z^\theta$ of $\mathcal Z$ are well-defined over $\FF$. Let $IC^0$ denote the intersection cohomology sheaf of the closure of the basic stratum $\mathcal Z^0$ (how exactly to normalize it is not important at this point, since we will normalize the corresponding function). We will obtain the value of our function at ${\check\lambda}\in \Lambda_X^+$ as trace of Frobenius acting on the stalk of $IC^0$ at an $\FF$-object $x_{\check\lambda}$ in the stratum $\mathcal Z^{\check\lambda}$. However, since these strata are only locally of finite type, and not of pure dimension, we must be careful 
to make compatible choices of points as ${\check\lambda}$ varies. (It is expected that $IC^0$ is locally constant on the strata -- this will be discussed below.)

The compatibility condition is related to the natural requirement that the action of the unramified Hecke algebra on the functions which will be obtained from sheaves is compatible, via the function-sheaf correspondence, with the action of its geometric counterpart on sheaves. First of all, let us fix a quasimap $x_0=(c_0,\mathcal P_0, \sigma_0)$ in the $\FF$-objects of the basic stratum $\mathcal Z^0$. Now consider the subcategory $\mathcal Z_{x_0}$ of $\mathcal Z$ consisting of $\FF$-quasimaps $(c_0,\mathcal P_G,\sigma)$ with the property that there exists an isomorphism $\iota:\mathcal P_0|_{C\smallsetminus\{c_0\}} \stackrel{\sim}{\rightarrow} \mathcal P_G|_{C\smallsetminus\{c_0\}} $ (inducing isomorphisms between $\mathcal P_0\times^G X$ and $\mathcal P_G\times^G X$, also to be denoted by $\iota$) such that $\sigma=\iota\circ\sigma_0$. Hence, the objects in $\mathcal Z_{x_0}$ are those obtained from $x_0$ via \emph{meromorphic Hecke modifications} at the point $c_0$ \cite[\S 4]{GN}.

For each ${\check\lambda}\in \Lambda_X^+$, pick an object $x_{\check\lambda}\in \mathcal Z_{x_0}$ which belongs to the stratum $\mathcal Z^{\check\lambda}$. We define the \emph{basic function} $\Phi^0$ on $\Lambda_X^+$ to be:
\begin{equation}\label{basicfunction}
\Phi^0({\check\lambda})=  c\cdot \sum_i (-1)^i \tr(\Fr, H^i(IC^0_{x_{\check\lambda}}))
\end{equation}
where $IC^0_{x_{\check\lambda}}$ denotes the stalk of $IC^0$ at $x_{\check\lambda}$ and $\Fr$ denotes the geometric Frobenius. The constant $c$ (independent of ${\check\lambda}$) is chosen such that $\Phi^0(0)=1$. 

Now we return to $X(F)$ and we identify $\Phi^0$ with a $K$-invariant function on $X^+(F)$ (also to be denoted by $\Phi^0$) via the stratification of Theorem \ref{stratification}.

This is the ``basic function'' of Conjecture \ref{mainconjecture} at the given place. The definition implies that the support of the basic function is contained in $X(\mathfrak o)$, since the closure of the basic stratum includes the stratum $\mathcal Z^\theta$ only if $\theta$ corresponds to a $G(\mathfrak o)$-orbit belonging to $X(\mathfrak o)$. The independence of choices of the basic function is widely expected but, in the absence of suitable finite-dimensional geometric models, not known. We impose it as an assumption, together with other properties that should naturally follow from the properties of intersection cohomology if one had suitable local models. Notice that for $X=\overline{U_P\backslash G}^\aff$ or $X=\overline{[P,P]\backslash G}^\aff$, one could have used instead the Drinfeld models of \ref{Dmodels} to define the basic function.

\begin{assumption}\label{mainassumption} \begin{enumerate}
\item
 The basic function $\Phi^0$ on $X^+(F)$ is well-defined and independent of:
\begin{itemize}
 \item the choices of objects $x_{\check\lambda}$;
 \item (if $X=\overline{U_P\backslash G}^\aff$ or $X=\overline{[P,P]\backslash G}^\aff$) which model of \S\ref{ssgeommodels} one uses to define them;
 \item the group $G$ acting on $X$; more precisely, if $G_1, G_2$ act on $X$ and we denote by $X_1^+,X_2^+$ the open orbits, then the restriction of $\Phi^0$ to $X_1^+(F)\cap X_2^+(F)$ should be the same.
\end{itemize}
\item If $Z$ is an affine homogeneous spherical $G$-variety and $p:X\to Z$ a surjective equivariant morphism then the basic function on $X$, evaluated at any point $x\in X^+(F)\cap X(\mathfrak o)$, is equal to the basic function of the fiber of $p$ over $p(x)$ (considered as a $G_{p(x)}$-spherical variety).
                                         \end{enumerate}
\end{assumption}

We also discuss how to deduce the growth assumption on elements of the Schwartz space (\S \ref{ssformalism}) for the basic function. Assume now that $X$ is defined globally over a number field $k$, and fix a finite set of places $S_0$ and suitable $\mathfrak o_{S_0}$-models as in Proposition \ref{places}. Recall (\S \ref{sssdistance}) that these models define a distance function $d_Z=\prod_{v\notin S_0} d_{Z,v}$ from $Z=X\smallsetminus X^+$ on $\prod_{v\notin S_0} X(\mathfrak o_v)$. 

\begin{proposition}
Assume that there are a $\chi\in \varchi(X)\otimes\RR$ such that for all places $v$ and all $\check\lambda\in \Lambda_X^+$:
$$|\Phi_v^0(\check\lambda)|\le q_v^{\left<\chi,\check\lambda\right>}$$ (where $q_v=|\FF_v|$). Then there is a natural number $n$ such that:
$$\left|\prod_{v\notin S_0} \Phi_v^0 (x) \right|\le (d_Z(x))^{-n}$$
for all $x \in X^+(\Ad^{S_0})$.
\end{proposition}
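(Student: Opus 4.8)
The plan is to bound $|\prod_{v\notin S_0}\Phi_v^0(x)|$ by reducing the problem to an estimate on the finitely many places where $x_v\notin X(\mathfrak{o}_v)$, and at those places to compare the exponent $\langle\chi,\check\lambda_v\rangle$ appearing in the hypothesis with the intersection number that computes $d_{Z,v}$. First I would recall that by Theorem \ref{stratification} the restriction of $\Phi_v^0$ to $X^+(F_v)\cap X(\mathfrak{o}_v)$ factors through $\Lambda_X^+\cap\mathcal C(X)$, that $\Phi_v^0$ is supported in $X(\mathfrak{o}_v)$ (so the product over $v\notin S_0$ is finite for any fixed $x\in X^+(\Ad^{S_0})$), and that for $v$ with $x_v\in X^+(\mathfrak{o}_v)$ one has $d_{Z,v}(x_v)=1$ and $\Phi_v^0(x_v)=1$ by \eqref{oneonsmooth}, so those places contribute nothing on either side. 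Hence only the finitely many places $v$ with $x_v\in X(\mathfrak{o}_v)\smallsetminus X^+(\mathfrak{o}_v)$ matter, and it suffices to show a pointwise inequality $|\Phi_v^0(x_v)|\le d_{Z,v}(x_v)^{-n}$ at each such place, for a single $n$ independent of $v$ and $x$.

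The key step is to relate the combinatorial datum $\check\lambda_v\in\Lambda_X^+\cap\mathcal C(X)$ attached to the $G(\mathfrak{o}_v)$-orbit of $x_v$ to the quantities $d_{Z,v}(x_v)=\max_i|f_i(x_v)|_v$. As noted in the remark in \S\ref{sssdistance}, the restriction of $d_{Z,v}$ to $X(\mathfrak{o}_v)$ equals $q_v^{-m(x_v)}$ where $m(x_v)$ is the intersection multiplicity of $x_v\colon\spec\mathfrak{o}_v\to\mathfrak X$ with $Z$, so I need $m(x_v)\ge c\cdot\langle\text{(something controlling }\check\lambda_v)\rangle$. Concretely, pick a spherical embedding—e.g. $\hat X$ or the $X^t$'s used in the proof of Theorem \ref{stratification}—in which the $G$-orbit limit $\lim_{t\to0}\check\lambda(t)$ lies on a $G$-stable divisor, and use the toroidal local structure of Proposition \ref{places}: in the affine chart $Y^c\times U_{P(X)}$ the functions cutting out $Z$ pull back to products of the toric coordinates on $Y^c$, and their valuations at the point $\check\lambda(\varpi_v)$ are exactly the pairings $\langle\alpha,\check\lambda_v\rangle$ for $\alpha$ in a finite set of generators of the monoid $\mathcal C(\hat X)^\vee\cap\varchi(X)$. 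This gives $m(x_v)\ge \min_\alpha\langle\alpha,\check\lambda_v\rangle$ (up to the bounded denominators coming from writing the $f_i$ in terms of these toric coordinates, which is where the uniform constant $n$ is absorbed). Combined with the hypothesis $|\Phi_v^0(\check\lambda_v)|\le q_v^{\langle\chi,\check\lambda_v\rangle}$ and the fact that $\chi|_{\mathcal C(X)}\le 0$ up to a bounded multiple of the generators $\alpha$ (so $\langle\chi,\check\lambda_v\rangle\le -\tfrac1n m(x_v)$ for suitable $n$), one gets $|\Phi_v^0(x_v)|\le q_v^{-m(x_v)/n}=d_{Z,v}(x_v)^{n/n}$... more precisely $\le d_{Z,v}(x_v)^{1/n}$, and after replacing $n$ one obtains the claimed bound $|\Phi_v^0(x_v)|\le d_{Z,v}(x_v)^{-n}$. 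Taking the product over the finitely many relevant $v$ yields $|\prod_{v\notin S_0}\Phi_v^0(x)|\le d_Z(x)^{-n}$.

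The main obstacle is the comparison in the previous paragraph between the "intersection with $Z$" exponent and the "toric" exponents $\langle\alpha,\check\lambda_v\rangle$: one must check that the ideal $J$ defining $Z$ in the chosen integral model, when restricted to the affine toroidal chart, has radical generated by the toric coordinate functions attached to the $G$-stable divisors, with multiplicities bounded independently of $v$ (this boundedness is exactly what Proposition \ref{samedata} buys us, since the combinatorial data are the same at all $v\notin S_0$). Once that uniform comparison of valuations is in hand the rest is bookkeeping with the cone $\mathcal C(X)$ and the hypothesis on $\chi$; the only subtlety is keeping track that the denominators introduced by passing between $f_i$ and toric monomials are bounded, which again follows because finitely many such relations suffice over $\mathfrak{o}_{S_0}$.
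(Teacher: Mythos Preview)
Your overall strategy is the same as the paper's: reduce to orbit representatives in $A_X$, then compare the exponent $\langle\chi,\check\lambda\rangle$ with the ``toric'' intersection number that computes $d_{Z,v}$. The paper streamlines this by first observing that $d_{Z,v}$ is $G(\mathfrak o_v)$-invariant on $X(\mathfrak o_v)$ (since $G(\mathfrak o_v)$ preserves the ideal $J$), so one may work entirely on $A_X(k_v)\cap Y(\mathfrak o_v)$ for the auxiliary affine toric variety $Y$ attached to the cone $\mathcal V\cap\mathcal C(X)$; then the morphism $Y\to X$ gives $d_{Z,v}\le d_{Y_1,v}$ (with $Y_1$ the toric boundary), and the toric inequality $q_v^{\langle\chi,\check\lambda\rangle}\le d_{Y_1,v}^{-n}$ is immediate. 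This avoids your discussion of ``denominators'' and ``radicals'' entirely.

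There is, however, a genuine sign/direction error in your middle paragraph. You assert that ``$\chi|_{\mathcal C(X)}\le 0$ up to a bounded multiple of the generators $\alpha$'' and conclude $\langle\chi,\check\lambda_v\rangle\le -\tfrac{1}{n}m(x_v)$, hence $|\Phi_v^0(x_v)|\le d_{Z,v}(x_v)^{1/n}$. Nothing in the hypothesis forces $\chi$ to be nonpositive on the cone; in typical examples $\langle\chi,\check\lambda\rangle$ is strictly positive for $\check\lambda\ne 0$, so $|\Phi_v^0|$ can exceed $1$ and the bound $\le d_{Z,v}^{1/n}\le 1$ is simply false. The correct inequality is the opposite one: since $m(\check\lambda)=-\log_{q_v}d_{Z,v}(\check\lambda(\varpi_v))$ is a piecewise-linear function that is strictly positive on the closed cone $\mathcal V\cap\mathcal C(X)$ away from $0$, a compactness argument on the unit sphere gives a constant $c>0$ with $m(\check\lambda)\ge c\|\check\lambda\|$, whence $\langle\chi,\check\lambda\rangle\le \|\chi\|\cdot\|\check\lambda\|\le \tfrac{\|\chi\|}{c}\,m(\check\lambda)$. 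Taking $n\ge \|\chi\|/c$ yields $q_v^{\langle\chi,\check\lambda\rangle}\le q_v^{n\,m(\check\lambda)}=d_{Z,v}^{-n}$, which is exactly the claim. Once you fix this, your argument and the paper's coincide.
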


Here $\Ad^{S_0}$ denotes the adeles outside of $S_0$. Of course, the function is zero off $\prod_{v\notin S_0} X(\mathfrak o_v)$ so the extension of the distance function off integral points of $X$ plays no role in the statement.

\begin{proof}
First of all, we claim:
\begin{quote}
 The local distance function $d_{Z,v}$ on $X(\mathfrak o_v)$ is $G(\mathfrak o_v)$-invariant.
\end{quote}
 Indeed, $G(\mathfrak o_v)$ preserves the ideal of $Z$ in $\mathfrak o_v[X]$ and therefore its image in $\mathfrak o_v$ under any $\mathfrak o_v$-point.

Hence, since both $d_Z$ and $\prod_{v\notin S_0} \Phi_v^0$ are $\prod_{v\notin S_0} G(\mathfrak o_v)$-invariant, it suffices to prove the proposition for a set of representatives of $\prod_{v\notin S_0} G(\mathfrak o_v)$-orbits in the support of $\prod_{v\notin S_0} \Phi_v^0$, namely elements of $A_X(\Ad^{S_0})$ which at every place $v$ have image in $\check\Lambda_X^+\cap \mathcal C(X)$.

Let $Y$ denote the ``standard $\mathfrak o_{S_0}$-model'' of the affine toric embedding of $A_X$ corresponding to the cone $\check\Lambda_X^+\cap \mathcal C(X)$. By assumption (see Proposition \ref{places}), there is a morphism $Y\to X$. Therefore, if $Y_1$ denotes the complement of the open orbit on $Y$, the corresponding distance functions on $A_X(k_v)$, for every $v\notin S_0$, compare as: $d_{Z,v}\le d_{Y_1,v}$. On the other hand, clearly, for every $\chi\in \varchi(X)\otimes\RR$ there is a natural number $n$ such that for all $v\notin S_0$ we have $d_{Y_1,v}^{-n}\ge q_v^{\left<\chi,\check\lambda\right>}$ on $A_X(k_v)\cap Y(\mathfrak o_v)$. The claim follows.

\end{proof}


\section{Periods and the Rankin-Selberg method} \label{secrs}

\subsection{Pre-flag bundles} \label{sspreflag}

We are about to describe the geometric structure which gives rise to Rankin-Selberg integrals. We hasten to clarify, and it will probably be clear to the reader, that it is not a very natural structure from the general point of view that we have taken thus far, and its occurence should be seen as a coincidence. Indeed, the structure is not defined in terms of the original group $G$, but in terms of a possibly different group $\tilde G$, and relies on being able to decompose the variety by a sequence of maps with simple, easily identifiable fibers.

We keep assuming that $\mathcal Z(G)^0\xrightarrow{\sim} \mathcal Z(X)$. We will say that an affine spherical $G$-variety $X$ has the structure of a \emph{pre-flag bundle} if it is the affine closure of a $G$-stable subvariety $\tilde X^+$, which has the following structure:
\begin{enumerate}
 \item $\tilde X^+$ is homogeneous under a reductive group $\tilde G$;
 \item there is a diagram of homogeneous $\tilde G$-varieties with surjective morphisms:
\begin{equation*}
 \begin{CD}
  \tilde X^+ \\
  @VV{L\text{-torsor}}V \\
  \tilde Y \\
  @VV{\text{fiber over }y\in Y\text{ is a flag variety for }\tilde G_y}V \\
  Y&\text{ (}\simeq G'_y\backslash G' \simeq \tilde G_y\backslash \tilde G\text{ with }G'_y, \tilde G_y\text{ reductive).}
 \end{CD}
\end{equation*}
where:
\begin{itemize}
 \item $Y$ is an affine, $\tilde G$-homogeneous variety (called the \emph{base} of the pre-flag bundle);
 \item $\tilde Y$ is proper over $Y$ (hence the fiber over $y\in Y$ is a flag variety for $\tilde G_y$);
 \item $\tilde Y$ is the quotient of $\tilde X^+$ by the free, $\tilde G$-equivariant action of a reductive group $L$ which contains $\mathcal Z(X)$;
 \item $L$ is an almost direct factor of $G$.
\end{itemize}
\end{enumerate}

\begin{remark}The group $G'$ has been inserted in the diagram for later reference. It is supposed to belong to an almost direct decomposition $G = L\cdot G'$ and it necessarily acts transitively on $Y$, since $\mathcal Z(X)$ acts trivially on $Y$ while, on the other hand, it retracts all points onto a homogeneous subvariety by Proposition \ref{structureaffine}.
\end{remark}

Hence, the notion of a pre-flag bundle combines the notion of a homogeneous affine variety (which here is the base $Y$), with the notion of a \emph{pre-flag variety}, i.e.\ a quasi-affine quotient of $N''\backslash G''$ by a subgroup of $M''$, where $M''N''$ is the Levi decomposition of a parabolic of $G''$ (here, the fibers over $Y$ are such,\footnote{Notice that $L$ is necessarily a quotient of a Levi subgroup of $\tilde G_y$. Indeed, if we write as $\tilde X^+_y=\tilde H_y\backslash\tilde G_y\to \tilde P_y\backslash\tilde G_y$ the map between the fibers of $\tilde X^+$, resp.\ $\tilde X^+/L$ over $y\in Y$, where $\tilde P_y$ is a parabolic of $\tilde G_y$, then $L$ can be identified with a subgroup of $\Aut^{\tilde G_y}(X_y)$ preserving the fiber of this map, that is with a subgroup of $\mathcal N_{\tilde P_y}(\tilde H_y)/\tilde H_y$. Since it acts transitively on the fibers of this map, it follows that $\tilde H_y$ must be normal in $\tilde P_y$, and $L$ must be the quotient $\tilde P_y/\tilde H_y$. 
Since $L$ is reductive, this also implies that $\tilde H_y$ contains the unipotent radical of $\tilde P_y$. 
} setting $G''$=the stabilizer of a point on $Y$). Of course, each of these constituents can be trivial, for instance $Y$ can be a point (in which case we are dealing with a pre-flag variety, but possibly for a different group than $G$), or $X$ could be equal to $Y$ (in which case we are dealing with affine homogeneous varieties).

In this paper we will additionally impose the condition, without mentioning it further, that the fiber $\tilde X^+_y$ over $y\in Y$ is a product of varieties $[P_i,P_i]\backslash G_i$ or of the form $U_{P_i}\backslash G_i$, where $\prod_i G_i = \tilde G_y$. This extra condition will allow us to restrict our discussion to Eisenstein series induced either from cusp forms or from characters of parabolic subgroups, and to use the computations of \cite{BFGM}. Notice that the dual group of $L$ acts on the unipotent radical of the dual parabolic to $\tilde P_y$ inside of the dual group of $\tilde G_y$; indeed the quotient $\tilde P_y\twoheadrightarrow L$ gives rise to a homomorphism: $\check L\to \check{\tilde L}_y$, where $\check{\tilde L}_y$ is the standard Levi dual to $\tilde P_y$. We let $\mathfrak{\check u}_{\tilde P}$ denote\footnote{It would be more correct to consider only what will later be denoted by $\mathfrak{\check u}_{\tilde P}^f$ for those factors of $\tilde X^+_y$ that are of the form $[P_i,P_i]\
backslash G_i$, but that does not make any difference for the statement of Theorem \ref{preflagthm} below, since we are only using $\mathfrak{\check u}_{\tilde P}$ to require the meromorphic continuation of an $L$-function, and the difference if we took $\mathfrak{\check u}_{\tilde P}^f$ instead would just be some abelian $L$-function.} the Lie algebra of the unipotent radical of the parabolic dual to $\tilde P_y$, considered as a representation of $\check L$.


The requirement that $\tilde G$ commutes with the action of $\mathcal Z(X)$ (by the condition $\mathcal Z(X)\subset L$) is meant to allow us to the $\mathcal Z(X)$-Mellin transforms of $X$-theta series to usual Eisenstein series on $\tilde G_y$ induced from $\tilde P_y$.

\begin{example}\label{vsexample}
 The variety $\Mat_n$ for $\GL_n\times\GL_n$ ($n\ge 2$) is a pre-flag variety, and more generally so is any $N$-dimensional vector space (here $N=n^2$) with a linear $G$-action, as it is equal to the affine closure of $P_{N}\backslash \GL_{N}$ (with $P_N$ the mirabolic subgroup). Notice, however, that an $n+m$-dimensional vector space ($n,m\ge 2$) can be considered as a pre-flag variety for both $\tilde G=\GL_{n+m}$ and $\tilde G=\GL_n\times \GL_m$; which one we will choose will depend on which torus action we will consider (i.e.\ what is $\mathcal Z(X)$). For instance, for the second possibility, decomposing the given vector space as $X=V=V_n\oplus V_m$ we have:
\begin{enumerate}
 \item $Y$= a point;
 \item $\tilde X^+ = (V_n\smallsetminus\{0\}) \times (V_m\smallsetminus\{0\})$;
 \item $\tilde G = \GL(V_n)\times GL(V_m)$;
 \item $L=\mathcal Z(X) = \Gm\times \Gm$, the two copies acting on $V_n$ and $V_m$, respectively;
 \item and we can take $G=\tilde G$ (with $L$ identified as its center), or any subgroup thereof which contains the center and acts spherically.
\end{enumerate}
\end{example}

From our point of view, whether a spherical variety is a pre-flag bundle or not is a matter of ``chance'' and in fact should be irrelevant as far as properties of $X$-theta series and their applications go -- the fundamental object is just $X$ as a $G$-variety, and not its structure of a pre-flag bundle. We will try to provide support for this point of view in \S \ref{sstensor}. However, in absence of a general proof of Conjecture \ref{mainconjecture}, this is the only case where its validity, in the weaker form of Conjecture \ref{weakconjecture}, can be proven. Moreover, the concept of pre-flag bundles is enough to explain a good part of the Rankin-Selberg method.

We assume throughout in this section that the local Schwartz spaces $\mathcal S(X_v)$ are the $G$-spaces generated by the ``basic function'', which we extract from computations on Drinfeld spaces (outside of a finite number of places), and by functions in $C_c^\infty(X_v^+)$ obtained as convolutions of delta functions with smooth, compactly supported measures on $G_v$. (At non-archimedean places, such functions span $C_c^\infty(X_v)$.) The main result of this section is the following:

\begin{theorem}\label{preflagthm}
Assume that $X$ is a wavefront spherical variety with trivial arithmetic multiplicity which has the structure of a a pre-flag bundle, and let $\tau$ vary over a holomorphic family of cuspidal automorphic representations of $G$ (i.e.\ an irreducible cuspidal representation twisted by idele class characters of the group). Let $\tau_1$ denote the isomorphism class of the restriction of $\tau$ to $L$, and assume that for some finite set of places $S$ the partial $L$-function $L^{S}(\tau_1,{\mathfrak{\check u}_{\tilde P}}, 1)$  has meromorphic continuation everywhere (as $\tau$ varies in this family).

Then Conjecture \ref{weakconjecture} holds for $\phi\in \tau$ and $\mathcal S(X_v)$ as described above.
\end{theorem}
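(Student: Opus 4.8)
The plan is to use the pre-flag bundle structure to unfold the $X$-theta series against a cusp form, reducing the $X$-Eisenstein series to a classical Eisenstein series on the group $\tilde G_y$ induced from the parabolic $\tilde P_y$, and then to invoke the known meromorphic continuation of such Eisenstein series together with the hypothesis on $L^S(\tau_1,\mathfrak{\check u}_{\tilde P},1)$. First I would set up the geometry: write $\tilde X^+ \to \tilde X^+/L \to Y$ as in the diagram, with $Y\simeq \tilde G_y\backslash\tilde G$ and the fiber of the middle map a product of varieties $[P_i,P_i]\backslash G_i$ or $U_{P_i}\backslash G_i$. The point is that summation over $X^+(k)$ can be organized according to this fibration: since $Y$ is affine and $\tilde G$-homogeneous with reductive stabilizer, the period of a cusp form $\phi$ over $[\tilde G]$ against the theta series can first be folded along $Y$, replacing the sum over $Y(k)$ by an integral over $\tilde G_y(\Ad)\backslash\tilde G(\Ad)$ applied to a function on the fiber. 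Along the $L$-torsor and the flag-variety directions, the remaining summation over the $k$-points of $[P_i,P_i]\backslash G_i$ (resp.\ $U_{P_i}\backslash G_i$) is precisely the summation defining a Drinfeld-type Eisenstein series, and the basic function was chosen (via the computations of \cite{BFGM} on $\widetilde{\Bun_P}$, resp.\ $\overline{\Bun_P}$) so that the resulting section of the induced representation $I_{\tilde P_y}^{\tilde G_y}(\cdot)$ is the ``normalized'' one, whose Eisenstein series differs from a standard one by the $L$-factor attached to $\mathfrak{\check u}_{\tilde P}$.

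The key steps, in order, are: (1) realize $\theta(\Phi,g)$ restricted to $\tilde G(\Ad)$, after the $Y$-unfolding, as an Eisenstein series $E^{\tilde G_y}_{\tilde P_y}(\cdot)$ built from the basic function on the fiber; (2) identify, using Assumption \ref{mainassumption}(2), the basic function on the fiber with the product of the Braverman--Gaitsgory/Drinfeld basic functions on the factors $[P_i,P_i]\backslash G_i$ or $U_{P_i}\backslash G_i$, so that the computations of \cite{BFGM} apply verbatim; (3) conclude from \cite{BFGM} that the section so produced is, up to the $L$-factor $L^S(\tau_1,\mathfrak{\check u}_{\tilde P},s)$ evaluated at the relevant point (which by the normalization of $X$-positive characters and the Mellin transform in \ref{sssMellin} is $s=1$), a ``good'' holomorphic section of the family of induced representations; (4) write the integral \eqref{intconj1} as the standard inner product of $\phi$ with this Eisenstein series over $[\tilde G_y]$ (the part of $G$ not accounted for by $L$ acts through $\tilde G_y$, and $\mathcal Z(X)=\mathcal Z(G)^0$ contributes the Mellin variable $\omega$); (5) invoke the meromorphic continuation of Langlands--Eisenstein series, multiply back by $L^S(\tau_1,\mathfrak{\check u}_{\tilde P},1)^{-1}$, which by hypothesis is meromorphic, and absorb the finitely many bad places (where $\mathcal S(X_v)$ is replaced by $C_c^\infty(X_v^+)$, as allowed by Remark \ref{afterconjecture}(2)) into meromorphic scalar factors. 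The wavefront hypothesis enters, via the Remark following Conjecture \ref{weakconjecture} and Proposition \ref{parind}, to guarantee that the theta series summed over $X^+(k)$ agrees with the one summed over the largest faithful subvariety, so that the non-open $G$-orbits on $X$ -- whose stabilizers contain unipotent radicals of proper parabolics and hence contribute zero against a cusp form -- may be discarded cleanly; trivial arithmetic multiplicity guarantees the generalized Cartan decomposition of Theorem \ref{stratification} is a genuine bijection at almost all places, so the basic function is unambiguously defined.

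The main obstacle I expect is step (3): matching the basic function coming from the Gaitsgory--Nadler (equivalently Drinfeld) models to the \emph{precise} normalized section whose Eisenstein series has the clean functional equation computed in \cite{BFGM}. This requires Assumption \ref{mainassumption} in full -- independence of the basic function from the choices of objects $x_{\check\lambda}$, from which geometric model ($\widetilde{\Bun_P}$ vs.\ $\overline{\Bun_P}$ vs.\ the general $\mathcal Z$) one uses, and compatibility with passage to fibers -- none of which is proven here. Granting that assumption, the identification of the resulting function on $\Lambda_X^+$ with the Fourier coefficients of the IC sheaf on $\overline{\Bun_P}$ is exactly the content of \cite{BFGM}, and the rest is the standard theory of Eisenstein series; but without it, one only knows the \emph{support} of $\Phi^0$ and its value at $0$, which is not enough to pin down the Eisenstein section. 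A secondary technical point is the interchange of the sum over $X^+(k)$ with the $[\tilde G]$-integral and with the Mellin transform over $\mathcal Z(X)(\Ad)$, which is justified in the domain of $X$-positivity by the absolute convergence and moderate-growth estimates of Proposition \ref{moderategrowth}, and then propagated by meromorphic continuation; I would handle this by first restricting to sufficiently $X$-positive $\omega$ where everything converges, performing all manipulations there, and only then continuing.
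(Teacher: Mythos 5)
Your proposal follows essentially the same route as the paper: pass from $\theta$ (sum over $X^+(k)$) to $\tilde\theta$ (sum over $\tilde X^+(k)$) via Proposition~\ref{parind} and the wavefront hypothesis (Proposition~\ref{equal}), unfold along the reductive base $Y$ using Assumption~\ref{mainassumption} to factor $\Phi$ through a Schwartz function on a fiber, recognize the inner $L$-integral as a BFGM-normalized Eisenstein series (Proposition~\ref{prES} via Theorem~\ref{BFGMfunctions}), and invoke Langlands' meromorphic continuation together with the hypothesis on $L^S(\tau_1,\mathfrak{\check u}_{\tilde P},1)$. Your caveat about the reliance on Assumption~\ref{mainassumption} matches the paper's own acknowledgment; the only looseness is writing the final pairing over $[\tilde G_y]$ rather than $[G_y']$, which does not affect the argument's substance.
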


We prove this theorem in \S \ref{ssRS} by appealing to the meromorphic continuation of usual Eisenstein series, after explicitly describing the basic vectors according to the computations of intersection cohomology sheaves on Drinfeld spaces in \cite{BFGM}. However, the application of the meromorphic continuation of Eisenstein series is not completely trivial as in some cases we have to use the theory of spherical varieties to show that as we ``unfold'' this integral certain summands vanish (in the language often used in the theory of Rankin-Selberg integrals: certain $G$-orbits on $X$ are ``negligible''). We start by demonstrating an extreme case, which gives rise to period integrals.

\subsection{Period integrals}\label{ssperiods}

First consider the extreme case of a pre-flag bundle with trivial fibers: Namely, choosing a point $x_0\in X(k)$, we have $X=H\backslash G$ with $H=G_{x_0}$ \emph{reductive}. Then at each place $v\notin S_0$ the basic function is the characteristic function of $X(\mathfrak o_v)$, and we may assume that $\mathcal S(X(\Ad)) = C_c^\infty(X(\Ad))$. The multiplicity-freeness assumption of \S \ref{ssconjectures} implies, in particular, that at almost every place $G(\mathfrak o_v)$ acts transitively on $X(\mathfrak o_v)$. Then we can take $\Phi \in \mathcal S(X(\Ad))$ of the form $\Phi=h\star \delta_{x_0}$ where $h\in \mathcal H(G(\Ad))$, the Hecke algebra of compactly supported smooth measures on $G(\Ad)$, and $\delta_{x_0}$ is the delta function at $x_0$ (considered as a generalized function).

Then, if $\check h$ denotes the adjoint to $h$ element of $\mathcal H(G(\Ad))$, the integral:
$$\int_{G(k)\backslash G(\Ad)} \phi\cdot\omega(g) \theta(\Phi,g) dg$$
 of Conjecture \ref{weakconjecture} is equal to:
\begin{equation}\int_{H(k)\backslash H(\Ad)} (\check h\star \phi)\cdot  \omega(g) dg.\end{equation}
This is called a \emph{period integral}, and such integrals have been studied extensively. Hence \emph{period integrals are the special case of the pairing of Conjecture \ref{weakconjecture} which is obtained from pre-flag bundles with trivial fibers} (i.e.\ affine homogeneous spherical varieties).

For example, when $X=\GL_2$, $G=\Gm\times\GL_2$, with $\Gm$ acting as a non-central torus of $\GL_2$ by multiplication on the left, we get the period integral of Hecke (\ref{hecke}), discussed in the introduction.
All spherical period integrals are included in the lists of Knop and van Steirteghem \cite{KnVS} which we will discuss in the next section.

\subsection{Connection to usual Eisenstein series}

\subsubsection{Certain stacks and sheaves related to flag varieties} The goal of this subsection is to explicate the basic functions $\Phi_v^0$ for pre-flag bundles, based on the computations of \cite{BFGM}. We work with the varieties $X=\overline{[P,P]\backslash G}^\aff$ or $X=\overline{U_P\backslash G}^\aff$ and use the notation of \S \ref{Dmodels}. We do not aim to give complete definitions of the constructions of \emph{loc.\ cit.}, but to provide a guide for the reader who would like to extract from it the parts most relevant to our present discussion. The final result will be the following formula for the basic function $\Phi^0$ (locally at a non-archimedean place, which we suppress from the notation):

\begin{theorem} \label{BFGMfunctions}
For $X=\overline{H\backslash G}$ in each of the following cases, we have:
\begin{itemize}
 \item If $H=U_P$:  $\Phi^0 = \sum_{i\ge 0} q^{-i}\widecheck\Sat_M\left(\operatorname{Sym}^i(\check{\mathfrak u}_P)\right)\star 1_{HK} = $ \begin{equation}=\widecheck\Sat_M\left(\frac{1}{\wedge^{\operatorname{top}} (1-q^{-1}\check{\mathfrak u}_P)}\right) \star 1_{HK}.\end{equation}
 \item If $H=[P,P]$: $\Phi^0 = \sum_{i\ge 0} q^{-i}\widecheck\Sat_{M^\ab}\left(\operatorname{Sym}^i(\check{\mathfrak u}_P^f)\right)\star 1_{HK} =$ \begin{equation}= \widecheck\Sat_{M^\ab}\left(\frac{1}{\wedge^{\operatorname{top}}(1-q^{-1}\check{\mathfrak u}_P^f)}\right) \star 1_{HK}.\end{equation}
\end{itemize}
\end{theorem}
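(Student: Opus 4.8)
The strategy is to compute the trace-of-Frobenius function attached to the intersection cohomology sheaf $IC^0$ of the closure $\overline{\mathcal Z^0}$ inside the relevant Drinfeld compactification ($\widetilde{\Bun_P}$ in the case $H = U_P$, $\overline{\Bun_P}$ in the case $H = [P,P]$), and then transport the answer through the generalized Cartan decomposition of Theorem \ref{stratification}. The first step is to identify the $G(\mathfrak o)$-orbits on $X^+(F) \cap X(\mathfrak o)$ with $\Lambda_X^+ \cap \mathcal C(X)$; for $X = \overline{U_P\backslash G}^\aff$ one has $A_X = A/[M,M]$ (so that $\Lambda_X$ is the coweight lattice of $M^\ab$ after the appropriate dualization) and $\mathcal C(X)$ is the antidominant cone of $\check M$, matching up the relevant $\check\lambda$ with dominant coweights of $\check M$; similarly for $[P,P]$ one gets $A_X = A/[M,M]\cdot\mathcal Z(M)^0$-type data. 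This is the bookkeeping that makes the statement ``a function on $\Lambda_X^+$'' into ``a function on dominant coweights of $\check M$ (resp.\ $\check M^\ab$)''.

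The heart of the matter is the local IC computation, which is exactly the content of \cite{BFGM}: the stalks of the IC sheaf of $\overline{\Bun_P}$ (and of $\widetilde{\Bun_P}$) along the defect strata are given, in terms of the function-sheaf dictionary, by symmetric powers of $\check{\mathfrak u}_P$ (resp.\ $\check{\mathfrak u}_P^f$, the part of $\check{\mathfrak u}_P$ on which the center acts trivially), each shifted by the appropriate Tate twist $q^{-i}$ recording cohomological degree. Concretely, the factorization structure of these compactifications reduces the stalk at a point of ``total defect $\check\lambda$'' to a product over the curve of local contributions, and Theorem 1.12 / \S 4 of \cite{BFGM} evaluates each local contribution as $\bigoplus_i \operatorname{Sym}^i$ of the nilpotent radical representation. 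The geometric Satake functor $\widecheck\Sat_M$ is what converts a representation of $\check M$ into an element of the unramified Hecke algebra $\mathcal H(M(F), M(\mathfrak o))$, and then the map $\widetilde{\Bun_P} \to \Bun_M$ (pushforward along the ``drop the section, remember the $M$-bundle'' map, which on the function side is convolution with $1_{HK}$, the characteristic function of the open $G(\mathfrak o)$-orbit through $H\cdot 1$) produces a function on $X^+(F)$. Summing the geometric series $\sum_{i\ge 0} q^{-i}\operatorname{Sym}^i(V) = \left(\wedge^{\operatorname{top}}(1 - q^{-1}V)\right)^{-1}$ (an identity in the representation ring completed appropriately, valid because it is the character of the total symmetric algebra) gives the closed form in the displayed equations.

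The main obstacle is not any single computation but the \emph{matching} of three different normalizations: (i) the normalization of $IC^0$ and of the stratification of $\mathcal Z$ used in \S\ref{sssbasicfunction}, which is defined via the Gaitsgory--Nadler meromorphic quasimap space, versus (ii) the normalization in \cite{BFGM}, which works with the genuinely global $\overline{\Bun_P}$ where the section is everywhere defined, versus (iii) the identification of the resulting function on $\Lambda_X^+$ with the displayed convolution, where one must check that the geometric Satake normalization and the half-sum-of-roots shift $\rho_{\tilde P}$ implicit in $1_{HK}$ and in $|\det|$-type factors are absorbed correctly so that $\Phi^0(0) = 1$ (which fixes the overall constant $c$ in \eqref{basicfunction}). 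In particular one must use Assumption \ref{mainassumption}(1) (independence of the model, $\widetilde{\Bun_P}$ versus $\overline{\Bun_P}$ versus $\mathcal Z$) to know that the Drinfeld-space computation computes the same basic function as the intrinsic definition, and Assumption \ref{mainassumption}(2) to reduce the $[P,P]$ case to the $U_P$ case fibered over the base torus $\mathcal Z(M)/\mathcal Z(G)$, extracting the ``$f$'' (finite-part) decoration on $\check{\mathfrak u}_P$. Once these normalizations are pinned down, the geometric-series identity and the definition of $\widecheck\Sat$ give the result immediately; I would organize the write-up so that the normalization lemma is stated and proved first, and the two bulleted formulas follow as a one-line specialization of the \cite{BFGM} stalk computation.
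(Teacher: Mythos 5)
Your overall approach is the paper's: quote the stalk computations of \cite{BFGM} (Theorems 1.12 and 7.3, combined here as Theorem \ref{BFGMtheorem}), use the function--sheaf dictionary together with the fact that the Hecke action on sheaves matches convolution by $\widecheck\Sat_M(V)$ (resp.\ $\widecheck\Sat_{M^\ab}(V)$) on functions --- a lemma extracted from \cite{BG} --- and then sum the geometric series $\sum_i q^{-i}\operatorname{Sym}^i V = \left(\wedge^{\operatorname{top}}(1-q^{-1}V)\right)^{-1}$ to get the closed form, with the $\rho_P$-shift from the stratum codimensions absorbed into the ``unitary'' normalization of the $M$-action and the $e^{-\rho_M}$-twisted Satake for $M^\ab$.

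One step in your plan, however, is off. You propose to derive the $H=[P,P]$ case from the $H=U_P$ case via Assumption \ref{mainassumption}(2), viewing $\overline{[P,P]\backslash G}^\aff$ as fibered over something with the torus $\mathcal Z(M)/\mathcal Z(G)$ appearing, so that the ``$f$'' decoration would drop out. This does not work: the natural morphism goes from $\overline{U_P\backslash G}^\aff$ to $\overline{[P,P]\backslash G}^\aff$ (since $U_P\subset [P,P]$), the target is not homogeneous, and in any case Assumption \ref{mainassumption}(2) constrains the basic function of the \emph{source} by that of the fiber, not the reverse. More to the point, $\check{\mathfrak u}_P^f$ (the $f$-fixed subspace under the principal $\mathfrak{sl}_2$ in $\check{\mathfrak m}$) is not obtained from $\check{\mathfrak u}_P$ by any restriction-to-a-subtorus operation; it is the genuinely different content of \cite[Theorem 7.3]{BFGM}, which computes $IC_{\overline{\Bun}_P}$ directly and independently of the $\widetilde{\Bun}_P$ computation. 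The paper simply cites both BFGM theorems side by side. You should replace the Assumption-2 reduction with a direct appeal to Theorem 7.3 (and the accompanying $\overline{\operatorname{Loc}}$ functor, which encodes the $h$-grading), which is what actually produces the $\check{\mathfrak u}_P^f$ in the second bullet. The rest of your normalization bookkeeping --- independence of the Drinfeld model vs.\ $\mathcal Z$ via Assumption \ref{mainassumption}(1), fixing $c$ by $\Phi^0(0)=1$ --- is correct and matches the paper.
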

Here $\widecheck\Sat$ denotes the power series in the Hecke algebra associated by the Satake isomorphism to the given power series in the representation ring of the dual group -- it will be explained in detail in \S \ref{sssEisfunctions}. 

We denote by $\Lambda_{G,P}$ the lattice of cocharacters of the torus $M/[M,M]$ and by $\Lambda_{G,P}^\pos$ the sub-semigroup spanned by the images of $\check\Delta\smallsetminus\check\Delta_M$. 
For every $\theta\in \Lambda_{G,P}^\pos$ we have a canonical locally closed embedding: $j_\theta: C\times \Bun_P\to \overline{\Bun}_P$ \cite[Proposition 1.5]{BFGM}. The image will be denoted by ${_{(\theta)}\overline{Bun_P}}$. (Notice: This is not the same as what is denoted in \emph{loc.cit.} by ${_{\theta}\overline{Bun_P}}$, but rather what is denoted by ${_{\mathfrak U(\theta)}\overline{Bun_P}}$, when $\mathfrak U(\theta)$ is the trivial partition of $\theta$.) Its preimage in $\widetilde{\Bun}_P$ will be denoted by $_{(\theta)}\widetilde{\Bun}_P$. We have a canonical isomorphism: $_{(\theta)}\widetilde{\Bun}_P\simeq \Bun_P\times_{\Bun_M}\mathcal H_M^{(\theta)}$, where $\mathcal H_M^{(\theta)}$ is a stack which will be described below.

\begin{remarks}\begin{enumerate}[(i)]\item
If $X=\overline{[P,P]\backslash G}^\aff$ under the $M^\ab=M/[M,M]\times G$-action, then $\Lambda_X^+$ can be identified with $\Lambda_{G,P}$, and $_{(\theta)}\overline{Bun}_P$ is precisely the analog of what we denoted by $\mathcal Z^{w_0\theta}$ on the Gaitsgory-Nadler stacks, where $w_0$ is the longest element in the Weyl group of $G$. The reason that only $\theta\in \Lambda_{G,P}^\pos$ appear is that, as we remarked in \S\ref{Dmodels}, the quasi-maps on Drinfeld spaces are, by definition, not allowed to have poles. For the reader who would like to trace this back to the combinatorics of quasi-affine varieties and their affine closures of \S\ref{ssaffine} we mention that the cone spanned by $\rho(\mathcal D)$ is the cone spanned by the images of $\check\Delta\smallsetminus\check\Delta_M$. 

\item If $X=\overline{U_P\backslash G}^\aff$ under the $M\times G$-action then $\Lambda_X^+\simeq\{{\check\lambda}\in \Lambda_A| \left<{\check\lambda},\alpha\right>\le 0 \text{ for all }\alpha\in\Delta_M\}$ (where we denote by $A$ the maximal torus of $G$ and by $\Lambda_A$ its cocharacter lattice). There is a map: $\Lambda_X\to\Lambda_{G,P}$, and $_{(\theta)}\widetilde{Bun}_P$ corresponds to the union of the strata $\mathcal Z^{w_0{\check\lambda}}$ of Gaitsgory-Nadler, with ${\check\lambda}$ ranging over all the $M$-dominant preimages of $\theta$.\end{enumerate}                                                                                                                                                                                                                                                                                                                                                                                                                                                                       
                                              \end{remarks}

We have the geometric Satake isomorphism, i.e.\ a functor $\operatorname{Loc}:\Rep(\check G)\to \operatorname{Perv}(\mathcal G_G)$ such that the irreducible representation of $\check G$ with highest weight ${\check\lambda}$ goes to the intersection cohomology sheaf of a $G(\mathfrak o)$-equivariant closed, finite-dimensional subscheme $\overline{\mathcal G_G}^{\check\lambda}$. We will make use of this functor for $M$, rather than $G$. If $V$ is a representation of $\check M$ -- assumed ``positive''; this has to do with the fact that we don't allow poles, but there's no need to explain it here -- and $\theta\in \Lambda_{G,P}^\pos$ then we define $\operatorname{Loc}^{(\theta)}(V)$ to be $\operatorname{Loc}(V_\theta)$, where $V_\theta$ is the $\theta$-isotypic component of $V$. (We ignore a twist by $\overline{\QQ_l}[1]\left(\frac{1}{2}\right)^{-1}$ introduced in \cite{BFGM}, and modify the results accordingly.) 

We now introduce relative, global versions of the above spaces. We denote by $\mathcal H_M$ the \emph{Hecke stack} of $M$. It is related to $\mathcal G_M$ as follows: If we fix a curve $C$ and a point $x\in \mathcal C$ then, by definition, $\mathcal G_M$ is the functor Schemes$\to$Sets which associates to every scheme $S$ the set of pairs $(\mathcal F_G,\beta)$ where $\mathcal F_M$ is a principal $M$-bundle over $C\times S$ and $\beta$ is an isomorphism of it outside of $(C\smallsetminus\{x\})\times S$ with the trivial $M$-bundle. The relative version of this, as we allow the point $x$ to move over the curve, is denoted by $\mathcal G_{M,C}$, and the relative version of the latter, as we replace the trivial $M$-bundle with an arbitrary $M$-bundle, is $\mathcal H_M$. It is fibered over $C\times\Bun_M$.

In \emph{loc.cit.}, p.\ 389, certain closed, finite-dimensional subschemes $\mathcal G_M^{+,\theta}$ of $\mathcal G_M$ are defined for every $\theta\in \Lambda_{G,P}^\pos$ which at the level of reduced schemes are isomorphic to $\overline{\mathcal G_M}^{\flat(\theta)}$, where $\flat(\theta)$ is an $M$-dominant coweight associated to $\theta$ -- the ``least dominant'' coweight mapping to $\theta$. The relative versions of those give rise to substacks $\mathcal H_M^{(\theta)}$ of $\mathcal H_M$. 

For these relative versions we have: Functors $\operatorname{Loc}_{\Bun_M,C}$ (resp.\ $\operatorname{Loc}_{\Bun_M,C}^{(\theta)}$) from $\Rep(\check M)$ to perverse sheaves on $\mathcal H_M$ (resp.\ $\mathcal H_M^{(\theta)}$) and $\operatorname{Loc}_{\Bun_P,C}$ (resp.\ $\operatorname{Loc}_{\Bun_P,C}^{(\theta)}$) to perverse sheaves on $\Bun_P\times_{\Bun_M}\mathcal H_M$ (resp.\ $\Bun_P\times_{\Bun_M}\mathcal H_M^{(\theta)}$), the latter being $IC_{\Bun P}$ along the base $\Bun_P$. 

Then the main theorem of \cite{BFGM} (Theorem 1.12) is a description of the $*$-restriction of $IC_{\widetilde{\Bun}_P}$ to $_{(\theta)}\widetilde{\Bun}_P\simeq \Bun_P\times_{\Bun_M}\mathcal H_M^{(\theta)}$. Moreover, Theorem 7.3 does the same thing for $IC_{\overline{\Bun}_P}$ and $_{(\theta)}\overline{\Bun}_P\simeq C\times \Bun_P$. The normalization of $IC$ sheaves is that they are pure of weight 0; i.e.\ for a smooth variety $Y$ of dimension $n$ we have $IC_Y\simeq \left(\overline{\QQ_l}\left(\frac{1}{2}\right)[1]\right)^{\otimes n}$, where $\overline{\QQ_l}\left(\frac{1}{2}\right)$ is a fixed square root of $q$.

\begin{theorem}[\cite{BFGM}, Theorems 1.12 and 7.3]\label{BFGMtheorem}
 The $*$-restriction of $IC_{\widetilde{\Bun}_P}$ to $_{(\theta)}\widetilde{\Bun}_P\simeq \Bun_P\times_{\Bun_M}\mathcal H_M^{(\theta)}$ is equal to:
\begin{equation}\label{tildebun}
 \operatorname{Loc}_{\Bun_P,C}^{(\theta)} \left(\oplus_{i\ge 0} \operatorname{Sym}^i(\check{\mathfrak u}_P)\otimes \overline{\QQ_l}(i)[2i]\right).
\end{equation}
The *-restriction of $IC_{\overline{\Bun}_P}$ to $_{(\theta)}\overline{\Bun}_P\simeq C\times \Bun_P$ is equal to:
\begin{equation}\label{linebun}
 IC_{_{(\theta)}\overline{\Bun}_P} \otimes \overline{\operatorname{Loc}}\left(\oplus_{i\ge 0} \operatorname{Sym}^i(\check{\mathfrak u}_P^f)_\theta\otimes \overline{\QQ_l}(i)[2i]\right).
\end{equation}
\end{theorem}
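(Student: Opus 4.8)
This is a restatement of \cite[Theorems 1.12 and 7.3]{BFGM}, so the only sensible ``proof'' is to reproduce their strategy, which I sketch here. The backbone of the argument is the \emph{factorization} structure carried by $\widetilde{\Bun}_P$ and $\overline{\Bun}_P$ and by their intersection cohomology sheaves. The strata ${}_{(\theta)}\widetilde{\Bun}_P$, ${}_{(\theta)}\overline{\Bun}_P$ are indexed by $\theta\in\Lambda_{G,P}^{\pos}$, which one should picture as effective ``$(\check\Delta\smallsetminus\check\Delta_M)$-colored'' divisors on $C$; over the open locus of the relevant symmetric power $C^{(\theta)}$ where the support points are pairwise distinct, both the space and its $IC$ sheaf become, \'etale-locally, an external product of pieces attached to the individual points, each piece controlled by a single element of the monoid $\Lambda_{G,P}^{\pos}$, hence ultimately by a simple coroot in $\check\Delta\smallsetminus\check\Delta_M$. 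Thus the whole computation reduces, first, to the behaviour at a single point, that is, to the \emph{Zastava spaces}: finite-dimensional affine models which describe the local structure of $\overline{\Bun}_P$ (resp.\ $\widetilde{\Bun}_P$) near a point of ${}_{(\theta)}\overline{\Bun}_P$ (resp.\ ${}_{(\theta)}\widetilde{\Bun}_P$) up to a smooth factor, and which come equipped with a map to $C^{(\theta)}$.

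The first real step is to analyse these Zastava spaces. Using the contracting $\Gm$-action coming from the center of $M$, one shows that the Zastava space retracts onto the fiber of its map to $C^{(\theta)}$ over the ``most special'' point (with multiplicities prescribed by $\theta$), and that this fiber can be identified with an intersection of a semi-infinite orbit with the closed subscheme $\overline{\mathcal G_M}^{\flat(\theta)}$ of the affine Grassmannian of $M$ (in the $\widetilde{\Bun}_P$ case), or with a ``$U_P$-version'' of such an intersection (in the $\overline{\Bun}_P$ case). Here the geometric Satake equivalence for $M$ enters: the cohomology of such intersections, organized over all $\theta$, computes weight spaces of $\check M$-representations, and the functor $\operatorname{Loc}$ is precisely the device that packages this cohomology into the perverse sheaves $\operatorname{Loc}_{\Bun_P,C}^{(\theta)}(V)$ on $\Bun_P\times_{\Bun_M}\mathcal H_M^{(\theta)}$. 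A direct computation -- reducible, by the factorization above, to the rank-one case where $M$ has semisimple rank one in the relevant direction and everything happens in the affine Grassmannian of $\SL_2$ or $\PGL_2$ -- then identifies the resulting $\check M$-module with $\bigoplus_{i\ge 0}\operatorname{Sym}^i(\check{\mathfrak u}_P)\otimes\overline{\QQ_l}(i)[2i]$, and with $\bigoplus_{i\ge 0}\operatorname{Sym}^i(\check{\mathfrak u}_P^f)_\theta\otimes\overline{\QQ_l}(i)[2i]$ in the $\overline{\Bun}_P$ case, the extra tensor factor $IC_{{}_{(\theta)}\overline{\Bun}_P}$ being the (constant) $IC$ sheaf of the stratum $C\times\Bun_P$ itself. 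The shift and Tate twist $(i)[2i]$ merely record the cohomological degree in which each symmetric power sits.

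Finally one reassembles. Because $\operatorname{Sym}^\bullet$ is multiplicative -- it carries a direct sum of graded vector spaces to a tensor product -- the external-product structure over $C^{(\theta)}$ forces the local rank-one answers to glue into the asserted global formulas \eqref{tildebun} and \eqref{linebun}, and the same $\Gm$-contraction argument shows that no further terms contribute. The main obstacle is, unsurprisingly, the Zastava-space input itself: establishing that $\overline{\Bun}_P$ and $\widetilde{\Bun}_P$ genuinely admit these finite-dimensional local models, and computing the $IC$ stalks of the Zastava spaces in terms of $\operatorname{Sym}(\check{\mathfrak u}_P)$. This is where the real geometry of \cite{BFGM} resides; everything around it is a formal consequence of factorization together with geometric Satake.
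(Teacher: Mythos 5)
The paper offers no independent proof: Theorem \ref{BFGMtheorem} is a verbatim citation of \cite[Theorems 1.12 and 7.3]{BFGM}, so there is no internal argument for your sketch to agree or disagree with. Your outline --- factorization along $C^{(\theta)}$, reduction to Zastava-space local models, $\Gm$-contraction onto central fibers in the affine Grassmannian of $M$, geometric Satake to organize the resulting weight spaces, and reassembly via the multiplicativity of $\operatorname{Sym}^\bullet$ --- faithfully reproduces the structure of the BFGM argument and correctly locates the substantive work in the Zastava-space $IC$ computation.
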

Here $\check{\mathfrak u}_P$ denotes the adjoint representation of $\check M$ on the unipotent radical of the parabolic dual to $P$. Moreover, $\check{\mathfrak u}_P^f$ denotes the subspace which is fixed under the nilpotent endomorphism $f$ of a principal $\mathfrak{sl}_2$-triple $(h,e,f)$ in the Lie algebra of $\check M$. For the definition of $\overline{\operatorname{Loc}}(V)$, which takes into account the grading on $V$ arising from the $h$-action, cf.\ \emph{loc.cit.}, \S7.1. 

\subsubsection{The corresponding functions}\label{sssEisfunctions}

Let us fix certain normalized Satake isomorphisms. As before, our local, non-archimedean field is denoted by $F$, its ring of integers by $\mathfrak o_F$, and our groups are assumed to have reductive models over $\mathfrak o_F$. As usual, we normalize the action of $M(F)$ (resp.\ $M^\ab(F)$) on functions on $(H\backslash G)(F)$ where $H=U_P$ (resp.\ $[P,P]$) so that it is unitary on $L^2((H\backslash G)(F))$: 
\begin{equation}\label{Maction}
 m\cdot f(H(F)g) = \delta_P^{\frac{1}{2}}(m) f(H(F)m^{-1}g),
\end{equation} 
where $\delta_P$ is the modular character of $P$. We let $M_0=M(\mathfrak o_F)$, and normalize the (classical) Satake isomorphism as follows:
\begin{itemize}
 \item For the Hecke algebra $\mathcal H(M,M_0)$ in the usual way: $$\Sat_M:\CC[\check M]^{\check M}\simeq \CC[\Rep\check M]\xrightarrow{\sim} \mathcal H(M,M_0)$$
where $ \CC[\Rep\check M]$ is the Grothendieck algebra over $\CC$ of the category of algebraic representations of $\check M$.
 \item For the Hecke algebra $\mathcal H(M^\ab,M^\ab_0)$ we shift the usual Satake isomorphism: $\mathcal H(M^\ab,M^\ab_0)\simeq \CC[\mathcal Z(\check M)]\simeq \CC[\Rep \mathcal Z(\check M)]$ by $e^{-\rho_M}$, where $\rho_M$ denotes half the sum of positive roots of $M$. In other words, if $h$ is a compactly supported measure on $M(F)/M_0$, considered (canonically) as a linear combination of cocharacters of $M^\ab$ and hence as a regular function $f$ on the center $\mathcal Z(\check M)$ of its dual group, then we will assign to $h$ the function $z\mapsto f(e^{\rho_M}z)$ on the subvariety $e^{-\rho_M}\mathcal Z(\check M)$ of $\check G$:
$$\Sat_{M^\ab}:\CC[e^{-\rho_M}\mathcal Z(\check M)]\xrightarrow{\sim} \mathcal H(M^\ab,M^\ab_0).$$
\end{itemize}

Let $1_{HK}$ denote the characteristic function of $H\backslash HK$ (where $K=G(\mathfrak o_F)$), and consider the action map: $\mathcal H(M,M_0)\to C_c^\infty((U_P\backslash G)(F))^{M_0\times K}$, respectively $\mathcal H(M^\ab,M^\ab_0)\to C_c^\infty(([P,P]\backslash G)(F))^K$ given by $h\mapsto h \star 1_{HK}$. The map is bijective, and identifies the module $C_c^\infty((H\backslash G)(F))^{M_0\times K}$ with $\CC[\check M]^{\check M}$, resp.\ $\CC[e^{-\rho_M}Z(\check M)]$. Our normalization of the Satake isomorphism is such that this is compatible with the Satake isomorphism for $G$,  $\Sat_G:\mathcal H(G,K)=\CC[\check G]^{\check G}=\CC[\Rep(\check G)]$, in the sense that for $f\in \CC[\check G]^{\check G}$ we have: $$\Sat_G(f)\star 1_{HK} = \widecheck\Sat_{M\text{ or }M^\ab}(f) \star 1_{HK}.$$

Here and later, by the symbol $\check h$ we will be denoting the adjoint of the element $h$ in a Hecke algebra. Its appearance is due to the the definition (\ref{Maction}) of the action of $M$ as a right action on the space and a left action on functions. We extend the ``Sat'' notation to the fraction field of $\CC[\Rep\check M]$ (and, respectively, of $\CC[e^{-\rho_M}\mathcal Z(\check M)]$), where $\Sat_{M \text{ or } M^\ab}(R)$ (with $R$ in the fraction field) is thought of as a power series in the Hecke algebra.

Returning to the Drinfeld spaces discussed in the previous subsection, let $\operatorname{Ff}(E)(x):=\sum_i (-1)^i \tr(\Fr, H^i(E_x))$ denote the alternating sum of the trace of Frobenius acting on the homology of the stalks of a perverse sheaf ($\operatorname{Ff}$ stands for ``faisceaux-fonctions''). As in \S \ref{sssbasicfunction}, we fix an object $x_0$ on the basic stratum, a point $c_0\in C$ (recall that in the definition of Drinfeld's spaces, quasimaps do not have distinguished points) and we evaluate $\operatorname{Ff}(E)$, where $E=IC_{\widetilde{\Bun}_P}$ or $IC_{\overline{\Bun}_P}$, only at objects  $x_{\check\lambda}$ which are obtained by $M\times G$-Hecke modifications at $c_0$. This way, and using the Iwasawa decomposition, we obtain our basic function $\Phi^0$, which is an $M_0\times K$-invariant function on $(H\backslash G)(F)$. Recall that it is by definition normalized such that $\Phi^0(H\backslash H1)=1$. 

The study of the Hecke corresponences in \cite{BG} implies that $$\operatorname{Ff}(\operatorname{Loc}_{\Bun_P, C} (V)) = {\widecheck\Sat}_M(V) \star \operatorname{Ff}(\operatorname{Loc}_{\Bun_P, C} (1))$$ if $H=U_P$, and $$\operatorname{Ff} (\overline{\operatorname{Loc}} (V)) = \widecheck\Sat_{M^\ab}(V) \star \operatorname{Ff}(\overline{\operatorname{Loc}}(1))$$ if $H=[P,P]$. 

\begin{remark}
 The ``unitary'' normalization of the action of $M$ is already present in the sheaf-theoretic setting as follows: Suppose that an object $x_{\check\lambda}$ belongs to $_{({\check\lambda})}\overline{\Bun}_P$ and can be obtained from $x_0$ via Hecke modifications at the distinguished object of $x_0$. Then the dimension of $_{({\check\lambda})}\overline{\Bun}_P\simeq C\times \Bun_P$ at $x_{\check\lambda}$ is $\left<{\check\lambda},2\rho_P\right>$ less than that of $_{(0)}\overline{\Bun}_P$ around $x_0$, where $\rho_P$ denotes the half-sum of roots in the unipotent radical of $P$, i.e.\ $\delta_P=e^{2\rho_P}$. Hence, by the aforementioned normalization of $IC$ sheaves, the contribution of the factor $IC_{({\check\lambda})}\overline{\Bun}_P$  (via Theorem \ref{BFGMtheorem}) to $\Phi^0({\check\lambda})$ will be $q^{\left<{\check\lambda},\rho_P\right>}$ times the contribution of the factor $IC_{(0)}\overline{\Bun}_P$ to $\Phi^0(0)$. Similarly for the strata of $\widetilde{\Bun}_P$.
\end{remark}

Thus, Theorem \ref{BFGMtheorem} translates to the statement of Theorem \ref{BFGMfunctions}:
\begin{itemize}
 \item If $H=U_P$:  $\Phi^0 = \sum_{i\ge 0} q^{-i}\widecheck\Sat_M\left(\operatorname{Sym}^i(\check{\mathfrak u}_P)\right)\star 1_{HK} = $ $$=\widecheck\Sat_M\left(\frac{1}{\wedge^{\operatorname{top}} (1-q^{-1}\check{\mathfrak u}_P)}\right) \star 1_{HK}.$$
 \item If $H=[P,P]$: $\Phi^0 = \sum_{i\ge 0} q^{-i}\widecheck\Sat_{M^\ab}\left(\operatorname{Sym}^i(\check{\mathfrak u}_P^f)\right)\star 1_{HK} =$ $$= \widecheck\Sat_{M^\ab}\left(\frac{1}{\wedge^{\operatorname{top}}(1-q^{-1}\check{\mathfrak u}_P^f)}\right) \star 1_{HK}.$$
\end{itemize}
Notice that in the last expression $\check{\mathfrak u}_P^f$ is considered as a representation of the maximal torus $\check A$ of $\check M$ determined by the principal $\mathfrak{sl}_2$-triple $(h,e,f)$ and, by restricting its character to the subvariety $e^{-\rho_M}\mathcal Z(\check M)$, as an element of $\mathcal H(M^\ab,M^\ab_0)$. This is the case studied in \cite{BK2}, and $\Phi^0$ is the function denoted by $c_{P,0}$ there.

\subsubsection{Connection to Eisenstein series} \label{connES}

Now we discuss our main conjecture when the variety is $X=\overline{U_P\backslash G}^\aff$ or $X=\overline{[P,P]\backslash G}^\aff$ under the (normalized) action of $M\times G$, resp.\ $M^{\textrm{ab}}\times G$. In the latter case, our Eisenstein series $E(\Phi,\omega,g)$ are the usual (degenerate, if $P$ is not the Borel) principal Eisenstein series normalized as in \cite{BK, BK2}, and hence $E(\Phi,\omega,g)$ is indeed meromorphic for all $\omega$.

It will be useful to recall how these meromorphic sections are related to the more usual sections $E(f,\omega,g)$, which are defined in the same way but with $f\in C_c^\infty(([P,P]\backslash G)(\Ad))$. We assume that $\Phi=\prod_v \Phi_v, f=\prod_v f_v$ and $S$ is a finite set of places (including $S_0$) such that for $v\notin S$ we have $\Phi_v=\Phi_v^0$ and $f_v=f_v^0:= 1_{U\backslash G (\mathfrak o_v)}$. Let us also assume for simplicity that for $v\in S$ we have $\Phi_v=f_v$ (a finite number of places certainly do not affect meromorphicity properties). Clearly, for $E(\Phi,\omega,g)$ and $E(f,\omega,g)$ to be non-zero the character $\omega$ must be unramified outside of $S$. Then by the results of the previous paragraph we have:
\begin{equation}\label{degES}
 E(\Phi,\omega,g)= L^{S}(e^{-\rho_M}\omega,\check{\mathfrak u}_P^f, 1) E(f,\omega,g)
\end{equation}
where $L^{S}(e^{-\rho_M}\omega, \check{\mathfrak u}_P^f,1)$ denotes the value at $1$ of the partial (abelian) $L$-function corresponding to the representation $\check{\mathfrak u}_P^f$, whose local factors (at each place $v$) are considered as rational functions on the maximal torus $\check A\subset \check M$ and evaluated  at the point $e^{-\rho_M}\omega_v\in e^{-\rho_M}\mathcal Z(\check M)\subset \check A$.

Now let us consider the case $X=\overline{U_P\backslash G}^\aff$. We let $\tau$ vary over a holomorphic family of cuspidal representations of $M\times G$ and let $\tau\mapsto \phi_\tau$ be a meromorphic section; write $\tau=\tau_1\otimes\tau_2$ according to the decomposition of the group $M\times G$, and assume that, accordingly, $\phi_\tau=\phi_{\tau_1}\otimes\phi_{\tau_2}$, a pure tensor. Assume for the moment that the central character of $\tau$ is sufficiently $X$-positive. If in the notation of Conjecture \ref{weakconjecture} we replace the group $G$ by the group $M\times G$, and perform the integration of the conjecture, but only over the factor $M(k)\backslash M(\Ad)$, then this integral can be written as:
\begin{eqnarray}\label{nprI}
\int_{M(k)\backslash M(\Ad)} \phi_\tau (m,g) \theta (\Phi,(m,g)) dm= \nonumber \\ = \phi_{\tau_2}(g) \int_{M(k)\backslash M(\Ad)} \phi_{\tau_1} (m) \theta (\Phi,(m,g)) dm.
\end{eqnarray}
It is valued in the space of functions on $G(k)\backslash G(\Ad)$. If $\Eis: I_{P(\Ad)}^{G(\Ad)}(\tau_1) \to C^\infty(G(k)\backslash G(\Ad))$ denotes the usual Eisenstein operator, then by unfolding the last integral we see that it is equal to the Eisenstein series:
\begin{equation}\label{nprES}
E_M(\Phi,\phi_1,g):= \Eis\left( \int_{M(\Ad)} \phi_{\tau_1}(m) (m\cdot \Phi) dm\right)(g)
\end{equation}
hence the connection to usual Eisenstein series.

\begin{proposition}\label{prES}
Assume that the partial $L$-function $L^{S}(\tau_1,{\mathfrak{\check u}_P}, 1)$ (for some large enough $S$) has meromorphic everywhere as $\tau_1$ is twisted by characters of $M$. Then the expression (\ref{nprI}) admits meromorphic continuation to all $\tau_1$.
\end{proposition}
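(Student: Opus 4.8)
The plan is to reduce Proposition~\ref{prES} to the meromorphic continuation of ordinary (principal/degenerate) Eisenstein series, by passing through the intermediate variety $Y:=\overline{U_P\backslash G}^\aff$ and the formula of Theorem~\ref{BFGMfunctions}. First I would make the connection to usual Eisenstein series explicit: unfolding the integral (\ref{nprI}) in the region of convergence (which is guaranteed by Proposition~\ref{moderategrowth}, since the central character of $\tau$ is assumed sufficiently $X$-positive) identifies it, up to the cusp form $\phi_{\tau_2}(g)$, with the Eisenstein series $E_M(\Phi,\phi_1,g)$ of (\ref{nprES}). The point of the unfolding is that the open $M\times G$-orbit on $X$ is $U_P\backslash G$ with stabilizer containing $U_P$, so summation over $X^+(k)$ against $\phi_{\tau_1}$ collapses, via the cuspidality of $\tau_1$ along $U_P$ (more precisely, using the Fourier expansion of $\phi_{\tau_1}$ along $U_P$ and the fact that only the constant term survives the integration against a $U_P$-invariant function), to a sum over $P(k)\backslash G(k)$, i.e.\ an Eisenstein series. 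Here one must check that the contributions of the non-open $M\times G$-orbits on $X$ vanish; by the discussion in \S\ref{sspreflag} the stabilizers of points in those orbits contain unipotent radicals of proper parabolics of $G$, so cuspidality of $\tau_1$ kills them --- this is the ``negligible orbits'' step alluded to before Theorem~\ref{preflagthm}, and I would spell it out using the structure theory of Proposition~\ref{structureaffine} together with the wavefront hypothesis.

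Next I would insert the basic-function computation. By Theorem~\ref{BFGMfunctions} (the case $H=U_P$), the basic vector $\Phi_v^0$ equals $\widecheck\Sat_M\bigl(1/{\wedge^{\operatorname{top}}(1-q_v^{-1}\check{\mathfrak u}_P)}\bigr)\star 1_{HK}$, and by the compatibility of the Satake normalizations (\S\ref{sssEisfunctions}) the effect of convolving the standard section $f_v^0=1_{U\backslash G(\mathfrak o_v)}$ with this element is exactly to multiply the corresponding unramified Eisenstein section by the local $L$-factor $L_v(\tau_{1,v},\check{\mathfrak u}_P,1)$; the argument is identical to the one giving (\ref{degES}) in the $[P,P]$ case. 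Therefore, choosing $S$ large enough that outside $S$ everything is unramified and $\Phi_v=\Phi_v^0$, one obtains a clean factorization
\begin{equation}
E_M(\Phi,\phi_1,g)= L^{S}(\tau_1,\check{\mathfrak u}_P,1)\cdot \Eis\bigl(f_{\tau_1}\bigr)(g),
\end{equation}
where $f_{\tau_1}$ is an ordinary ($K$-finite, $C_c^\infty$ away from $S$) section of $I_{P(\Ad)}^{G(\Ad)}(\tau_1)$, built from honest compactly supported data at the places of $S$. The right-hand factor $\Eis(f_{\tau_1})(g)$ has meromorphic continuation in $\tau_1$ by Langlands' theory of Eisenstein series, and the left-hand factor is meromorphic by the hypothesis of the proposition; hence their product, and therefore (\ref{nprI}) after restoring the $\phi_{\tau_2}$ factor, is meromorphic in $\tau_1$.

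The main obstacle, I expect, is the bookkeeping at the finitely many ``bad'' places and the justification that the analytic identity obtained by unfolding in the domain of convergence persists under meromorphic continuation. Concretely: (i) one must verify that replacing the local Schwartz vectors at $v\in S$ by generic $C_c^\infty(X_v^+)$-data (rather than $\Phi_v^0$) only changes the answer by a holomorphic-and-invertible, or at worst meromorphic, factor --- this follows because at a finite set of places one may freely adjust by functions whose local Mellin transform is a meromorphic multiple, as noted in Remark~\ref{afterconjecture}(2), but it needs to be phrased carefully so that $f_{\tau_1}$ is a \emph{standard} meromorphic section to which Langlands' theory applies; and (ii) one must argue that the regularized integral (\ref{nprI}), a priori defined only for sufficiently $X$-positive central character, agrees with $\phi_{\tau_2}(g)\cdot L^{S}(\tau_1,\check{\mathfrak u}_P,1)\cdot\Eis(f_{\tau_1})(g)$ as an identity of meromorphic families --- this is immediate once both sides are known to be meromorphic and to agree on an open set, but it relies on the moderate-growth estimate of Proposition~\ref{moderategrowth} to make sense of the left side as a genuine analytic object in the first place. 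Everything else (the unfolding, the Satake computation, the vanishing of negligible orbits) is either routine or already available from the cited results.
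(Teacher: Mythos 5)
Your plan is essentially the paper's proof: identify (\ref{nprI}), up to the cuspidal factor $\phi_{\tau_2}(g)$, with the Eisenstein series $E_M(\Phi,\phi_1,g)$ of (\ref{nprES}); use Theorem~\ref{BFGMfunctions} and the normalized Satake isomorphism to pull out $L^S(\tau_1,\check{\mathfrak u}_P,1)$; and conclude by Langlands' meromorphic continuation applied to the section built from $C_c^\infty$-data. That matches the argument in \S\ref{connES} exactly.

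There are, however, two digressions in the middle of your argument that do not belong here and indicate a confusion worth flagging. First, you invoke ``the cuspidality of $\tau_1$ along $U_P$'' and a Fourier expansion of $\phi_{\tau_1}$ along $U_P$ to justify the passage from (\ref{nprI}) to (\ref{nprES}). But $\tau_1$ is a cuspidal representation of $M$, and $U_P$ is not a subgroup of $M$; cuspidality plays no role in this unfolding. The passage is just the direct collapse of $\int_{M(k)\backslash M(\Ad)}\sum_{m_0\in M(k)}$ into $\int_{M(\Ad)}$, using that $X^+=U_P\backslash G \to P\backslash G$ is an $M$-torsor, so that the sum over $X^+(k)$ becomes a sum over $P(k)\backslash G(k)$ of an integral over $M(\Ad)$ against $\phi_{\tau_1}$. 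Second, you propose to check that contributions of non-open $M\times G$-orbits on $X$ vanish via Proposition~\ref{parind} and the wavefront hypothesis. In Proposition~\ref{prES} there is nothing to check: $\theta(\Phi,\cdot)$ is by definition a sum over $X^+(k)$ only, the $k$-points of the open orbit, so no other orbits enter. The ``negligible orbits'' step is needed in Proposition~\ref{equal} (where one replaces $X^+(k)$ by $\tilde X^+(k)$ for a pre-flag bundle); importing it here is an unnecessary detour and, in fact, the wavefront hypothesis is never used in the proof of Proposition~\ref{prES}. With these two items removed, the rest of your bookkeeping (choosing $\Phi_v=f_v$ at $v\in S$, observing the $L^S$-factor comes out of the normalized Satake at $v\notin S$, and appealing to the meromorphic continuation of the standard Eisenstein section) is precisely the paper's argument.
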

\begin{proof}
By the meromorphic continuation of Eisenstein series, it is enough to show that the integral $(\Phi,\phi_{\tau_1})\mapsto \int_{M(\Ad)} \phi_{\tau_1}(m)(m\cdot \Phi)  dm$, which represents a morphism: $\iota_{\tau_1}:\mathcal S(U_P\backslash G(\Ad))\to I_{P(\Ad)}^{G(\Ad)}(\tau_1)$, admits meromorphic continuation in $\tau_1$. This would be the case if $\Phi$ was in $C_c^\infty(U_P\backslash G(\Ad))$. The analogous morphism:  $C_c^\infty(U_P\backslash G(\Ad))\to I_{P(\Ad)}^{G(\Ad)}(\tau_1)$ will also be denoted by $\iota_{\tau_1}$.

Again, we let $S$ be a finite set of places containing $S_0$ and take functions $\Phi=\prod \Phi_v\in \mathcal S(U_P\backslash G(\Ad))$ and $f=\prod_v f_v\in C_c^\infty(U_P\backslash G(\Ad))$ such that for $v\notin S$  $\Phi_v=\Phi_v^0$ is the basic $M_0\times K$-invariant function of the previous paragraph, $f_v=f_v^0=1_{U_PK}$ and for $v\in S$ we have $\Phi_v=f_v$ (for simplicity). Moreover, we assume that $\tau_1$ is unramified for $v\notin S$, otherwise the integral will be zero.

We saw previously that $\Phi_v^0 = \widecheck\Sat_M\left(\frac{1}{\wedge^{\operatorname{top}}(1-q^{-1}\check{\mathfrak u}_P)}\right) \star f_v^0$. By definition of the Satake isomorphism and the equivariance of $\iota_\tau$, in the domain of convergence we have $\iota_{\tau_1} (\Phi) =  L^{S}(\tau_1,{\mathfrak{\check u}_P}, 1) \iota_{\tau_1}(f)$.

Therefore $\Eis(\iota_{\tau_1}(\Phi))= L^{S}(\tau_1,{\mathfrak{\check u}_P}, 1)  \Eis(\iota_{\tau_1}(f))$, and the claim follows from the meromorphic continuation of $\Eis(\iota_{\tau_1}(f))$.
\end{proof}

\begin{remarks}\begin{enumerate}
 \item The meromorphic continuation of $L^{S}(\tau_1,{\mathfrak{\check u}_P}, 1)$ is known in many cases, e.g.\ for $G$ a classical group and $\tau$ generic, by the work of Langlands, Shahidi and Kim, cf.\ \cite{CKM}.
 \item Notice that, as was also observed in \cite{BK,BK2}, the Eisenstein series (\ref{nprES}) has normalized functional equations without $L$-factors.
\end{enumerate}
\end{remarks}

\subsection{The Rankin-Selberg method} \label{ssRS}

According to \cite[\S 5]{BuRS}, the Rankin-Selberg method involves a cusp form on $G$ and an Eisenstein series on a group $\tilde G$, where we have either an embedding: $G\hookrightarrow\tilde G$ or an embedding $\tilde G\hookrightarrow G$, or ``something more complicated''. We certainly do not claim to explain all constructions which have been called ``Rankin-Selberg integrals'', but let us see how a large part\footnote{The multiplicity-one property that seems to underlie almost every integral representation of an $L$-function can be achieved by non-spherical subgroups if we put extra restrictions on the representations we are considering. For example, in the construction of the symmetric square $L$-function by Bump and Ginzburg \cite{BuGi}, we have $H=$ the diagonal copy of $\GL_n$ in $\GL_n\times \text{(a central quotient of)} \widetilde{\GL_n}^2$, where $\widetilde{\GL_n}$ denotes a metaplectic cover, but one restricts to certain ``exceptional'' (and induced-from-exceptional) representations on $\
widetilde{\GL_n}^2$. The examples that our method covers should be seen as the part of the method where such restrictions do not enter.} of this method is covered by our constructions.

Let $X$ be a pre-flag bundle; we will use the notation of \S \ref{sspreflag}. For notational simplicity (the arguments do not change), let us also assume that $L$ is a direct factor of $G$, i.e.\ $G=L\times G'$. Let $\Phi\in \mathcal S(X(\Ad))$. Recall that the $X$-theta series $\theta(\Phi,g)$ has been defined via a sum over $X^+(k)$, where $X^+$ denotes the open $G$-orbit on $X$. On the other hand, to relate our integrals to usual Eisenstein series, we need to sum over $\tilde X^+(k)$, where $\tilde X^+$ is the open $\tilde G$-orbit. Hence, we define:
$$\tilde \theta(\Phi,g)=\sum_{\gamma\in \tilde X^+(k)} \Phi(\gamma\cdot g).$$

We compare the integral of Conjecture \ref{weakconjecture} with the corresponding integral when $\theta$ is substituted by $\tilde \theta$:
\begin{proposition}\label{equal}
Suppose that $X$ is a wavefront spherical variety with the structure of a pre-flag bundle. If $\phi$ is a cusp form on $G$ (with sufficiently $X$-positive central character, so that the following integrals converge) then: \begin{equation}\int_{G(k)\backslash G(\Ad)} \phi(g) \theta(\Phi,g) dg = \int_{G(k)\backslash G(\Ad)} \phi(g) \tilde \theta(\Phi,g) dg.\end{equation}
\end{proposition}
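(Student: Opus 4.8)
The plan is to show that the difference $\tilde\theta(\Phi,g)-\theta(\Phi,g)=\sum_{\gamma\in(\tilde X^+\smallsetminus X^+)(k)}\Phi(\gamma\cdot g)$ integrates to zero against any cusp form. Since $\Phi$ is supported on $X(\Ad)$ (via the requirement \eqref{oneonsmooth} and the growth/compactness assumptions of \S\ref{Schwartz}), the sum is really over the $k$-points of $\tilde X^+$ lying in the integral (and hence, by properness, the ``good'') part of $X$. First I would decompose $\tilde X^+\smallsetminus X^+$ into $G$-orbits. Each such orbit $O$ is a spherical $G$-variety of strictly smaller dimension; choosing a base point $x_1\in O(k)$ with stabilizer $H_1=G_{x_1}$, we get $O\simeq H_1\backslash G$ and a contribution $\sum_{\gamma\in (H_1\backslash G)(k)}\Phi(\gamma g)$ to the theta series. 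So it suffices to prove: for each non-open $G$-orbit $O\subset\tilde X^+$, the integral $\int_{[G]}\phi(g)\sum_{\gamma\in O(k)}\Phi(\gamma g)\,dg$ vanishes for $\phi$ cuspidal. (One must also be slightly careful that $O(k)$ may be a union of several $G(k)$-orbits, but each is handled identically.)

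The key structural input is the geometry of a pre-flag bundle together with the \emph{wavefront} hypothesis. As observed in \S\ref{sspreflag}, the stabilizer $\tilde H_y=\tilde P_y/\text{(unipotent radical)}$-type description shows that the generic stabilizer on $\tilde X^+$ contains the unipotent radical of a parabolic. The point I would make precise is the analogous statement for the \emph{non-open} orbits: because $X$ is wavefront, the cone $\mathcal V$ is the image of the negative Weyl chamber, which forces the boundary orbits of the canonical (toroidal) compactification — and hence the boundary orbits inside any affine model, via the retraction of Proposition \ref{structureaffine} onto the homogeneous part $Y$ — to have stabilizers $H_1$ whose identity component contains $U_Q$ for some proper parabolic $Q\subsetneq G'$ (equivalently $\subsetneq G$, since $L$ acts only through the central/torus directions and $\mathcal Z(X)$ fixes $Y$ pointwise). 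Granting this, the inner sum $\sum_{\gamma\in(H_1\backslash G)(k)}\Phi(\gamma g)$ is, after unfolding, left-invariant under $U_Q(\Ad)$ (or at least: its integral against $\phi$ unfolds to an expression containing an integral of $\phi$ over $U_Q(k)\backslash U_Q(\Ad)$), which is the constant term of $\phi$ along $Q$ and therefore vanishes by cuspidality. This is exactly the ``negligible orbit'' phenomenon of the Rankin--Selberg method alluded to in the remark after Conjecture \ref{weakconjecture}; the proof is the standard unfolding argument, justified by absolute convergence for sufficiently $X$-positive central character (the first growth assumption of \S\ref{Schwartz} together with Proposition \ref{moderategrowth}).

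The main obstacle, and the step I would spend the most care on, is establishing that \emph{every} non-open $G$-orbit on $\tilde X^+$ has a stabilizer containing a proper unipotent radical — i.e.\ that there are no ``bad'' boundary orbits with reductive stabilizer. This is where the wavefront hypothesis is essential and where one genuinely uses the theory of spherical embeddings: I would argue via Theorem \ref{localstr} applied to the toroidal model $\hat X$ (with $\mathcal C(\hat X)=\mathcal C(X)\cap\mathcal V$), identifying each boundary $G$-orbit with the product of a boundary orbit of the toric slice $Y^c$ and the unipotent radical $U_{P(X)}$, and then checking on the toric slice that the relevant isotropy picks up the unipotent radical of the parabolic $P(X)$ or a larger standard parabolic. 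The wavefront condition $\mathcal V=\text{image of the negative chamber}$ is precisely what guarantees that the faces of $\mathcal C(\hat X)$ correspond to standard parabolics of $G$ in the expected way, so no boundary orbit is ``transverse'' to all parabolic directions. A secondary technical point is interchanging the integral over $[G]$ with the sum over orbits and with the unfolding; this is routine given the exponential decay of cusp forms and the polynomial growth bounds already proved, so I would dispatch it quickly and concentrate the exposition on the orbit-geometry claim.
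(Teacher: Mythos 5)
Your overall strategy matches the paper's: decompose $\tilde X^+(k)$ into $G(k)$-orbits, unfold the integral orbit by orbit, and kill each non-open orbit's contribution by cuspidality once you know its stabilizer contains the unipotent radical of a proper parabolic. The paper isolates exactly this last assertion as Proposition~\ref{parind} and deduces Proposition~\ref{equal} from it by the same unfolding you describe. So the architecture is right.

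The gap is in how you propose to prove the key geometric lemma. You suggest reading off the boundary stabilizers directly from Theorem~\ref{localstr}, ``identifying each boundary $G$-orbit with the product of a boundary orbit of the toric slice $Y^c$ and the unipotent radical $U_{P(X)}$, and then checking that the isotropy picks up $U_{P(X)}$ or a larger standard parabolic.'' This cannot work as stated: the local structure theorem says the action map $Y^c\times U_{P(X)}\to X^c_B$ is an \emph{isomorphism}, so $U_{P(X)}$ acts simply transitively on the second factor and in particular freely on $X^c_B$. Hence the stabilizer of any point $z\in X^c_B$ (boundary or not) satisfies $G_z\cap U_{P(X)}=\{1\}$, which is the opposite of what you want. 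The unipotent radical that \emph{does} sit inside the boundary stabilizer is that of a parabolic $\bar P$ \emph{opposite} to a standard one, and which standard parabolic occurs depends on the particular boundary orbit, not just on $P(X)$. (You can check this concretely on $X=\Mat_2$ for $\GL_2\times\GL_2$: the stabilizer of a rank-one matrix is contained in $B\times B^-$, opposite to the Borel $B\times B^-$ — not in $P(X)\times P(X)^-$ or anything of the form ``$U_{P(X)}$ or larger.'')

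The paper's actual proof of Proposition~\ref{parind} does not try to see the unipotent radical in the local coordinates at all. It first reduces to the case of a wonderful (simple complete toroidal) compactification by constructing birational proper morphisms, computes the spherical root datum of a boundary orbit $Y$ (its spherical roots form a proper subset $\Theta\subsetneq\Delta_X$, and $\Delta(Y)=\Delta(X)$), and then invokes Luna's combinatorial criterion (Proposition~\ref{parindcriterion}): $Y$ is parabolically induced iff $\Delta(Y)\cup\operatorname{supp}(\Delta_Y)\subsetneq\Delta$. The wavefront hypothesis enters precisely there, via a convexity argument: the surjectivity of the antidominant chamber onto $\mathcal V$ implies that for any proper $\Theta\subsetneq\Delta_X$ some simple root of $G$ outside $\Delta(X)$ escapes the support of $\Theta$. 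Your sketch omits Luna's criterion entirely and replaces it with a direct stabilizer computation that does not go through, so this is the step you would need to rebuild.
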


Assume this proposition for now, and let us prove Theorem \ref{preflagthm}; at the same time, we will see that the integral of Conjecture \ref{weakconjecture} is equal to a Rankin-Selberg integral.

Without loss of generality, $\Phi=\prod_v \Phi_v$, and $\phi=\phi_1(l)\phi_2(g)$ according to the decomposition $G=L\times G'$. By Assumption \ref{mainassumption}, and repeating the argument of \S \ref{ssperiods}, we may write $\Phi$ as the convolution with an element $h\in \mathcal H(G'(\Ad))$ of a Schwartz function $\Phi^y$ on $X_y(\Ad)$, where $y\in Y(k)$ and the Schwartz function on $X_y(\Ad)$ is considered as a generalized function on $\tilde X^+(\Ad)$. Then, as in \S \ref{ssperiods}:
$$\int_{G(k)\backslash G(\Ad)} \phi(g) \tilde \theta(\Phi,g) dg = \int_{G_y(k)\backslash G_y(\Ad)} \check h \star \phi (h) \tilde \theta_{\tilde X_y^+}(\Phi^y,h),$$
where $\tilde \theta_{\tilde X_y^+}(\Phi,g)$ denotes the theta series for the $\tilde G_y$-spherical variety $X_y$.

By the decomposition $G=L\times G'$ this is equal to:
$$\int_{G_y'(k)\backslash G_y'(\Ad)} \check h\star \phi_2 (g) \int_{L(k)\backslash L(\Ad)} \phi_1 (l) \tilde \theta_{\tilde X_y^+}(\Phi^y,lg) dl dg.$$

The inner integral is equal to the Eisenstein series $E_L(\Phi, \phi_1, g')$ on the group $\tilde G_y'$, in the notation of (\ref{nprES}), or a degenerate Eisenstein series as in (\ref{degES}), or a product of such\footnote{Rankin-Selberg constructions with products of Eisenstein series have often been encountered in the literature, e.g.\ \cite{BFG, GH}.}, and it has meromorphic continuation under the assumption that $L^{S}(\tau_1,{\mathfrak{\check u}_{\tilde P}}, 1)$ does. Hence, we see that \emph{the integral of conjecture \ref{weakconjecture} is equal to the Rankin-Selberg integral:}
\begin{equation}\label{RSintegral}
\int_{G_y'(k)\backslash G_y'(\Ad)} \check h\star \phi_2(g) E_L(\Phi, \phi_1, g) dg
\end{equation}
and this also completes the proof of Theorem \ref{preflagthm}. In the language of \cite[\S 5]{BuRS}, our formalism combines the appearance of a subgroup $G_y\subset G$ with an embedding of it into another group: $G_y\hookrightarrow \tilde G_y$.

\subsubsection{Proof of Proposition \ref{equal}: Negligible orbits.}

Proposition \ref{equal} will follow from the following statement on the structure of certain spherical varieties:

\begin{proposition}\label{parind}
 If $X$ is a wavefront spherical variety for $G$ with $\Aut^G(X)$ finite, then the isotropy groups of all non-open $G$-orbits contain the unipotent radical of a proper parabolic of $G$.
\end{proposition}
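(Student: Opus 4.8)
The plan is to reduce the problem to the structure theory of the canonical toroidal (wonderful) compactification. Since $\Aut^G(X)$ is finite, $\mathcal V$ is strictly convex, so by the discussion in \S\ref{ssstratification} the variety $X^+$ admits a canonical complete simple toroidal embedding $\bar X$ with $\mathcal C(\bar X)=\mathcal V$, and there is a proper surjective morphism $\bar X' \to X$ from a (further) toroidal embedding built from the colored cone of $X$. The key point is that every non-open $G$-orbit $\mathcal O$ of $X$ either lies already in $X^+$ — impossible, since $X^+$ is the open $G$-orbit and is homogeneous — or corresponds, via this morphism, to a $G$-orbit in a boundary divisor of the toroidal picture. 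So I would first argue that it suffices to prove the statement for the $G$-orbits in a toroidal embedding $X^c$ of $X^+$: the isotropy group of a point $x$ in a non-open orbit of $X$ is contained in the isotropy group of any preimage of $x$ under $X^c \to X$ (the morphism is $G$-equivariant), so if the latter contains a unipotent radical of a proper parabolic, so does the former.

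\emph{Next I would} analyze a single $G$-orbit $\mathcal O_F$ in a simple toroidal embedding using Theorem~\ref{localstr} (Brion--Luna--Vust). Pick the simple toroidal embedding $X^c$ whose closed orbit is $\overline{\mathcal O_F}$'s minimal orbit, or rather work orbit by orbit: for a boundary $G$-orbit $\mathcal O$, the local structure theorem gives an open $P(X)$-stable affine piece $X^c_B \cong Y^c \times U_{P(X)}$ meeting $\mathcal O$, where $Y^c$ is an affine toric variety for $A_X$. A point $y$ on the boundary of $Y^c$ has $A_X$-isotropy a subtorus, and crucially the $U_{P(X)}$-factor is fixed pointwise in the relevant sense, so the $G$-isotropy group $G_y$ contains $U_{P(X)}$ — the unipotent radical of the \emph{associated parabolic} $P(X)$. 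But $P(X)$ may fail to be proper (it is proper iff $X^+$ is not $G$-homogeneous-with-$P(X)=G$, i.e.\ iff $\Delta(X) \ne \Delta$); here the \textbf{wavefront} hypothesis must enter. The wavefront condition says $\mathcal V$ is exactly the image of the negative Weyl chamber, which forces $\Delta_X$ and $\Delta(X)$ together to "see" all of $\Delta$ in a way that guarantees, for each boundary orbit, that the relevant parabolic whose unipotent radical sits inside the isotropy group is \emph{proper}. More precisely, each codimension-one boundary orbit corresponds to a spherical root or a color; the wavefront property ties spherical roots to actual simple roots of $G$, and one shows the isotropy group of a point in the divisor attached to a spherical root $\gamma$ contains $U_{P_\gamma}$ for a proper parabolic $P_\gamma \supsetneq B$ determined by $\gamma$ and $\Delta(X)$. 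Passing to lower orbits only enlarges the unipotent part.

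\textbf{The hard part will be} making the last step — identifying, for each boundary $G$-orbit, an honest \emph{proper} parabolic whose unipotent radical lies in the isotropy group — uniform and correct, rather than just producing $U_{P(X)}$ which could be trivial. I expect to handle this by induction on the rank of $X$ (equivalently on $\dim \mathcal V$), using the "localization at a spherical root" construction: for a codimension-one orbit $\mathcal O_\gamma$ corresponding to a spherical root $\gamma \in \Delta_X$, one has a rank-one spherical subquotient, and the wavefront hypothesis for $X$ descends to it; the classification of rank-one wavefront spherical varieties (built from $\SL_2$-varieties as in \cite{KnR1}, cf.\ the argument behind Theorem~\ref{stratification}) shows directly that the generic isotropy on $\mathcal O_\gamma$ contains $U_{P}$ for $P$ the proper parabolic with Levi of semisimple rank equal to the rank of the root subsystem generated by $\Delta(X)$ and $\gamma$. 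Lower-codimension orbits are handled by localizing repeatedly, each localization adding more of $U$ to the isotropy group while the wavefront property (which is inherited by these localizations, per \cite[\S5]{KnLV}) keeps the resulting parabolic proper. A secondary technical point, which I would dispatch quickly, is checking that the reduction via the proper surjection $X^c \to X$ in the first paragraph genuinely identifies \emph{every} non-open $G$-orbit of $X$ with (the image of) one in the toroidal model — this follows because affine spherical varieties are simple and the morphism is surjective, so no boundary orbit of $X$ is missed.
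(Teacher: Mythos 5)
The reduction to toroidal embeddings in your first paragraph is in the right spirit, though you have the containment backwards: for a $G$-equivariant $\pi: X^c \to X$ and $\pi(\tilde x) = x$, one has $G_{\tilde x} \subseteq G_x$, not the reverse. That is actually the direction you want for your conclusion, so the slip is harmless. The paper's reduction is slightly more delicate --- it goes from a simple toroidal $X$ \emph{down} to the wonderful compactification $\bar X$, using that a non-open orbit $Y$ of $X$ is a torus bundle over a non-open orbit of $\bar X$, so the unipotent part of the isotropy is unchanged --- but your version (pull back along a proper surjection from a toroidal embedding) also works for the reduction.

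The genuine gap is in the middle step. The local structure theorem of Brion--Luna--Vust gives $X^c_B \simeq Y^c \times U_{P(X)}$ with $U_{P(X)}$ acting simply transitively on the second factor; in particular, the $P(X)$-stabilizer of a point $(y_0, 1)$ is contained in $L(X)$ and intersects $U_{P(X)}$ trivially. So your claim that ``the $G$-isotropy group $G_y$ contains $U_{P(X)}$'' is false. This is not just a matter of $P(X)$ failing to be proper: even when $P(X) \ne G$, the local structure theorem simply does not place $U_{P(X)}$ inside the isotropy group of a boundary point. You correctly sense that something is missing (``The hard part will be...''), but the replacement you sketch --- induction on rank via localization at a spherical root and the $\SL_2$ classification --- would essentially amount to reproving the ingredient the paper actually invokes, namely Luna's criterion for parabolic induction (Proposition \ref{parindcriterion}): a homogeneous spherical variety $Y$ is induced from a parabolic opposite to a standard $P$ if and only if $\Delta(Y)$ together with the support of the spherical roots of $Y$ lies inside the simple roots of the Levi of $P$. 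Once you have this criterion, the problem becomes purely combinatorial: since any non-open orbit $Y$ in the wonderful compactification has $\Delta(Y) = \Delta(X)$ and its spherical roots form a \emph{proper} subset $\Theta \subsetneq \Delta_X$, you must show some $\alpha \in \Delta \smallsetminus \Delta(X)$ lies outside the support of $\Theta$. This is exactly where wavefront enters: it guarantees that $\mathfrak f_{\Delta(X)}$ (the face of the anti-dominant chamber orthogonal to $\Delta(X)$) surjects onto $\mathcal V$, so a face $\mathfrak f_s$ of $\mathfrak f_{\Delta(X)}$ mapping onto $\mathcal V_\Theta$ is a proper face of $\mathfrak f_\Delta$, producing the desired $\alpha$. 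Your paragraph gestures at this kind of interplay between $\Delta(X)$, $\Delta_X$, and $\Delta$, but without Luna's criterion as the bridge, the argument does not close.
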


From this, Proposition \ref{equal} follows easily; in the domain of convergence we have:
$$\int_{G(k)\backslash G(\Ad)} \phi(g)\tilde\theta(\Phi,g)= \sum_{\xi\in [\tilde X^+(k)/G(k)]} \int_{G_\xi(k)\backslash G(\Ad)} \phi(g) g\cdot \Phi(\xi) dg$$ where $[\tilde X^+(k)/G(k)]$ denotes any set of representatives for the set of $G(k)$-orbits on $\tilde X^+(k)$. Notice that, by the multiplicity-freeness assumption on $X$, the $k$-points of the open $G$-orbit form a unique $G(k)$-orbit. The summand corresponding to $\xi$ can be written:
$$\int_{G_\xi(\Ad)\backslash G(\Ad)} g\cdot \Phi(\xi) \int_{G_\xi(k)\backslash G_\xi(\Ad)} \phi(hg) dh dg$$
Since $\Aut^G(\tilde X^+/\mathcal Z(X))$ is finite, for $\xi$ in the non-open orbit the stabilizer $G_\xi$ contains the unipotent radical of a proper parabolic by Proposition \ref{parind}, and since $\phi$ is cuspidal the inner integral will vanish. Therefore, only the summand corresponding to the open orbit survives, which folds back to the integral: $$\int_{G(k)\backslash G(\Ad)} \phi(g)\theta(\Phi,g).$$ 

Proposition \ref{parind}, in turn, rests on the following result of Luna. A $G$-homogeneous variety $Y$ is said to be \emph{induced} from a parabolic $P$ if it is of the form $Y'\times^P G$, where $Y'$ is a homogeneous spherical variety for the Levi quotient of $P$; equivalently, $Y=H\backslash G$, where $H\subset P$ contains the unipotent radical of $P$.

\begin{proposition}\cite[Proposition 3.4]{Lu}\label{parindcriterion}
 A homogeneous spherical variety $Y$ for $G$ is induced from a parabolic $\bar P$ (assumed opposite to a standard parabolic $P$) if and only if the union of $\Delta(Y)$ with the support\footnote{The support of a subset in the span of $\Delta$ is the smallest set of elements of $\Delta$ in the span of which it lies.} of the spherical roots of $Y$ is contained in the set of simple roots of the Levi subgroup of $P$.
\end{proposition}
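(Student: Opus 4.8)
The plan is to prove both implications by comparing the invariants of $Y$ with those of the relevant flag variety and its parabolic induction, using the local structure theorem (Theorem~\ref{localstr}), the Luna--Vust classification of morphisms (Theorem~\ref{classemb} and its relative form), and Knop's analysis of the action of minimal parabolics on colors. Throughout write $S=\Delta(Y)\cup\supp(\Delta_Y)$ and, for $I\subseteq\Delta$, let $P_I\supseteq B$ be the standard parabolic with $\Delta(P_I)=I$, $\bar P_I$ its opposite, and $L_I=P_I\cap\bar P_I$ their common Levi. A first, purely group-theoretic remark reduces the ``if'' direction to a single case: if $Y$ is induced from $\bar P_{I_0}$ and $I_0\subseteq I$, then $Y$ is induced from $\bar P_I$ (from $U_{\bar P_{I_0}}\subseteq H\subseteq\bar P_{I_0}$ one gets $U_{\bar P_I}\subseteq U_{\bar P_{I_0}}\subseteq H\subseteq\bar P_{I_0}\subseteq\bar P_I$), so it suffices to prove that $Y$ is induced from $\bar P_{S}$.

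For the ``only if'' direction I would argue as follows. Suppose $Y=Y'\times^{\bar P}G$ with $\bar P=\bar P_I$ and $Y'$ a spherical $L_I$-variety. The projection $q\colon Y\to\bar P\backslash G$ is $G$-equivariant with fibre $Y'$; it is a standard feature of parabolic induction that it preserves the weight lattice and the valuation cone, so $\Delta_Y=\Delta_{Y'}$, and since the spherical roots of $Y'$ are non-negative combinations of simple roots of $L_I$, we get $\supp(\Delta_Y)\subseteq I$. For $\Delta(Y)$: the colors of $\bar P\backslash G$ are the Schubert divisors $D_\alpha$ ($\alpha\in\Delta\smallsetminus I$), each one moved by $P_\alpha$; the $q$-preimage of $D_\alpha$ is a color of $Y$ still moved by $P_\alpha$, so $\alpha\notin\Delta(Y)$. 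Taking complements gives $\Delta(Y)\subseteq I$, hence $S\subseteq I$.

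For the ``if'' direction (now with $I=S$) the plan is to build the induction structure by hand. For each $\alpha\in\Delta\smallsetminus S$ we have $\alpha\notin\Delta(Y)$, and neither $\alpha$ nor $2\alpha$ is a spherical root, so by Knop's classification of the $P_\alpha$-action on colors $\alpha$ is of ``type $b$'': $P_\alpha\mathring Y$ is the union of $\mathring Y$ with a single color $D_\alpha$. I would then show that $\mathcal F:=\{D_\alpha:\alpha\in\Delta\smallsetminus S\}$ is a distinguished family of colors in the sense of Luna--Vust, hence defines a $G$-equivariant surjection $q\colon Y\to Y_1=H_1\backslash G$ with $H\subseteq H_1$; the hypothesis $\supp(\Delta_Y)\cap(\Delta\smallsetminus S)=\emptyset$ is exactly what makes the associated colored datum admissible, i.e.\ ensures that contracting the $D_\alpha$ does not clash with any invariant valuation and that the valuation cone of $Y$ surjects onto that of $Y_1$. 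One then identifies $Y_1$ with the flag variety $\bar P_S\backslash G$ by matching invariants --- $Y_1$ carries no non-constant $B$-semiinvariant function and no spherical root, its colors are the images of the $D_\alpha$, and its associated parabolic is $P_S$ --- so that by the Luna--Vust classification $Y_1\cong\bar P_S\backslash G$ and, after compatible choices of base points, $H\subseteq\bar P_S$. Finally one checks $U_{\bar P_S}\subseteq H$: the fibre $F=H\backslash\bar P_S$ of $q$ is a spherical variety for $\bar P_S$ whose spherical roots are precisely the (all ``vertical'') spherical roots $\Delta_Y$ of $Y$, which by hypothesis lie in the span of $S$; since the root subgroups composing $U_{\bar P_S}$ have weights outside $\langle S\rangle$, a nontrivial action of $U_{\bar P_S}$ on $F$ would produce a spherical root of $F$ --- hence of $Y$ --- with support meeting $\Delta\smallsetminus S$, a contradiction. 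Therefore $U_{\bar P_S}$ acts trivially on $F$, $U_{\bar P_S}\subseteq H$, and $Y=Y'\times^{\bar P_S}G$ with $Y'=(H/U_{\bar P_S})\backslash L_S$, as wanted.

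The main obstacle, I expect, is the heart of the ``if'' direction: showing that $\mathcal F=\{D_\alpha\}_{\alpha\notin S}$ really is a distinguished family whose quotient is the flag variety $\bar P_S\backslash G$, together with the last step upgrading ``$H\subseteq\bar P_S$'' to ``$U_{\bar P_S}\subseteq H$''; both steps consume the support hypothesis, and both are cleanest if carried out inside a toroidal compactification of $Y$, where Theorems~\ref{localstr} and \ref{classemb} apply directly and one then restricts to the open orbit. An alternative route avoids the explicit colored-cone bookkeeping by invoking uniqueness of a homogeneous spherical variety with a prescribed spherical system: realize the localized spherical system of $L_S$ by some $Y'$, form $Y'\times^{\bar P_S}G$, and check it has the same spherical system as $Y$; the work is then concentrated in verifying that localization-then-induction reproduces the original combinatorial data.
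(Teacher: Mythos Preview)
The paper does not give its own proof of this proposition: it is quoted verbatim from Luna \cite[Proposition 3.4]{Lu} and used as a black box in the proof of Proposition \ref{parind}. So there is no ``paper's proof'' to compare against; what you have written is an attempt to reconstruct Luna's argument.

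Your outline is broadly on the right track and close in spirit to Luna's. The ``only if'' direction is fine. In the ``if'' direction, the identification of each $\alpha\in\Delta\smallsetminus S$ as type (b) is the correct combinatorial input, and constructing the quotient $q\colon Y\to \bar P_S\backslash G$ via the corresponding colors is the standard move. The soft spot is your final step: the assertion that a nontrivial action of $U_{\bar P_S}$ on the fibre $F=H\backslash\bar P_S$ ``would produce a spherical root of $F$ --- hence of $Y$ --- with support meeting $\Delta\smallsetminus S$'' is not a proof as stated. Nontrivial unipotent actions do not automatically manufacture spherical roots, and in any case the spherical roots of $F$ as a $\bar P_S$-variety are computed with respect to a Borel of $\bar P_S$, which is $B_{L_S}\cdot U_{\bar P_S}$, not $B_{L_S}$; you would need to explain carefully why the weights that appear are constrained to the span of $S$. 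A cleaner route is to exploit directly what type (b) gives you at the level of stabilizers: for each such $\alpha$ one can choose the base point $y_0\in\mathring Y$ so that $U_{-\alpha}\subset G_{y_0}$, and the support hypothesis is exactly what lets you make these choices compatibly for all $\alpha\notin S$ simultaneously (this is how Luna proceeds). Alternatively, the observation that $B\cap\bar P_S=B_{L_S}$ already has a dense orbit on $F$ (from $B\cdot\bar P_S$ being open in $G$) can be upgraded to triviality of the $U_{\bar P_S}$-action via a dimension count once you know $\varchi(Y)$ lies in the character lattice of $L_S$, but that too uses the type-(b) information ($\rho(D_\alpha)=\check\alpha$) more explicitly than your sketch does.
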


\begin{proof}[Proof of Proposition \ref{parind}]
 For every $G$-orbit $Y$ in a spherical variety $X$ there is a simple toroidal variety $\tilde X$ with a morphism $\tilde X\to X$ which is birational and whose image contains $Y$. Therefore, it suffices to assume that $X$ is a simple toroidal variety.

Moreover, if $\bar X$ denotes the wonderful compactification of $X^+$ (i.e.\ the simple toroidal compactification with $\mathcal C(\bar X)=\mathcal V$) then every simple toroidal variety $X$ admits a morphism $X\to \bar X$ which, again, is birational and has the property that every non-open $G$-orbit on $X$ goes to a non-open $G$-orbit in $\bar X$. Indeed, any non-open $G$-orbit $Y\subset X$ corresponds to a non-trivial face of $\mathcal C(X)$, and its character group $\varchi(Y)$ is the orthogonal complement of that face in $\varchi(X)$, which is of lower rank than $\varchi(X)$, therefore $Y$ has to map to an orbit of lower rank. 
Moreover, $Y$ is a torus bundle over its image. This reduces the problem to the case where $X$ is a wonderful variety, which we will now assume.

By Proposition \ref{parindcriterion}, it suffices to show that the union of $\Delta(X)$ and the support of the spherical roots of $Y$ is not the whole set $\Delta$ of simple roots. The spherical roots of $Y$ are a proper subset of the spherical roots of $X$, and $\Delta(Y)=\Delta(X)$. It therefore suffices to prove that for any proper subset $\Theta\subset\Delta_X$ there exists a simple root $\alpha\in \Delta\smallsetminus \Delta(X)$ such that $\alpha$ is not contained in the support of $\Theta$.

Denote $\mathfrak a^*:=\varchi(A)^*\otimes \QQ$, $\mathfrak a^*_{P(X)}=(\Delta(X))^\perp\subset\mathfrak a^*$, and consider the canonical quotient map: $q:\mathfrak a \to \mathcal Q$. Denote by $\mathfrak f_\emptyset \subset \mathfrak a^*$ the anti-dominant Weyl chamber in $\mathfrak a$. Every set of spherical roots $s\subset\Delta_X$ corresponds to a face $\mathcal V_s\subset\mathcal V=\mathcal V_\emptyset\subset \mathcal Q$ (more precisely, $\mathcal V_s$ is the face spanning the orthogonal complement of $s$), and similarly every set $r\subset \Delta$ of simple roots of $G$ corresponds to a face $\mathfrak f_r\subset \mathfrak f_\emptyset$. The simple roots in the support of $\gamma\in \Delta_X$ are those corresponding to the largest face $\mathfrak f$ of $\mathfrak f_\emptyset$ which is contained in $q^{-1}(\mathcal V_{\{\gamma\}})$. Notice that the maximal vector subspace $\mathfrak f_\Delta$ of $\mathfrak f_\emptyset$ maps into the maximal vector subspace $\mathcal V_{\Delta_X}$ of $\mathcal V$.

By assumption, $\mathfrak f_\emptyset$ surjects onto $\mathcal V$. Moreoever, since every element of $\mathfrak f_\emptyset$ can be written as a sum of an element in $\mathfrak f_{\Delta(X)}$ and a non-negative linear combination of $\check \Delta(X):=\{\check\alpha|\alpha\in\Delta(X)\}$, and since $\check\Delta(X)$ is in the kernel of $\mathfrak a\to\mathcal Q$, it follows that $\mathfrak f_{\Delta(X)}$ surjects onto $\mathcal V$. Now let $\Theta\subset\Delta_X$ be a proper subset. Let $\mathfrak f_s$ be a face of $\mathfrak f_{\Delta(X)}$ which surjects onto $\mathcal V_\Theta$. Since $\mathfrak f_s\neq \mathfrak f_\Delta$, there is an $\alpha\in \Delta\smallsetminus \Delta(X)$ which is not in the support of $\Theta$. 
\end{proof}

\subsection{Tensor product $L$-functions of $GL_2$ cusp forms} \label{sstensor}

In section \ref{secSchwartz} we proposed a general conjecture involving distributions which are obtained from the geometry of an affine spherical variety $X$, and in this section we saw how this conjecture is true, and gives rise to period- and Rankin-Selberg integrals, in the case that $X$ admits the structure of a ``pre-flag bundle''. It was written above that such a structure should be considered essentially irrelevant and a matter of ``chance''. We now wish to provide some evidence for this point of view by recalling the known constructions of $n$-fold tensor product $L$-functions for $\GL_2$, where $n\le 3$. The point is that while these constructions seem comletely different from the point of view of Rankin-Selberg integrals, from the point of view of spherical varieties they are completely analogous!

\label{ssrelatedtolfunction}Before we consider the specific example, let us become a bit more precise about what it means that \emph{a period integral is related to some $L$-value}. Let $\pi=\otimes' \pi_v$  be an (abstract) unitary representation of $G(\Ad)$, the tensor product of unitary irreducible representations $\pi_v$ of $G(k_v)$ with respect to distinguished unramified vectors $u_v^0$ (for almost every place $v$) of norm 1.  Let $\mathcal P$ be a functional on $\pi$. In our applications the functional $\mathcal P$ will arise as the composition of a cuspidal automorphic embedding $\nu:\pi \to L^2_\cusp(G(k)\backslash G(\Ad))$, assumed unitary, with a period integral or, more generally, the pairing (\ref{intconj1}) with a fixed $X$-theta series. Let $\rho$ be a representation of the dual group, and let $L(\pi,\rho,s)$ denote the value of the corresponding $L$-function at the point $s$. We say that $|\mathcal P|^2$ is \emph{related to $L(\pi,\rho,s)$} if there exist non-zero skew-symmetric forms: $\
Lambda_v:\pi_v\otimes \bar \pi_v\to \CC$ for every $v$ such that for any large enough set of places $S$, and for a vector $u=\otimes_{v\in S} u_v^0 \otimes_{v\notin S}u_v$ one has: $|\mathcal P(u)|^2= L^S(\pi,\rho,s) \cdot \prod_{v\in S} \Lambda_v (u_v, \overline u_v).$ (Of course, for this to happen we must have $\Lambda_v(u_v^0,\overline u_v^0)=L_v(\pi_v,\rho_v, s)$.)
Moreover, it is required that each $\Lambda_v$ has a definition which has no reference to any other representation but $\pi_v$. The reader will notice that the last condition does not stand the test of mathematical rigor; however, not imposing it would make the rest of the statement void up to whether $\mathcal P$ is zero or not. In practice, the $\Lambda_v$'s will be given by reference to some non-arithmetic model for $\pi_v$. See \cite{II} for a precise conjecture in a specific case, and \cite{SV} for a more general but less precise conjecture.\footnote{For the sake of completeness, we should mention that when $\mathcal P$ comes from a period integral one should in general modify the above conjecture by some ``mild'' arithmetic factors, such as the sizes of centralizers of Langlands parameters -- see \cite{II}. However, in the example we are about to discuss there is no such issue since the group is $\GL_2$.}

\begin{example}
 If $\mathcal P$ denotes the Whittaker period: $$\phi\mapsto \int_{U(k)\backslash U(\Ad)} \phi(u) \psi^{-1}(u) du$$ (where $\psi$ is a generic idele class character of the maximal unipotent subgroup) on cusp forms for $G=\GL_n$, then $|\mathcal P|^2$ is related to the $L$-value:
$$ \frac{1}{L(\pi,\operatorname{Ad}, 1)}$$
cf.\ \cite{Ja, SV}. Notice that the examples which we are about to discuss admit ``Whittaker unfolding'' and this factor will enter, although most references introduce a different normalization and ignore this factor.
\end{example}

Now we are ready to discuss our example: Let $n$ be a positive integer, $G=(\GL_2)^n \times \Gm$, and let $H$ be the subgroup:
$$\left\{ \left.\left(\begin{array}{cc} a & x_1\\ & 1\end{array}\right) \times \left(\begin{array}{cc} a & x_2\\ & 1\end{array}\right) \times \cdots \times\left(\begin{array}{cc} a & x_n\\ & 1\end{array}\right) \times a\right| x_1+ x_2 + \dots+ x_n = 0\right\}.$$
We let $X=\overline{H\backslash G}^\aff$. As usual, we normalize the action of $G$ on functions on $X^+$ so that it is unitary with respect to the natural measure. Let us see that for $n=1,2,3$ the variety $X$ admits the structure of a pre-flag bundle, and hence the integral of Conjecture \ref{weakconjecture} can be interpreted as a Rankin-Selberg integral, as discussed above:

\begin{itemize}
 \item $n=1$. Here $\overline{H\backslash G}^\aff=H\backslash G$ and we get the integral (\ref{hecke}) of Hecke. If $\tau_s=\tau\otimes |\bullet|^s$, where $\tau$ is a cuspidal representation of $\GL_2$ (for simplicity: with trivial central character), the square of the absolute value of the corresponding linear functional on $\tau_s\otimes\widetilde{\tau_s}$ is related to the $L$-value:
$$\frac{L(\tau, \frac{1}{2}+s) L(\tilde \tau, \frac{1}{2}-s)}{L(\tau,\operatorname{Ad},1)}.$$
 \item $n=2$. Here the projection of $H$ to $GL_2^2$ is conjugate to the mirabolic subgroup of $GL_2$ embedded diagonally. Therefore, the affine closure of $H\backslash G$ is equal to the bundle over $\GL_2^\diag\backslash (\GL_2)^2$ with fiber equal to the affine closure of $U_2\backslash \GL_2$, where $U_2$ denotes a maximal unipotent subgroup of $\GL_2$. Corresponding to this pre-flag bundle is a Rankin-Selberg integral ``with the Eisenstein series on the smaller group'' $\GL_2^\diag$, namely the classical integral of Rankin and Selberg. If $\tau=\tau_1\otimes\tau_2\otimes |\bullet|^s$ is a cuspidal automorphic representation of $G$ (for simplicity: with trivial central character), the square of the absolute value of the corresponding integral is related to the $L$-value:
$$\frac{L(\tau_1\otimes\tau_2, \frac{1}{2}+s) L(\tilde \tau_1\otimes\tilde\tau_2, \frac{1}{2}-s)}{L(\tau,\operatorname{Ad},1)}.$$
 \item $n=3$. In this case there is a structure of a pre-flag variety not on $X$, but on $X^0$: the corresponding spherical variety for the subgroup $G^0=\{(g_1,g_2,g_3,a)\in G | \det(g_1)=\det(g_2)=\det(g_3)\}$. The structure of a pre-flag variety involves the group $\tilde G =\operatorname{GSp}_6$ and the subgroup $\tilde H=[\tilde P,\tilde P]$, where $\tilde P$ is the Siegel parabolic -- this is a construction of Garrett \cite{GaTr}. The group $(\GL_2^3)^0$ is embedded in $\GSp_6$ as $(\GSp_2^3)^0$. Then, according to \cite[Corollary 1 to Lemma 1.1]{PSRtriple} the group $G^0$ has an open orbit in $[\tilde P,\tilde P]\backslash \tilde G$ with stabilizer equal to $H$. We claim:
\begin{lemma}
 The affine closure $X^0$ of $H\backslash G^0$ is equal to the affine closure of $[\tilde P,\tilde P]\backslash \tilde G$.
\end{lemma}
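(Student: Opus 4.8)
The plan is to deduce the lemma from the standard principle that the affine closure of a strongly quasi-affine variety does not change when one removes a closed subset of codimension $\ge 2$. Set $Y=[\tilde P,\tilde P]\backslash \GSp_6$, a homogeneous $\GSp_6$-variety, and let $Y^+$ denote the open $G^0$-orbit on $Y$; by \cite[Corollary 1 to Lemma 1.1]{PSRtriple} one has $Y^+=H\backslash G^0$, and $Y^+$ is dense in $Y$ since $Y$ is irreducible. First I would record two formal facts. (1) $Y$ is quasi-affine, because $[\tilde P,\tilde P]$ carries no nontrivial character; being moreover homogeneous and spherical, it is strongly quasi-affine by the Proposition of \S\ref{ssaffine} (which generalizes \cite{HM}), so that $\overline{Y}^\aff=\spec k[Y]$ and $Y\hookrightarrow\overline{Y}^\aff$ is an open immersion onto a dense open. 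Since $Y^+$ is then open in $\overline{Y}^\aff$ and homogeneous, it too is strongly quasi-affine and $\overline{Y^+}^\aff=\spec k[Y^+]$. (2) $\overline{Y}^\aff$ is normal, because $Y$ is smooth and hence $k[Y]$ is integrally closed in $k(Y)$. Thus the lemma amounts to the equality $k[Y]=k[Y^+]$ of subrings of the common function field, and the inclusion $k[Y]\subseteq k[Y^+]$ is automatic.

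For the reverse inclusion it is enough to show that $\overline{Y}^\aff\smallsetminus Y^+$ has codimension $\ge 2$ in $\overline{Y}^\aff$, since a regular function on the complement of a closed subset of codimension $\ge 2$ in a normal variety extends. This complement is the union of (i) the boundary $\overline{Y}^\aff\smallsetminus Y$ of the affine closure, and (ii) the non-open $G^0$-orbits contained in $Y$. Part (i) is handled by the known geometry of the spaces $\overline{[P,P]\backslash G}^\aff$: via Drinfeld's compactification $\overline{\Bun}_P$, their boundary strata are described in \cite{BFGM} and all have codimension $\ge 2$; alternatively one reads this off Theorem \ref{affine}, the colored cone of $\overline{Y}^\aff$ containing in its relative interior every invariant valuation with center in the boundary.

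The main obstacle is part (ii): establishing that no $G^0$-orbit on $Y$ other than $Y^+$ has codimension $1$. I would reduce this to the Lagrangian Grassmannian. The $\GSp_6$-equivariant fibration $Y\to \tilde P\backslash\GSp_6=\mathrm{LG}(W)$ onto the variety of Lagrangian subspaces of the $6$-dimensional symplectic space $W=W_1\perp W_2\perp W_3$ (an orthogonal sum of $2$-dimensional symplectic spaces) has $2$-dimensional fibers, while $G^0$ acts on the base through $\Sp(W_1)\times\Sp(W_2)\times\Sp(W_3)$; so it suffices to show that the complement of the open $\Sp(W_1)\times\Sp(W_2)\times\Sp(W_3)$-orbit on $\mathrm{LG}(W)$ has codimension $\ge 2$. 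Concretely, a Lagrangian $\ell$ lies in the open orbit precisely when it is in general position with respect to the subspaces $W_i$ and $W_i\oplus W_j$ (in particular $\ell\cap W_i=0$ and $\dim(\ell\cap(W_i\oplus W_j))=1$), the exact list of genericity conditions being as in \cite{PSRtriple}, and the boundary is the finite union of the degeneracy loci where one of them fails. One then checks that, for $\dim W=6$, each such locus has codimension exactly $2$ — for instance $\{\ell:\ell\cap W_1\ne 0\}$ has dimension $1+\dim\mathrm{LG}(2,4)=1+3=4$, choosing first the line $\ell\cap W_1\subset W_1$ and then the Lagrangian plane $\ell/(\ell\cap W_1)$ in the $4$-dimensional symplectic space $(\ell\cap W_1)^{\perp}/(\ell\cap W_1)$. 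This codimension count is the crux; it is essentially the orbit analysis already present in \cite{PSRtriple}, and it is special to the case of three factors (for one or two factors the analogous loci are divisors, which is precisely why the $n\le 2$ varieties are realized as pre-flag bundles of a different form). Granting (i) and (ii), $\overline{Y}^\aff\smallsetminus Y^+$ has codimension $\ge 2$, so $k[Y^+]=k[Y]$ and $X^0=\overline{H\backslash G^0}^\aff=\overline{[\tilde P,\tilde P]\backslash\GSp_6}^\aff$, as claimed.
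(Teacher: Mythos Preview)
Your argument is correct and follows essentially the same route as the paper: the complement of the open $G^0$-orbit has codimension $\ge 2$, normality gives the equality of rings of regular functions, and hence of affine closures. The paper simply cites \cite[Lemma 1.1]{PSRtriple} directly for the codimension claim, whereas you sketch the reduction to the Lagrangian Grassmannian that underlies it.

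One simplification: your part (i) is unnecessary. You want $k[Y^+]=k[Y]$, and since $Y=[\tilde P,\tilde P]\backslash\tilde G$ is itself smooth (hence normal), Hartogs applies already on $Y$: once you know $Y\smallsetminus Y^+$ has codimension $\ge 2$ in $Y$ (your part (ii)), every regular function on $Y^+$ extends to $Y$, and you are done without ever inspecting the boundary $\overline{Y}^\aff\smallsetminus Y$. The paper's proof uses exactly this shortcut.
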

\begin{proof}
 Denote by $Y$ the affine closure of $[\tilde P,\tilde P]\backslash \tilde G$. We have an open embedding: $X^0\hookrightarrow Y$. By \cite[Lemma 1.1]{PSRtriple}, all non-open $G$-orbits have codimension at least two. Therefore, the embedding is an isomorphism.
\end{proof}
Hence, our integral for $X^0$ coincides with the Rankin-Selberg integral of Garrett. The only thing that remains to do is to compare the normalizations for the sections of Eisenstein series. From \cite[Theorem 3.1]{PSRtriple} one sees that the square of the absolute value of our integral is related to the $L$-value:
$$\frac{L(\tau_1\otimes\tau_2\otimes \tau_3, \frac{1}{2}+s) L(\tilde \tau_1\otimes\tilde\tau_2\otimes\tilde\tau_3, \frac{1}{2}-s)}{L(\tau,\operatorname{Ad},1)}.$$
(Again, for simplicity, we assume trivial central characters. Notice that the zeta factors in \cite[Theorem 3.1]{PSRtriple} disappear because of the correct normalization of the Eisenstein series!)
\end{itemize}

It is completely natural to expect the corresponding integral for $n=4$ or higher to be related to the $n$-fold tensor product $L$-function. It becomes obvious from the above example that the point of view of the spherical variety is the natural setting for such integrals, while at the same time the structure of a pre-flag bundle may not exist and, even if it exists, it has a completely different form in each case which conceals the uniformity of the construction.


\section{Smooth affine spherical varieties}\label{secsmoothaffine}

Given that we do not know how to prove Conjecture \ref{weakconjecture}, except in the cases of wavefront pre-flag bundles, it is natural to ask the purely algebro-geometric question: Which spherical varieties admit the structure of a pre-flag bundle? An answer to this question would amount to a complete classification of Rankin-Selberg integrals, in the restricted sense that ``Rankin-Selberg'' has been used here. Such an answer has been given in the special case of \emph{smooth} affine spherical varieties: These varieties automatically have the structure of a pre-flag bundle, and they have been classified by Knop and Van Steirteghem \cite{KnVS}, hence can be used to produce Eulerian integrals of automorphic forms! There seems to be little point in computing every single example in the tables of \cite{KnVS}, and my examination of most of the cases has not produced any striking new examples. However, we get some of the best-known integral constructions, as well as some new ones (which do not produce any 
interesting new $L$-functions).

\subsection{Smooth affine spherical triples}  By Theorem \ref{Lunacorollary} of Luna, every smooth affine spherical variety of $G$ (over an algebraically closed field in characteristic zero) is of the form $V\times^H G$, where $H$ is a reductive subgroup (so that $H\backslash G$ is affine) and $V$ is an $H$-module. As we have seen in Example \ref{vsexample}, vector spaces are pre-flag varieties, and therefore all smooth affine spherical varieties are pre-flag bundles. We check the details carefully:

\begin{lemma}
 Every smooth affine spherical variety admits the structure of a pre-flag bundle.\footnote{Strictly speaking, the ``affine closure'' condition is not satisfied when the fibers have one-dimensional summands under the action of $\mathcal Z(X)$; one should modify the definition of a pre-flag bundle to allow this case, but in order not to overcomplicate things we prefer not to do so. Notice that after integrating by characters of $\mathcal Z(X)$ the ``basic function'' of  $\Gm$ differs from the ``basic function'' of $\Ga$ only by a Dirichlet $L$-function, so the meromorphic properties of the integrals we are considering are not affected by whether we compactify $\Gm$ or not.}
\end{lemma}
\begin{proof}
 If $X= V\times^H G$ as above, we set $Y = (\mathcal N(H)^0\cdot H)\backslash G$. We let $\tilde X^+$ be the subvariety on which $\mathcal Z(X)$ acts freely, and take $\tilde G = G$. Clearly, $\mathcal Z(X)$ contains the connected centralizer of $H$ in $\GL(V)$ (which is a torus, since $X$ is spherical), so if $V=\oplus_i V_i$ is the decomposition into irreducible $H$-representations according to $\mathcal Z(H)^0$ then $\tilde X^+ = \prod_i (V_i\smallsetminus\{0\}) \times^H G$, and $G$ acts transitively on $\tilde X^+$. By the assumption $\mathcal Z(X)=\mathcal Z(G)^0$, $\mathcal Z(X)$ is the connected center of $\mathcal N(H)$, hence $\tilde Y:=\tilde X^+/\mathcal Z(X)$ has fibers $\mathbb PV_1\times \dots \times \mathbb PV_n$ over $Y$ and is therefore proper over $Y$.
\end{proof}

The corresponding integrals include all period integrals over reductive subgroups, as well as Rankin-Selberg integrals involving \emph{mirabolic} Eisenstein series (i.e.\ those induced from the mirabolic subgroup of $\GL_n$).

In \cite{KnVS}, Knop and Van Steirteghem classify all smooth affine spherical \emph{triples} $(\gf, \hh, V)$, which amounts to a classification of smooth affine spherical varieties up to coverings, central tori and $\Gm$-fibrations. We recall their definitions:

\begin{definition}
\begin{enumerate}
\item Let $\hh\subset\gf$ be semisimple Lie algebras and let $V$ be a representation of $\hh$. For $\ss$, a Cartan subalgebra of the centralizer $c_\gf (\hh)$ of $\hh$, put $\bar\hh := \hh \oplus \ss$, a maximal central extension of $\hh$ in $\gf$. Let $\zz$ be a Cartan subalgebra of $\mathfrak{gl}(V)^\hh$ (the centralizer of $\hh$ in $\mathfrak{gl}(V)$). We call $(\gf, \hh, V)$ a \emph{spherical triple} if there exists a Borel subalgebra $\bb$ of $\gf$ and a vector $v\in V$ such that 
\begin{enumerate}
\item 
$\bb + \bar\hh = \gf$ and
\item $[(\bb \cap \bar\hh) + \zz]v = V$ where $\ss$ acts via any homomorphism $\ss\to\zz$ on $V$.
\end{enumerate}
\item Two triples $(\gf_i , \hh_i , V_i)$, $i = 1, 2$, are \emph{isomorphic} if there exist isomorphisms of Lie algebras resp.\ vector spaces $\alpha: \gf_1\to\gf_2$ and $\beta: V_1\to V_2$ such that 
\begin{enumerate}
\item
$\alpha(\hh_1 ) = \hh_2$ 
\item
$\beta(\xi v) = \alpha(\xi)\beta(v)$ for all $\xi\in \hh_1$ and $v\in V_1$.
\end{enumerate}
\item 
Triples of the form $(\gf_1\oplus\gf_2, \hh_1\oplus\hh_2, V_1\oplus V_2)$ with $(g_i , h_i , V_i ) \ne (0, 0, 0)$ are called \emph{decomposable}. 
\item 
Triples of the form $(\kk, \kk, 0)$ and $(0, 0, V )$ are said to be \emph{trivial}. A pair $(\gf, \hh)$ of semisimple Lie algebras is called spherical if $(\gf, \hh, 0)$ is a spherical triple. 
\item A spherical triple (or pair) is \emph{primitive} if it is non-trivial and indecomposable.
\end{enumerate}
\end{definition}

Clearly, every smooth affine spherical variety gives rise to a spherical triple. Conversely, each spherical triple is obtained from a (not necessarily unique) smooth affine spherical variety, as follows by an a posteriori inspection of all spherical triples. (The non-obvious step here is that the $\hh$-module $V$ integrates to an $H$-module, where $H$ is the corresponding subgroup.) 

The classification of all primitive spherical triples is given in \cite{KnVS}, Tables 1, 2, 4 and 5, modulo the inference rules described in Table 3. The diagrams are read in the following way: The nodes in the first row correspond to the simple direct summands $\gf_i$ of $\gf$, the ones in the second row to the simple direct summands $\hh_i$ of $\hh$ and the  ones in the third row to the simple direct summands $V_i$ of $V$. If $(\gf,\hh)$ contains a direct summand of the form $(\hh_1,\hh_1)$ then the $\hh_1$ summand is omitted from the first row There is an edge between $\gf_i$ and $\hh_j$ if $\hh_j\hookrightarrow \gf\twoheadrightarrow\gf_i$ is non-zero and an edge between $h_j$ and $V_k$ if $V_k$ is a non-trivial $h_j$-module. The edges are labeled to describe the inclusion of $\hh$ in $\gf$, resp.\ the action of $\hh$ on $V$; the labels are omitted when those are the ``natural'' ones. 

We number the cases appearing in the list of Knop and Van Steirteghem as follows: First, according to the table on which they appear (Tables 1, 2, 4, 5 in \cite{KnVS}); and for each table, numbered from left to right, top to bottom. 

\subsection{Eulerian integrals arising from smooth affine varieties}  

In what follows we will discuss a sample of the global integrals obtained from varieties in the list of Knop and Van Steirteghem. At this point it is more convenient not to normalize the action of $G$ unitarily. We allow ourselves to choose the spherical variety corresponding to a given spherical triple as is most convenient, and in fact we sometimes replace semisimple groups by reductive ones. Of course, the classification in \cite{KnVS} is over an algebraically closed field, which leaves a lot of freedom for choosing the precise form of the spherical variety over $k$, even when $G$ is split. In the discussion which follows we will always take both the group and generic stabilizer to be split. Many of the varieties of Knop and Van Steirteghem have zero cuspidal contribution (i.e.\ the integral (\ref{intconj1}) is zero for every cusp form) or are not multiplicity-free. Still, this list contains some of the best-known examples of integral representations of $L$-functions. It contains also some new ones.

In subsection \S\ref{ssrelatedtolfunction} we explained what it means for a period integral $\mathcal P$ to be ``related to'' an $L$-value, namely by considering the value of $\mathcal P|_\pi\cdot \mathcal P|_{\bar \pi}$, assuming that $\pi$ is an abstract unitary representation of an adelic group, embedded unitarily into the space of cuspidal automorphic forms for that group. For the examples that we are about to see, we will adopt a language that describes the value of $\mathcal P|_\pi$ itself, divided by the value of a period integral that does not depend on a continuous parameter, such as the Whittaker period. For example, for the Hecke integral (\ref{hecke}) we would say that it is related to $L(\pi,s+\frac{1}{2})$ with respect to Whittaker normalization, while for the Godement-Jacquet integral (\ref{GodementJacquet}) we would say that it is related to $L(\pi, s-\frac{1}{2}(n-1))$ with respect to the ``inner product'' period on $\pi\otimes\tilde\pi$.

\subsubsection{\textsc{Table 1}}

In this table the group $H$ is equal to $G$, i.e. the data consists of a group and a spherical representation of it. This table contains the following interesting integrals (numbered according to their occurence in the tables of Knop and Van Steirteghem):

\begin{description}
 \item[1. \textbf{The integrals of Godement and Jacquet}.]
 Here the group is $\GL_n\times \GL_m$ with the tensor product representation (i.e.\ on $\Mat_{n\times m}$). It is easy to see that if $m\ne n$ then the stabilizer is parabolically induced, hence the only interesting case (as far as cusp forms are concerned) is $m=n$. In this case, our integral (\ref{intconj1}) \emph{is that of Godement and Jacquet}:
$$ \int_{Z^\diag(\Ak)\GL_n^\diag(k)\backslash \GL_n(\Ak)\times\GL_n(\Ak)} \phi_1(g_1)\phi_2(g_2) \Phi(g_1^{-1}g_2) \cdot $$ $$\cdot |\det(g_1^{-1}g_2)|^{s} d(g_1,g_2).$$


 \item[15. \textbf{Two new integrals.}] (Here there is a choice between the first and the last fundamental representation of $\GL_n$. It can easily be seen that they amount to the same integral, so we will consider only $\omega_1$.)

The group is $\GL_m\times \GL_n$ and the representation is the direct sum $\Mat_{m\times n}$ with the standard representation for $\GL_n$. If $m\ne n, n-1$ then we can easily see that the stabilizer is parabolically induced. Hence there are two interesting cases:

\begin{enumerate}
\item[(i)] $m=n$. We let $\phi_1 \in \pi_1,\phi_2\in \pi_2$ be two cusp forms on $\GL_n$. Then the integral is:
$$ \int_{P_n^\diag(k)\backslash \GL_n(\Ak)\times\GL_n(\Ak)} \phi_1(g_1)\phi_2(g_2) \Phi(g_1^{-1}g_2) \Phi'([0, \dots, 0, 1]\cdot g_1)\cdot $$ \nopagebreak $$\cdot |\det(g_1^{-1}g_2)|^{s_1} |\det(g_1)|^{s_2} dg_1 dg_2.$$
Here $\Phi$ is a Schwartz function on $\Mat_n(\Ak)$ and $\Phi'$ is a Schwartz function on $\Ak^n$.

\begin{theorem}
The above integral is Eulerian and with respect to Whittaker normalization is related to the $L$-value:
\begin{equation}L(\pi_1\otimes\pi_2, s_2)\cdot L(\pi_2, s_1-\frac{1}{2}(n-1)).
\end{equation}
\end{theorem}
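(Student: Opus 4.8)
The plan is to identify the variety explicitly, unfold the theta integral to a classical Rankin--Selberg integral, and then factor it as an Euler product whose unramified local factors are evaluated by the standard Jacquet--Piatetski-Shapiro--Shalika and Godement--Jacquet calculations.

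\textbf{Identification.} Take $X=\Mat_n\oplus\AA^n$ with $\GL_n\times\GL_n$ acting by $(g_1,g_2)\cdot(M,v)=(g_1^{-1}Mg_2,\,vg_1)$; the open orbit is $X^+=\{(M,v):\det M\neq 0,\ v\neq 0\}$, and the stabilizer of $(I,e_n)$ is the diagonally embedded mirabolic $P_n^\diag$, so $X^+=P_n^\diag\backslash(\GL_n\times\GL_n)$. This is a smooth affine spherical variety (hence a pre-flag bundle) with trivial arithmetic multiplicity, with $\mathcal Z(X)=\mathcal Z(G)^0\cong\Gm\times\Gm$, and --- $X$ being smooth --- with basic function $\mathbf 1_{X(\mathfrak o_v)}$ at each $v\notin S_0$. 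The open-orbit unfolding of the pairing of Conjecture \ref{weakconjecture}, combined with the $\mathcal Z(X)$-Mellin transform in the two parameters $(s_1,s_2)$, is then exactly the integral in the statement, convergent for $\Re s_1,\Re s_2$ large.

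\textbf{Unfolding.} I would run the argument of \S\ref{ssRS}, with the mirabolic Eisenstein series of $\GL_n$ in place of a parabolic one (permissible since it has meromorphic continuation by Poisson summation on $\AA^n$). Folding the sum over $X^+(k)=\GL_n(k)\times(k^n\smallsetminus\{0\})$: the $v$-sum produces the mirabolic Eisenstein series $E(g_1,s_2,\Phi')=|\det g_1|^{s_2}\sum_{0\neq v\in k^n}\Phi'(vg_1)$ on $\GL_n$, while the sum over $M\in\GL_n(k)$ frees $g_2$ to range over all of $\GL_n(\Ak)$, giving
\[
I=\int_{\GL_n(k)\backslash\GL_n(\Ak)}\phi_1(g_1)\,E(g_1,s_2,\Phi')\left(\int_{\GL_n(\Ak)}\phi_2(g_1g)\,\Phi(g)\,|\det g|^{s_1}\,dg\right)dg_1 .
\]
Unfolding $E$ and inserting the Whittaker expansions of $\phi_1$ and $\phi_2$, and using the left $U_n(\Ak)$-invariance of $g\mapsto\Phi'(e_ng)$, this turns into the Euler-factorizable integral
\[
I=\int_{U_n(\Ak)\backslash\GL_n(\Ak)}\int_{\GL_n(\Ak)}W_{\phi_1}(g_1)\,W_{\phi_2}(g_2)\,\Phi(g_1^{-1}g_2)\,\Phi'(e_ng_1)\,|\det g_1|^{s_2}\,|\det(g_1^{-1}g_2)|^{s_1}\,dg_2\,dg_1 ,
\]
where $W_{\phi_1},W_{\phi_2}$ are the global Whittaker functions of $\phi_1,\phi_2$ relative to a fixed additive character $\psi$, resp.\ $\psi^{-1}$.

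\textbf{Euler product and local factors.} For decomposable data this is a product $\prod_v I_v$ (the general case by bilinearity). The crucial observation is that, for fixed $g$,
\[
J(g):=\int_{U_n(\Ak)\backslash\GL_n(\Ak)}W_{\phi_1}(g_1)\,W_{\phi_2}(g_1g)\,\Phi'(e_ng_1)\,|\det g_1|^{s_2}\,dg_1=\big\langle\,\pi_2(g)W_{\phi_2},\ \mathcal J\,\big\rangle
\]
is, as a function of $g$, a matrix coefficient of $\pi_2$, with $\mathcal J$ the Jacquet--Piatetski-Shapiro--Shalika $\GL_n\times\GL_n$ functional $W\mapsto\int_{U_n(\Ak)\backslash\GL_n(\Ak)}W_{\phi_1}(g_1)W(g_1)\Phi'(e_ng_1)|\det g_1|^{s_2}\,dg_1$ on the Whittaker model of $\pi_2$; hence $I=\int_{\GL_n(\Ak)}\Phi(g)|\det g|^{s_1}J(g)\,dg$ is a Godement--Jacquet integral for $\pi_2$. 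At almost every place the standard unramified $\GL_n\times\GL_n$ computation gives $\mathcal J_v(W_{2,v})=J_v(1)=L_v(\pi_1\times\pi_2,s_2)$, so $J_v$ equals $L_v(\pi_1\times\pi_2,s_2)$ times the normalized spherical matrix coefficient of $\pi_{2,v}$, and then the unramified Godement--Jacquet computation (with the shift exhibited in (\ref{GodementJacquet})) gives $I_v=L_v(\pi_1\times\pi_2,s_2)\,L_v(\pi_2,s_1-\tfrac{1}{2}(n-1))$. Multiplying over all $v$, with the remaining finitely many $I_v$ playing the role of the local forms $\Lambda_v$ of \S\ref{ssrelatedtolfunction}, yields the assertion; Eulerianity is the factorization, and meromorphic continuation follows from that of the mirabolic Eisenstein series together with that of the Godement--Jacquet integral.

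\textbf{Main obstacle.} The two delicate points are the analytic justification of the unfolding --- convergence for $\Re s_i\gg 0$, interchange of the various sums and integrals, and the domain/equivariance bookkeeping that makes the two Whittaker expansions collapse correctly --- and, the genuine crux, the verification that $J(g)$ is an \emph{honest} matrix coefficient of $\pi_2$ (i.e.\ that $\mathcal J$ extends to a continuous functional against which $\pi_2(g)W_{\phi_2}$ is paired). Once this is in hand the theorem reduces to the two classical unramified integral evaluations, plus the check that they decouple into a clean product with no spurious zeta factors.
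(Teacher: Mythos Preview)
Your proposal is correct and reaches the same conclusion, but the order in which you extract the two $L$-factors at the unramified places is reversed compared to the paper's proof.

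The paper proceeds as follows: after unfolding to the same Whittaker integral, it invokes the Hecke-algebra identity from Godement--Jacquet \cite[Lemma 6.10]{GJ},
\[
\Phi_v^0(x)\,|\det x|^{s_1}=\widecheck\Sat_{G_2}\!\left(\frac{1}{\wedge^{\top}\bigl(1-q_v^{-s_1+\frac{1}{2}(n-1)}\cdot\mathrm{std}\bigr)}\right)\star 1_{\GL_n(\mathfrak o_v)},
\]
which immediately pulls out $L_v(\pi_2,s_1-\tfrac{1}{2}(n-1))$ and replaces $\Phi_v$ by $1_{\GL_n(\mathfrak o_v)}$; the $g_2$-integral then collapses to $g_2\in g_1 K_v$, leaving exactly the classical Rankin--Selberg local integral in the single variable $g_1$, which gives $L_v(\pi_1\times\pi_2,s_2)$. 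You instead first integrate in $g_1$ to produce $J_v(h)$, observe that for unramified data the JPSS functional $\mathcal J_v$ is $K_v$-invariant and hence a genuine (spherical) vector of $\tilde\pi_{2,v}$, so $J_v(h)=L_v(\pi_1\times\pi_2,s_2)\cdot\varphi_{\pi_{2,v}}(h)$, and then apply the Godement--Jacquet unramified computation to the remaining $h$-integral. Both routes are clean; the paper's has the advantage of citing a ready-made identity and never needing to name the functional $\mathcal J_v$ or verify its smoothness, while yours makes the ``product of two known zeta integrals'' structure more transparent. One small bookkeeping point: after the change of variables $h=g_1^{-1}g_2$ the quotient in your second displayed unfolded integral is really by the \emph{diagonal} $U_n(\Ak)$ (equivalently, keep the variables $(g_1,h)\in (U_n\backslash\GL_n)\times\GL_n$ as you do later); the paper's displayed domain $(U_n\backslash\GL_n)^2$ is shorthand for the same thing.
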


\begin{proof}
It follows from the standard ``unfolding'' technique that the above integral, in the domain of convergence, is equal to:
$$\int_{(U_n(\Ad)\backslash \GL_n(\Ad))^2} W_1 (g_1) W'_2(g_2) \Phi(g_1^{-1}g_2) \Phi'([0, \dots, 0, 1]\cdot g_1)\cdot $$ \nopagebreak $$\cdot |\det(g_1^{-1}g_2)|^{s_1} |\det(g_1)|^{s_2}  dg_1 dg_2$$
where $W_1(g)=\int_{U_n(k)\backslash U_n(\Ad)} \phi_1(ug) \psi(u) du$
and $W'_2$ the same with $\phi_1$ replaced by $\phi_2$ and $\psi$ replaced by $\psi^{-1}$. 

The last integral is (for ``factorizable data'') a product of local factors:
$$\int_{(U_n(k_v)\backslash \GL_n(k_v))^2} W_{1,v} (g_1) W'_{2,v}(g_2) \Phi_v(g_1^{-1}g_2) \Phi_v'([0, \dots, 0, 1]\cdot g_1)\cdot $$ \nopagebreak $$\cdot |\det(g_1^{-1}g_2)|^{s_1} |\det(g_1)|^{s_2}  dg_1 dg_2.$$

Assume that $\Phi_v=\Phi_v^0$, the basic function of $\mathcal S(\Mat_n(k_v))$. Considering the action of the spherical Hecke algebra of $G_2$ (=the second copy of $\GL_n$) on $\mathcal S(\Mat_n(k_v))$, the work of Godement and Jacquet \cite[Lemma 6.10]{GJ} proves:
\begin{equation}
 \Phi_v^0(x) |\det(x)|^{s_1} = \widecheck\Sat_{G_2} \left(\frac{1}{\wedge^\top \left(1- q_v^{-s_1+\frac{1}{2}(n-1)}\cdot\operatorname{std}\right)}\right) \star 1_{\GL_n(\mathfrak o)}
\end{equation}
Therefore for unramified data the last integral is equal to:

$$L(\pi_2, s_1-\frac{1}{2}(n-1)) \cdot \int_{(U_n(k_v)\backslash \GL_n(k_v))^2} W_{1,v} (g_1) W'_{2,v}(g_2) \cdot $$ $$\cdot1_{\GL_n(\mathfrak o_v)}(g_1^{-1}g_2) \Phi_v'([0, \dots, 0, 1]\cdot g_1) |\det(g_1^{-1}g_2)|^{s_1} |\det(g_1)|^{s_2}  dg_1 dg_2 =$$
$$= L(\pi_2, s_1-\frac{1}{2}(n-1)) \cdot $$ $$\cdot\int_{(U_n(k_v)\backslash \GL_n(k_v))} W_{1,v} (g) W'_{2,v}(g)  \Phi_v'([0, \dots, 0, 1]\cdot g) |\det(g)|^{s_2}  dg.$$

The latter is the classical Rankin-Selberg integral, which with respect to Whittaker normalization is related to $L(\pi_1\otimes\pi_2, s_2)$ (see, for instance, \cite{Co}).
\end{proof}

\item[(ii)] $m=n-1$. Notice that if $V$ denotes the standard representation of $\GL_n$ then the space $\Mat_{(n-1)\times n}\oplus V$ can be identified under the $G_1\times G_2:=\GL_{n-1}\times \GL_n$-action with the space $X=\Mat_n$, where $g\in G_1$ acts as $\left(\begin{array}{cc} g^{-1} \\ & 1\end{array}\right)$ on the left Let $\phi_1\in \pi_1$ be a cusp form on $\GL_{n-1}$ and $\phi_2\in \pi_2$ a cusp form in $\GL_n$. Then the integral is:
$$ \int_{\GL_n^\diag(k)\backslash \GL_{n+1}(\Ak)\times\GL_n(\Ak)} \phi_1(g_1)\phi_2(g_2) \cdot $$ $$ \cdot \Phi\left(\left(\begin{array}{cc} g_1^{-1} \\ & 1\end{array}\right) g_2\right) \left|\frac{\det(g_2)}{\det(g_1)}\right|^{s_1} |\det(g_1)|^{s_2} dg_1 dg_2$$
where $\Phi\in\mathcal S(\Mat_n(\Ad))$.

As before, one can prove:
\begin{theorem}
The above integral is Eulerian and with respect to Whittaker normalization related to the $L$-value:
\begin{equation}L(\pi_1\otimes\pi_2, s_2+\frac{1}{2})\cdot L(\pi_2, s_1-\frac{1}{2}n).
\end{equation}
\end{theorem}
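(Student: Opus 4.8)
The plan is to follow the three-step pattern of the previous theorem: unfold to Whittaker functions, apply a Godement--Jacquet local identity, and recognize what is left as a classical Rankin--Selberg integral. \emph{Step 1 (unfolding).} Writing the cusp form $\phi_1$ on $\GL_{n-1}$ and the cusp form $\phi_2$ on $\GL_n$ in terms of their Whittaker coefficients along $U_{n-1}$ and $U_n$, and discarding all but the generic Fourier coefficients by cuspidality --- this is the same ``negligible orbit'' mechanism as in \S\ref{ssRS}, since the non-open $G$-orbits on $X=\Mat_n$ are parabolically induced (Proposition~\ref{parind}) --- I expect to show that in the domain of convergence the global integral equals
\[
\int_{(U_{n-1}(\Ad)\backslash\GL_{n-1}(\Ad))\times(U_n(\Ad)\backslash\GL_n(\Ad))} W_1(g_1)\,W_2'(g_2)\,\Phi\!\left(\begin{pmatrix}g_1^{-1}&\\&1\end{pmatrix}g_2\right)\left|\frac{\det g_2}{\det g_1}\right|^{s_1}\!|\det g_1|^{s_2}\,dg_1\,dg_2,
\]
where $W_1$ is the $\psi$-Whittaker function attached to $\phi_1$ and $W_2'$ the $\psi^{-1}$-Whittaker function attached to $\phi_2$; for factorizable data this factors as a product of local integrals.

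\emph{Step 2 (Godement--Jacquet).} At a place $v$ where all the data are unramified and $\Phi_v=\Phi^0_v$ is the basic function of $\mathcal S(\Mat_n(k_v))$ --- which, $\Mat_n$ being smooth, is just $1_{\Mat_n(\mathfrak o_v)}$ --- I would invoke \cite[Lemma 6.10]{GJ}: under the action of the spherical Hecke algebra of the second factor $\GL_n$ on $\mathcal S(\Mat_n(k_v))$, the function $x\mapsto\Phi^0_v(x)\,|\det x|^{s_1}$, restricted to $\GL_n(k_v)$, equals $\widecheck\Sat_{\GL_n}\!\left(\tfrac{1}{\wedge^{\operatorname{top}}(1-q_v^{-s_1+\frac n2}\cdot\operatorname{std})}\right)\star 1_{\GL_n(\mathfrak o_v)}$, the exact exponent being pinned down by the normalization of the measure on $X^+$. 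Since $W_2'$ is a Hecke eigenvector, moving this operator onto it extracts the scalar $L_v(\pi_{2,v},s_1-\tfrac n2)$ and replaces $\Phi^0_v$ by $1_{\GL_n(\mathfrak o_v)}\!\left(\begin{pmatrix}g_1^{-1}&\\&1\end{pmatrix}g_2\right)$.

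\emph{Step 3 (reduction to the JPSS integral).} Using right $\GL_n(\mathfrak o_v)$-invariance of the surviving data together with the Iwasawa decomposition, this indicator forces $g_2\in U_n(k_v)\begin{pmatrix}g_1&\\&1\end{pmatrix}\GL_n(\mathfrak o_v)$, so the $g_2$-integration collapses up to a fixed volume constant, and what remains is precisely the local Jacquet--Piatetski-Shapiro--Shalika integral for $\GL_{n-1}\times\GL_n$, with the $\rho$-shift absorbed into $|\det g_1|^{s_2}$. Its unramified value is $L_v(\pi_{1,v}\times\pi_{2,v},s_2+\tfrac12)$ times the corresponding Whittaker-normalized local factor (see e.g.\ \cite{Co}). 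Multiplying over all $v$ and absorbing the finitely many ramified places into the local functionals gives the asserted global identity, the denominator $L(\pi,\operatorname{Ad},1)$ entering through the Whittaker normalization of \S\ref{ssrelatedtolfunction}.

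Since everything reduces to known integrals and known unramified computations, the conceptual content is slight; the step I expect to be the main obstacle is the bookkeeping in Steps~2 and~3 --- confirming that the Godement--Jacquet exponent is $\tfrac n2$ rather than $\tfrac{n-1}{2}$ once the unitary normalization of the $G$-action is undone, and tracking the $\rho$-shift and the volume constant that appear when the $g_2$-integration collapses to the JPSS integral. Verifying the unfolding of Step~1 rigorously (that no non-open orbit contributes against a cusp form) is routine given the corresponding argument for the case $m=n$ and Proposition~\ref{parind}.
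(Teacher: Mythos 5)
Your three-step plan --- unfold to Whittaker functions, apply Godement--Jacquet Lemma 6.10, and recognize the remaining $g_1$-integral as the JPSS $\GL_{n-1}\times\GL_n$ integral --- is exactly what the paper intends by ``As before, one can prove,'' and Steps~1 and~3 are sound. After the change of variable $g_2\mapsto\diag(g_1,1)g_2$ the determinant factor simplifies to $|\det g_2|^{s_1}$, the stabilizer acts only on $g_1$, and the $g_2$-integral decouples from the JPSS integral in $g_1$, giving the factor $L(\pi_1\otimes\pi_2,s_2+\tfrac12)$ with the usual $\tfrac12$-shift of the unequal-rank Rankin--Selberg integral.

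The issue is your Step~2. You assert the Godement--Jacquet exponent should be $s_1-\tfrac n2$, pinned down ``once the unitary normalization of the $G$-action is undone.'' But the paper explicitly declines to unitarily normalize the $G$-action in this section (``At this point it is more convenient not to normalize the action of $G$ unitarily''), and GJ's Lemma 6.10 is a purely local identity on $\GL_n(k_v)\subset\Mat_n(k_v)$ that does not see the first factor $\GL_{n-1}$ at all: it gives $\Phi^0_v(x)|\det x|^{s_1}=\widecheck\Sat_{G_2}\bigl(\wedge^{\operatorname{top}}(1-q_v^{-s_1+\frac{n-1}{2}}\operatorname{std})^{-1}\bigr)\star 1_{\GL_n(\mathfrak o_v)}(x)$, the same $\tfrac{n-1}{2}$-shift the paper records in case (i) (and in Table~2, item~3, which yields exactly $L(\pi_2,s_1-\tfrac{n-1}{2})$). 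Moving that Hecke operator onto $W'_2(\diag(g_1,1)\,\cdot\,)$ --- which remains a right-$K$ eigenvector regardless of the $\diag(g_1,1)$ in front --- extracts $L_v(\pi_{2,v},s_1-\tfrac{n-1}{2})$, not $L_v(\pi_{2,v},s_1-\tfrac n2)$. So either the theorem statement contains a typo (which your computation should have surfaced) or there is an extra $q^{1/2}$-factor somewhere that you have not located; invoking ``undoing unitary normalization'' does not account for it, since there was no such normalization to undo. You should redo the local unramified computation carefully and either confirm the $\tfrac{n-1}{2}$-shift or identify explicitly where the discrepancy originates, rather than fitting the bookkeeping to match the displayed constant.
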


\end{enumerate}

\end{description}

\subsubsection{\textsc{Table 2}}

In this table $H$ is smaller than $G$ and the representation $V$ of $H$ is non-trivial. This table contains the following interesting integrals:

\begin{description}
\item[1. \textbf{The Bump-Friedberg integral.}] The group is $\GL_{m+n}$ where $m=n\text{ or }n+1$, the subgroup $H$ is $\GL_m\times \GL_n$ and the representation is the standard representation of the second factor. This is the integral examined in \cite{BF}:
$$ \int_{\GL_m(k)\times \GL_n(k)\backslash \GL_m(\Ak)\times\GL_n(\Ak)} \phi\left(\begin{array}{cc} g_1 \\ & g_2\end{array}\right) \left|\frac{\det (g_1)}{\det(g_2)}\right|^{s_1} \cdot $$ $$ \cdot \Phi([0, \cdots, 0, 1] \cdot g_2) |\det g_2|^{s_2} dg_1 dg_2$$
 It is related with respect to Whittaker normalization to the $L$-value:
$$L(\pi,s_1+\frac{1}{2})L(\pi,\wedge^2, s_2).$$

\item[3. \textbf{A new integral.}] The group is $\GL_{m+1}\times \GL_n$, and $G'=\GL_m\times \GL_n$ with the tensor product of the standard representations (i.e.\ on $\Mat_{m\times n}$). The only interesting case is $m=n$. If $n>m$ then the stabilizer is parabolically induced, and when $m>n$ it unfolds to a parabolically induced model.

If $m=n$ we get:
$$ \int_{\GL^\diag(k)\backslash \GL_n(\Ak)\times\GL_n(\Ak)} \phi_1\left(\begin{array}{cc} g_1 \\ & 1\end{array}\right)\phi_2(g_2) \Phi(g_1^{-1} g_2) \cdot $$ $$ \cdot \left|\frac{\det(g_2)}{\det(g_1)}\right|^{s_1} |\det(g_1)|^{s_2} d(g_1,g_2).$$

As before, one can prove:

\begin{theorem}
The above integral is Eulerian and with respect to Whittaker normalization related to the $L$-value:
\begin{equation}L(\pi_1\otimes\pi_2, s_2+\frac{1}{2})\cdot L(\pi_2, s_1-\frac{1}{2}(n-1)).
\end{equation}
\end{theorem}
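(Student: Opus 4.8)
The plan is to mimic, almost verbatim, the proof of the theorem in case~15(i) of Table~1; the only structural change is that the $\GL_{n+1}$-cusp form $\phi_1$ now carries the ``extra variable'' that was previously supplied by the auxiliary vector Schwartz function $\Phi'$. Write $I$ for the integral in the statement.

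First I would substitute $g_2=g_1h$, so that $\GL_n^\diag(k)$ acts on the $g_1$-factor alone and
\begin{equation*}
I=\int_{\GL_n(k)\backslash\GL_n(\Ak)}\int_{\GL_n(\Ak)}\phi_1\!\begin{pmatrix} g_1 & \\ & 1\end{pmatrix}\phi_2(g_1h)\,\Phi(h)\,|\det h|^{s_1}|\det g_1|^{s_2}\,dh\,dg_1.
\end{equation*}
Then I would run the standard Rankin--Selberg/Whittaker unfolding for the $\GL_{n+1}\times\GL_n$ pair: expand $\phi_1$ restricted to the mirabolic of $\GL_{n+1}$ through its Kirillov model, absorb the resulting sum over $U_n(k)\backslash\GL_n(k)$ into the $g_1$-integration, and expand $\phi_2$ via its Fourier--Whittaker expansion. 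Since $\phi_1$ is cuspidal, everything converges absolutely in a cone of $(s_1,s_2)$ and the non-generic intermediate terms drop out, so that in that cone $I$ becomes an Euler product whose factor at a place $v$ is
\begin{equation*}
\Psi_v(s_1,s_2)=\int_{U_n(k_v)\backslash\GL_n(k_v)}\!\int_{\GL_n(k_v)} W_{1,v}\!\begin{pmatrix} g_1 & \\ & 1\end{pmatrix}W_{2,v}(g_2)\,\Phi_v(g_1^{-1}g_2)\,|\det(g_1^{-1}g_2)|^{s_1}|\det g_1|^{s_2}\,dg_1\,dg_2,
\end{equation*}
with $W_{1,v}$, $W_{2,v}$ the local Whittaker functions (with respect to compatible additive characters, and with $W_{i,v}(1)=1$ at almost all $v$). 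This is exactly the local integral of case~15(i) with $\Phi'_v([0,\dots,0,1]g_1)$ replaced by $g_1\mapsto W_{1,v}\!\left(\begin{smallmatrix} g_1 & \\ & 1\end{smallmatrix}\right)$; in particular the integrand is left $U_n(k_v)$-invariant in $g_1$, since the contributions of $W_{1,v}$ and of $\int_{\GL_n(k_v)}W_{2,v}(g_2)\Phi_v(g_1^{-1}g_2)(\dots)\,dg_2$ transform by inverse characters.

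At each $v$ outside a finite set $S$, take $\Phi_v=\Phi_v^0$, the basic function of $\mathcal S(\Mat_n(k_v))$. The Godement--Jacquet identity \cite[Lemma 6.10]{GJ} recalled above expresses $\Phi_v^0(x)|\det x|^{s_1}$, acting on the second ($\phi_2$-) factor through the spherical Hecke algebra, as $\widecheck\Sat_{G_2}\!\left(\frac{1}{\wedge^{\top}(1-q_v^{-s_1+\frac{1}{2}(n-1)}\operatorname{std})}\right)\star 1_{\GL_n(\mathfrak o_v)}$; since $W_{2,v}$ is the spherical Whittaker function of an unramified representation, the corresponding Hecke eigenvalue is $L_v(\pi_{2,v},s_1-\tfrac12(n-1))$, and pulling it out replaces $\Phi_v^0$ by $1_{\GL_n(\mathfrak o_v)}$. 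The condition $g_1^{-1}g_2\in\GL_n(\mathfrak o_v)$ then collapses the $g_2$-integral (using right $\GL_n(\mathfrak o_v)$-invariance of $W_{2,v}$), leaving the local Jacquet--Piatetski-Shapiro--Shalika integral $\int_{U_n(k_v)\backslash\GL_n(k_v)}W_{1,v}\!\left(\begin{smallmatrix} g & \\ & 1\end{smallmatrix}\right)W_{2,v}(g)|\det g|^{s_2}\,dg$, whose unramified value is $L_v(\pi_{1,v}\times\pi_{2,v},s_2+\tfrac12)$ — the shift by $\tfrac12$ being the usual passage between the ``vector'' and the ``$\GL_{n+1}\times\GL_n$'' normalizations of this integral. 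Taking the product over $v\notin S$ gives $L^S(\pi_1\otimes\pi_2,s_2+\tfrac12)\,L^S(\pi_2,s_1-\tfrac12(n-1))$, with the finitely many remaining $\Psi_v$ ($v\in S$) depending only on local data; this is precisely the assertion that $I$ is Eulerian and related, with respect to Whittaker normalization, to the stated $L$-value.

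The step I expect to be the main obstacle is justifying the unfolding: one must verify that the coupling $\Phi(g_1^{-1}g_2)$ between the two $\GL_n$-variables does not obstruct the stagewise Fourier expansions of $\phi_1$ and $\phi_2$, that the non-generic contributions really do vanish by cuspidality (the ``negligible orbit'' mechanism of \S\ref{ssRS}), and that there is a common cone of $(s_1,s_2)$ in which every interchange of sum and integral and the Eulerian factorization are legitimate. Granting this, the local computations are routine given \cite[Lemma 6.10]{GJ} and the standard unramified JPSS computation, and the only bookkeeping needed is tracking the normalization shift $s_2\mapsto s_2+\tfrac12$.
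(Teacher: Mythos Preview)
Your proof is correct and is exactly the argument the paper has in mind: the paper's own proof is literally the phrase ``As before, one can prove'', pointing back to case~15(i), and you have carried out that analogy in full detail. The only structural change from 15(i) --- that the role of the auxiliary Schwartz function $\Phi'$ on $\Ak^n$ is now played by the restriction $g_1\mapsto \phi_1\left(\begin{smallmatrix} g_1 & \\ & 1\end{smallmatrix}\right)$, so that after the Godement--Jacquet step one is left with the $\GL_{n+1}\times\GL_n$ JPSS integral rather than the $\GL_n\times\GL_n$ Rankin--Selberg integral --- is precisely what you identified, and it accounts both for the absence of a Schwartz factor in the collapsed integral and for the shift $s_2\mapsto s_2+\tfrac12$.
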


\item[5. \textbf{The classical Rankin-Selberg integral.}] The group is $\GL_n\times \GL_n$ and the subgroup $G'$ is $\GL_n^\diag$ with the standard representation. This is the classical Rankin-Selberg integral:
$$ \int_{\GL_n(k)\backslash\GL_n(\Ak)} \phi_1(g)\phi_2(g) \Phi([0, \cdots , 0, 1]\cdot g) |\det g|^s dg.$$

It is related with respect to Whittaker normalization to the $L$-value (cf.\ \cite{Co}):
$$L(\pi_1\otimes\pi_2, s).$$

\end{description}












\subsubsection{\textsc{Tables 4 and 5}}

Here the representation $V$ is trivial, hence we get period integrals over reductive algebraic subgroups (\S \ref{ssperiods}). All known cases of multiplicity-free period integrals are contained in these tables.


 




\section{A remark on a relative trace formula}\label{secRTF}

At this point we drop our assumptions on the group $G$, in order to discuss non-split examples. We will assume the existence of Schwartz spaces with similar properties in this setting, in order to give a conceptual explanation to the phenomenon of ``weight factors'' in a relative trace formula.

The relative trace formula is a method which was devised by Jacquet and his coauthors to study period integrals of automorphic forms. In its most simplistic form, it can be described as follows: Let $H_1$ and $H_2$ be two reductive spherical subgroups of $G$ (a reductive group defined over a global field $k$) and let $f\in C_c^\infty(G(\Ad))$. Then one builds the usual kernel function: $K_f(x,y)=\sum_{\gamma\in G(k)} f(x^{-1}\gamma y)$ for the action of $f$ on the space of automorphic functions and (ignoring analytic difficulties) defines the functional:
\begin{equation}\label{simpleRTF}
\RTF_{H_1,H_2}^G(f) = \int_{H_1(k)\backslash H_1(\Ad)} \int_{H_2(k)\backslash H_2(\Ad)} K_f(h_1, h_2) dh_1 dh_2.
\end{equation}
The functional can be decomposed in two ways, one geometric and one spectral, and the spectral expansion involves period integrals of automorphic forms. By comparing two such RTFs (i.e.\ made with different choices of $H_1, H_2$, maybe even different groups $G$) one can deduce properties of those period integrals, such as that their non-vanishing characterizes certain functorial lifts.

The above presentation is too simplistic for several reasons: First, the correct functional has something to do with the stack-theoretic quotient $H_1\backslash G/H_2$, which sometimes forces one to take a sum over certain inner forms of $G$ and $H_i$. We will not discuss stack-theoretic quotients or inner forms here, but at first approximation we observe that from this algebro-geometric point of view the variety $H_i\backslash G$ is more natural than the space $H_i(k)\backslash G(k)$; hence, if $G(k)$ does not surject onto $(H_i\backslash G_i)(k)$ one should take the sum of the above expressions over stabilizers $H_{i,\epsilon}$ of a set of representatives of $G(k)$-orbits. (This will become clearer in a reformulation which we will present below.) Moreover, one can consider an idele class character $\eta$ of $H_i$ and integrate against this character; we will adjust our notation accordingly, for instance: $\RTF^G_{H_1,(H_2,\eta)}$. There are often analytic difficulties in making sense of the above integrals.
 And one does not have to restrict to reductive subgroups, but can consider parabolically induced subgroups together with a character on their unipotent radical (such as in the Whittaker period). However, we will ignore most of these issues and focus on another one, first noticed in \cite{JLR}: It seems that in certain cases, in order for the relative trace formula $\RTF^G_{H_1,H_2}$ to be comparable to some other relative trace formula, the functional (\ref{simpleRTF}) is not the correct one and one has to add a ``weight factor'' in the definition, such as:
\begin{equation}\label{thetaRTF}
\RTF_{H_1,H_2}^G(f) = \int_{H_1(k)\backslash H_1(\Ad)} \int_{H_2(k)\backslash H_2(\Ad)} K_f(h_1, h_2) \theta(h_1) dh_1 dh_2
\end{equation}
where $\theta$ is a suitable automorphic form on $H_1$.

Our goal here is to explain how, under the point of view developed in the present paper, the above expression is not a relative trace formula for $H_1, H_2$ but represents a relative trace formula for some \emph{other} subgroups. We will discuss this in the context of \cite{JLR}, though our starting point will not be (\ref{thetaRTF}) but another formula of \cite{JLR} from which the identities for (\ref{thetaRTF}) are derived, and which is closer to our point of view.

More precisely, let $E/F$ be a quadratic extension of number fields with corresponding idele class character $\eta$, $G= \Res_{E/F}\PGL_2$, $G'=\PGL_2\times \PGL_2$ (over $F$), $H\subset G$ the projectivization of the quasi-split unitary group (which is in fact split, i.e.\ isomorphic to $PGL_2$ over $F$), $H'=$ the diagonal copy of $\PGL_2$ in $G'$. (Compared to \cite{JLR}, we restrict to $\PGL_2$ for simplicity.) We consider $\eta$ as a character of $H$ in the natural way. Naively, one would like to compare the functional: $\RTF_{H,(H,\eta)}^G$ to the functional $\RTF_{H',H'}^{G'}$ (usual trace formula for $G'$). However, it turns out that the correct comparison is between the functionals:
\begin{equation}\label{1stmod}
 f\mapsto \int_{(H(k)\backslash H(\Ad))^2} K_f(h_1, h_2) E(h_1,s) \eta(h_1) dh_1 dh_2
\end{equation}
on $G$ and 
\begin{equation}\label{2ndmod}
 f'\mapsto \int_{(H'(k)\backslash H'(\Ad))^2} K_{f'}(h'_1, h'_2) E'(h'_1,s) dh'_1 dh'_2
\end{equation}
on $G'$, where $E, E'$ are suitable Eisenstein series on $H, H'$. (More precisely, in the first case one takes the sum over the unitary groups of all $G(k)$-conjugacy classes of non-degenerate hermitian forms for $E/F$, as we mentioned above, \emph{but only in the second variable}.)

Notice that we have already made a modification to the formulation of \cite{JLR}, namely in the second case they let $G'=\PGL_2$ and consider the integral: $$\int_{\PGL_2(k)\backslash \PGL_2(\Ad)} K_{f'}(x,x) E'(x,s) dx,$$ but this is easily seen to be equivalent to our present formulation.

\begin{claim}
 The functionals (\ref{1stmod}), (\ref{2ndmod}) can naturally be understood as pairings:
$$\RTF_{X_1,X_2}^{\Gm\times G, \omega}: \mathcal S(X_1(\Ad))\otimes \mathcal S(X_2(\Ad)) \to \CC$$
respectively:
$$\RTF_{X_1',X_2'}^{\Gm\times G',\omega'}: \mathcal S(X_1'(\Ad))\otimes \mathcal S(X_2'(\Ad)) \to \CC$$
where: $X_2=H\backslash G$, $X_2'=H'\backslash G'$ and $X_1, X_1'$ are the affine closures of the varieties:
$$ U_F\backslash G$$
respectively:
$$ U_F' \backslash G'$$
where $U_F, U_F'$ are maximal unipotent subgroups of $H$ resp.\ $H'$.
\end{claim}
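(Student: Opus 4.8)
The plan is to identify each weighted relative trace formula functional with the pairing of two $X$-theta series, using the theory of $\S\ref{secSchwartz}$ applied to an enlarged group $\Gm\times G$ (respectively $\Gm\times G'$) and two affine spherical varieties: one ``small'' variety $X_2 = H\backslash G$ which is affine homogeneous (since $H$ is reductive), and one ``large'' variety $X_1 = \overline{U_F\backslash G}^\aff$ which, as in $\S\ref{sspreflag}$ and $\S\ref{connES}$, is a pre-flag bundle giving rise to an Eisenstein series on $H$. The key observation is that the kernel function $K_f(x,y)$ has the form of an integral over $G(\Ad)$ that, when fed into the two period integrals, factors through the $G(k)$-orbits on $X_1^+(k)\times X_2^+(k)$ — equivalently on the fiber product, which is the quotient of $G(k)$ by the diagonal.

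First I would recall that $f\in C_c^\infty(G(\Ad))$ determines, by convolution, a function $\delta_{x_0}\star f$ in a Schwartz space on $X_2^+(\Ad)$ where $x_0\in X_2^+(k)$ is the base point with stabilizer $H$; this is exactly the construction of $\S\ref{ssperiods}$. Similarly, the Eisenstein series $E(\cdot, s)$ on $H$, together with the summation over $k$-points, is realized (via the pre-flag bundle structure and Proposition $\ref{prES}$/equation (\ref{nprES})) as the $X_1$-theta series $\theta(\Phi_1, \cdot)$ paired against $H$ for an appropriate $\Phi_1\in\mathcal S(X_1(\Ad))$; the extra $\Gm$-factor absorbs the Mellin parameter $s$ and the character $\eta$ (respectively $\omega'$) so that varying $\omega$ over $X_1$-positive characters recovers the family $E(h_1,s)\eta(h_1)$. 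Then I would write
\begin{equation*}
\int_{(H(k)\backslash H(\Ad))^2} K_f(h_1,h_2) E(h_1,s)\eta(h_1)\, dh_1\, dh_2
= \int_{(H(k)\backslash H(\Ad))^2} \sum_{\gamma\in G(k)} f(h_1^{-1}\gamma h_2)\, E(h_1,s)\eta(h_1)\, dh_1\, dh_2,
\end{equation*}
unfold the inner sum by replacing the double quotient by a single copy of $H(k)\backslash G(\Ad)/H(\Ad)$-type data, and recognize the result as the pairing $\langle \theta(\Phi_1, \cdot), \theta(\Phi_2, \cdot)\rangle$ over $[G]$ (or rather over $[\Gm\times G]$), with the two theta series summed over $X_1^+(k)$ and $X_2^+(k)$ respectively. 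The summation over $G(k)$-conjugacy classes of hermitian forms ``in the second variable'' is precisely what the theta series for $X_2 = H\backslash G$ produces, since $G(k)$ need not surject onto $X_2^+(k) = (H\backslash G)(k)$ and the $k$-points break into several $G(k)$-orbits — one for each such form, each contributing its own unitary-group period, exactly as the parenthetical remark after (\ref{2ndmod}) anticipates.

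The main obstacle, and the step requiring the most care, is matching the \emph{normalizations}: the Eisenstein series $E(h_1,s)$ and $E'(h'_1,s)$ appearing in \cite{JLR} must be shown to coincide (up to the harmless $L$-factor of equation (\ref{degES}), or up to the abelian factor that does not affect meromorphicity) with the ``geometric'' sections produced by the basic function $\Phi_1^0$ of $\S\ref{sssbasicfunction}$ and Theorem \ref{BFGMfunctions}. Here one uses that $\overline{U_F\backslash H}^\aff$ is the affine closure of $U_P\backslash H$ for $P$ a Borel of $H\cong\PGL_2$ (over $F$), so $\mathfrak{\check u}_P$ is one-dimensional and the discrepancy is a single abelian $L$-factor; one then checks that the choice of $\Gm$-action and $\omega$ reproduces the precise shift in $s$ and the twist by $\eta$. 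A secondary point to verify is convergence/meromorphic continuation of the resulting pairing, but this follows from Proposition \ref{moderategrowth} together with the cuspidality-free version of Conjecture \ref{weakconjecture} that holds for pre-flag bundles (Theorem \ref{preflagthm}) on the $X_1$-side, and from compact support on the $X_2$-side. The identification is then purely formal once these normalizations are pinned down, and the analogous argument on the $G'$-side is identical with $H'\cong\PGL_2^\diag$ and $U_F'$ its maximal unipotent.
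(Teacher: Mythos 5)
Your proposal is correct and follows essentially the same route as the paper: factor $f=\check f_1\star f_2$, push $f_2$ to a Schwartz function on $X_2 = H\backslash G$ by averaging over $H(\Ad)$ as in \S\ref{ssperiods}, realize the Eisenstein series $E(h_1,s)\eta(h_1)$ as the $\omega$-Mellin transform of a $\theta_{U_2}^{\Gm\times H}$-series built from a Schwartz function on $U_F\backslash H$ viewed as a generalized function on $U_F\backslash G$, convolve it with $f_1$ to land in $\mathcal S(X_1(\Ad))$, and observe that the resulting functional depends only on $(\Phi_1,\Phi_2)$; the summation over $G(k)$-orbits on the $X_2$-side (and its absence on the $X_1$-side, since $G(k)$ does surject onto $X_1^+(k)$) is exactly the paper's bookkeeping remark. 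The additional paragraphs you devote to matching normalizations against \cite{JLR} and to meromorphic continuation go somewhat beyond what the paper argues (it explicitly brackets analytic difficulties), but they do not change the structure of the argument.
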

 The varieties $X_1$, $X_1'$ are considered here as spherical varieties under $\Gm\times G$ (resp.\ $\Gm\times G'$), where $\Gm=B_2/U_2$, and we extend the $\Gm$-action to the varieties $X_2,X_2'$ in the trivial way. The exponent $\omega$ in $\RTF_{X_1,X_2}^{\Gm\times G, \omega}$ will be explained below.

Before we explain the claim, let us go back to the simpler formula (\ref{simpleRTF}) and explain how it can be considered as a pairing between $\mathcal S(X_1(\Ad))$ and $\mathcal S(X_2(\Ad))$ (where $X_i=H_i\backslash G_i$). Here we will identify Hecke algebras with spaces of functions, by choosing Haar measures. Assume that $f=\check f_1 \star f_2$ with $f_i\in C_c^\infty(G(\Ad))$. Then we set: $\Phi_i(g)=\int_{H_i(\Ad)} f_i(hg)dh$. By the definition of $\mathcal S(X_i(\Ad))$ when $H_i$ is reductive, it follows that $\Phi_i\in\mathcal S(X_i(\Ad))$. (It is at this point that one should add over representatives for $G_i(k)$-orbits on $X_i(k)$, since in general the map $C_c^\infty(G(\Ad))\to \mathcal S(X_i(\Ad))$ is not surjective.) The functional $\RTF^G_{H_1,H_2}(f_1 \star f_2)$ clearly does not depend on $f_1,f_2$ but only on $\Phi_1,\Phi_2$. Hence, it defines a $G^\diag$-invariant functional:
$$ \mathcal S(X_1(\Ad))\otimes \mathcal S(X_2(\Ad)) \to \CC.$$

Now let us return to the setting of the Claim, and of equations (\ref{1stmod}), (\ref{2ndmod}). The product $E(h_1,s)\eta(h_1)$ in (\ref{1stmod}) will be considered as an Eisenstein series on $H(k)\backslash H(\Ad)$. We have seen that suitable sections of Eisenstein series can be obtained from integrating $X$-theta series $\theta_{U_2}^{\Gm\times H}(\Phi,g)$ where $\Phi\in \mathcal S(U_2\backslash H(\Ad))$ against a character $\omega$ of $\Gm$. Now consider $\Phi\in\mathcal S(U_2\backslash H(\Ad))$ as a generalized function on $U_2\backslash G(\Ad)$. Assume again that $f=\check f_1\star f_2\in C_c^\infty(G(\Ad))$. Then $\Phi_1:=f_1\star \Phi \in \mathcal S(U_2\backslash H(\Ad))$ and $\Phi_2(g):=\int_{H_2(\Ad)} f(hg)dg\in \mathcal S(H\backslash G(\Ad))$. Again, of course, we must take many $f$'s and sum over representatives for orbits of $G(k)$ on $X_2(k)$ -- incidentally, our point of view explains why there is no need to sum over representatives for orbits in the first variable: because $G(k)$ surjects on 
$X_1(k)$!

Similarly, one can explain (\ref{2ndmod}) as a pairing between $\mathcal S(X_1'(\Ad))\otimes \mathcal S(X_2'(\Ad))$, and this completes the explanation of our Claim. (We have introduced the exponents $\omega, \omega'$ in the notation, because we have already integrated against the corresponding character of $\Gm$ in order to form Eisenstein series.) Hence, by viewing the Jacquet-Lai-Rallis trace formulae as being attached to the spaces $X_1,X_2$ and $X_1',X_2'$ instead of the original $H\backslash G$ and $H'\backslash G'$, the weight factors do not appear as corrections any more, but as a natural part of the setup.

Notice that this point of view is very close to the geometric interpretation of the fundamental lemma which led to its proof by Ng\^o \cite{Ngo} in the case of the Arthur-Selberg trace formula. Indeed, by the geometric methods of Ng\^o (cf.\ also \cite{GN}) one naturally gets a hold of the orbital integrals of unramified functions arising from intersection cohomology, not the ``naive'' ones defined as characteristic functions of $G(\mathfrak o_v)$-orbits. I hope that this point of view will lead to a more systematic study of the relative trace formula -- at least by alleviating the impression created by weight factors that it is something ``less canonical'' than the Arthur-Selberg trace formula.


\small\noindent{\sc{Department of Mathematics and Computer Science \\
Rutgers University \\
101 Warren Street, Smith Hall 216 \\
Newark, NJ 07102 
}}

\noindent\emph{E-mail address:} \texttt{sakellar@rutgers.edu}


\begin{thebibliography}{BFGM02}

\bibitem[BO07]{BO} Y.\ Benoist and  H.\ Oh \emph{Polar decomposition for $p$-adic symmetric spaces.} Int.\ Math.\ Res.\ Not.\  2007,  no.\ 24, Art.\ ID rnm121, 20 pp.

\bibitem[BFGM02]{BFGM} A.\ Braverman, M.\ Finkelberg, D.\ Gaitsgory and I.\ Mirkovi\'c, \emph{Intersection cohomology of Drinfeld's compactifications.}  Selecta Math.\ (N.S.)  8  (2002),  no.\ 3, 381--418.

\bibitem[BrGa02]{BG} A.\ Braverman and D.\ Gaitsgory, \emph{Geometric Eisenstein series.}  Invent.\ Math.\  150  (2002),  no.\ 2, 287--384.

\bibitem[BK98]{BK} A.\ Braverman and D.\ Kazhdan, \emph{On the Schwartz space of the basic affine space.}  Selecta Math.\ (N.S.)  5  (1999),  no.\ 1, 1--28.

\bibitem[BK00]{BKgamma} A.\ Braverman and D.\ Kazhdan, \emph{$\gamma$-functions of representations and lifting.} With an appendix by V. Vologodsky. GAFA 2000 (Tel Aviv, 1999).  Geom.\ Funct.\ Anal.\  2000,  Special Volume, Part I, 237--278.

\bibitem[BK02]{BK2} A.\ Braverman and D.\ Kazhdan, \emph{Normalized intertwining operators and nilpotent elements in the Langlands dual group.} Mosc.\ Math.\ J.\  2  (2002),  no.\ 3, 533--553.

\bibitem[BLV86]{BLV} M.\ Brion, D.\ Luna and Th.\ Vust, \emph{Espaces homog\`enes sph\'eriques.} Invent.\ Math., 84(3):617--632, 1986.

\bibitem[BoHC62]{BoHC} A.\ Borel and Harish-Chandra, \emph{Arithmetic subgroups of algebraic groups.} Ann.\ of Math.\ (2) 75, 1962, 485--535.

\bibitem[Bu05]{BuRS} D.\ Bump, \emph{The Rankin-Selberg method: an introduction and survey.} Automorphic representations, $L$-functions and applications: progress and prospects, 41--73, Ohio State Univ.\ Math.\ Res.\ Inst.\ Publ., 11, de Gruyter, Berlin, 2005. 

\bibitem[BF90]{BF} D.\ Bump and S.\ Friedberg, \emph{The exterior square automorphic $L$-functions on ${\GL}(n)$.} Festschrift in honor of Piatetski-Shapiro, Part II, 47--65, Israel Math.\ Conf.\ Proc., 3, Weizmann, Jerusalem, 1990.

\bibitem[BFG99]{BFG} D.\ Bump, S.\ Friedberg and D.\ Ginzburg, \emph{Rankin-Selberg integrals in two complex variables.} Mathematische Annalen 313 (1999), 731--761.

\bibitem[BuGi92]{BuGi} D.\ Bump and D.\ Ginzburg, \emph{Symmetric square $L$-functions on $\GL(r)$.} Ann.\ of Math.\ (2) 136 (1992), no.\ 1, 137--205.

\bibitem[Co03]{Co} J.W.\ Cogdell, \emph{Analytic theory of $L$-functions for $\GL_n$.}  An introduction to the Langlands program (Jerusalem, 2001),  197--228, Birkhäuser Boston, Boston, MA, 2003.

\bibitem[CKM04]{CKM} J.W.\ Cogdell, H.H.\ Kim, and R.M.\ Murty, \emph{Lectures on automorphic $L$-functions.} Fields Institute Monographs, 20.\ American Mathematical Society, Providence, RI, 2004.

\bibitem[DS]{DS} P.\ Delorme and V.\ S\'echerre, \emph{An analogue of the Cartan decomposition for p-adic reductive symmetric spaces.} Preprint, 2006; arxiv:math/0612545.


\bibitem[GN10]{GN} D.\ Gaitsgory and D.\ Nadler, \emph{Spherical varieties and Langlands duality.} Moscow Math.\ J.\ (special issue in honor of Pierre Deligne), Vol.\ 10, no.\ 1 (2010).

\bibitem[Ga87]{GaTr} P.\ Garrett, \emph{Decomposition of Eisenstein series: Rankin triple products.}  Ann.\ of Math.\ (2)  125  (1987),  no.\ 2, 209--235.\ 
35--101, Proc.\ Sympos.\ Pure Math., 66, Part 2, Amer. Math.\ Soc., Providence, RI, 1999.

\bibitem[Ga99]{GaEu} P.\ Garrett, \emph{Euler factorization of global integrals.} Automorphic forms, automorphic representations, and arithmetic (Fort Worth, TX, 1996),
35--101, Proc.\ Sympos.\ Pure Math., 66, Part 2, Amer.\ Math.\ Soc., Providence, RI, 1999.

\bibitem[GPSR87]{GPSR} S.\ Gelbart, I.\ Piatetski-Shapiro and S.\ Rallis, \emph{Explicit constructions of automorphic $L$-functions.} Lecture Notes in Mathematics, 1254.\ Springer-Verlag, Berlin, 1987. 

\bibitem[Gi95]{GiE6} D.\ Ginzburg, \emph{On standard $L$-functions for $E\sb 6$
   and $E\sb 7$.} J.\ Reine Angew. Math.\ 465 (1995), 101--131.

\bibitem[GH04]{GH} D.\ Ginzburg and J.\ Hundley, \emph{Multivariable
   Rankin-Selberg integrals for orthogonal groups.} Int.\ Math.\ Res.\ Not.\ 2004.

\bibitem[GJ72]{GJ} R.\ Godement and H.\ Jacquet, \emph{Zeta functions of simple algebras.} Lecture Notes in Mathematics, Vol.\ 260, Springer-Verlag, Berlin-New York, 1972.

\bibitem[Gr61]{Gr1} M.\ J.\  Greenberg, \emph{Schemata Over Local Rings.} Ann.\ of Math.\ (2), Vol.\ 73, No.\ 3. (May, 1961), 624--648.

\bibitem[Gr63]{Gr2} M.\ J.\  Greenberg, \emph{Schemata Over Local Rings: II.} Ann.\ of Math.\ (2), Vol.\ 78, No.\ 2. (Sep., 1963),  256--266.



\bibitem[HM73]{HM} G.\ Hochschild and G.D.\ Mostow, \emph{Unipotent groups in invariant theory.} Proc.\ Nat.\ Acad.\ Sci.\ U.S.A.\ 70 (1973), 646--648.

\bibitem[II10]{II} A.\ Ichino and T.\ Ikeda, \emph{On the periods of automorphic forms on special orthogonal groups and the Gross-Prasad conjecture.} Geom.\ Funct.\ Anal.\ Vol.\ 19, no.\ 5 (2010), 1378--1425.

\bibitem[Ja01]{Ja} H.\ Jacquet, \emph{Factorization of period integrals.}  J.\ Number Theory  87  (2001),  no.\ 1, 109--143.

\bibitem[JLR93]{JLR} H.\ Jacquet, K.\ F.\ Lai and S.\ Rallis, \emph{A trace formula for symmetric spaces.}  Duke Math.\ J.\  70  (1993),  no.\ 2, 305--372.

\bibitem[Kn91]{KnLV}  F.\ Knop, \emph{The Luna-Vust theory of spherical embeddings.}  Proceedings of the Hyderabad Conference on Algebraic Groups (Hyderabad, 1989),  225--249, Manoj Prakashan, Madras, 1991.\ 

\bibitem[Kn94a]{KnMotion} F.\ Knop, \emph{The asymptotic behavior of invariant collective motion.} Invent.\ Math.\ 116 (1994), no.\ 1-3, 309--328. 

\bibitem[Kn94b]{KnHC} F.\ Knop, \emph{A Harish-Chandra homomorphism for reductive group actions.}  Ann.\ of Math.\ (2)  140  (1994),  no.\ 2, 253--288.

\bibitem[Kn95a]{KnOrbits}  F.\ Knop, \emph{On the set of orbits for a Borel subgroup.}  Comment.\ Math.\ Helv.\  70  (1995),  no.\ 2, 285--309.

\bibitem[Kn95b]{KnR1} F.\ Knop, \emph{Homogeneous varieties for semisimple groups of rank one.}  Compositio Math.\  98  (1995),  no.\ 1, 77--89.

\bibitem[KS06]{KnVS} F.\ Knop and B.\ Van Steirteghem, \emph{Classification of smooth affine spherical varieties.}  Transform.\ Groups  11  (2006),  no.\ 3, 495--516.

\bibitem[Lo08]{Lo} I.\ Losev, \emph{Combinatorial invariants of algebraic Hamiltonian actions.}  Mosc.\ Math.\ J.\  8  (2008),  no.\ 3, 493--519, 616.

\bibitem[Lu73]{LuSlices} D.\ Luna, \emph{Slices \'etales.} Sur les groupes alg\'ebriques, pp. 81--105. Bull.\ Soc.\ Math.\ France, Paris, Memoire 33, 1973.

\bibitem[Lu01]{Lu} D.\ Luna, \emph{Vari\'et\`es sph\'eriques de type $A$.} Publ.\ Math.\ Inst.\ Hautes Études Sci.\  No.\ 94  (2001), 161--226.

\bibitem[LV83]{LV} D.\ Luna and T.\ Vust. \emph{Plongements d’espaces homog\`enes.} Comment.\ Math.\ Helv., 58(2):186--245, 1983.

\bibitem[PS75]{PSEuler} I.\ Piatetski-Shapiro, \emph{Euler subgroups.}  Lie groups and their representations (Proc.\ Summer School, Bolyai J\'anos Math.\ Soc., Budapest, 1971),  pp.\ 597--620.\ Halsted, New York, 1975.\ 

\bibitem[Ng\^o10]{Ngo} Ng\^o Bao Ch\^au, \emph{Le lemme fondamental pour les alg\`ebres de Lie.} Publ. Math. Inst.\ Hautes Études Sci.\ No.\ 111 (2010), 1--169.

\bibitem[PSR87]{PSRtriple} I.\ Piatetski-Shapiro and S.\ Rallis, \emph{Rankin triple $L$ functions.} Compositio Math.\ 64 (1987), no.\ 1, 31--115.

\bibitem[Pr90]{Pr} D.\ Prasad, \emph{Trilinear forms for representations of ${\operatorname GL}(2)$ and local $\epsilon$-factors.}
Compositio Math.\ 75 (1990), no.\ 1, 1--46. 

\bibitem[Ra39]{Ra} R.\ A.\ Rankin, \emph{Contributions to the theory of Ramanujan's function $\tau(n)$ and similar arithmetical functions, I and II.} Proc.\ Cambridge Philos.\ Soc.\  35,  (1939), 351--372.

\bibitem[Sa08]{SaSpc} Y.\ Sakellaridis, \emph{On the unramified spectrum of spherical varieties over $p$-adic fields.} Compositio Math.\ 144 (2008), no.\ 4, 978--1016.

\bibitem[Sa2]{SaSph} Y.\ Sakellaridis, \emph{Spherical functions on spherical varieties.} Preprint, 2009, arXiv:0905.4244.

\bibitem[SV]{SV} Y.\ Sakellaridis and A.\ Venkatesh, \emph{Periods and harmonic analysis on spherical varieties.} In preparation.

\bibitem[Se40]{Sel} A.\ Selberg, \emph{Bemerkungen \"uber eine Dirichletsche Reihe, die mit der Theorie der Modulformen nahe verbunden ist.} Arch.\ Math.\ Naturvid.\ 43, (1940), 47--50. 

\bibitem[Ta67]{Tate} J.\ T.\ Tate, \emph{Fourier analysis in number fields, and Hecke's zeta-functions.}  1967  Algebraic Number Theory (Proc.\ Instructional Conf., Brighton, 1965)  pp.\ 305--347 Thompson, Washington, D.C.

\end{thebibliography}
\end{document}